\newtheorem{theorem}{Theorem}[section]
\newtheorem{corollary}[theorem]{Corollary}
\newtheorem{claim}[theorem]{Claim}
\newtheorem{fact}[theorem]{Fact}
\newtheorem{proposition}[theorem]{Proposition}
\newtheorem{remark}[theorem]{Remark}
\newtheorem{definition}[theorem]{Definition}
\newtheorem{problem}[theorem]{Problem}
\theoremstyle{definition}
\newtheorem{example}[theorem]{Example}
\def\Ind#1#2{#1\setbox0=\hbox{$#1x$}\kern\wd0\hbox to 0pt{\hss$#1\mid$\hss}
\lower.9\ht0\hbox to 0pt{\hss$#1\smile$\hss}\kern\wd0}
\def\ind{\mathop{\mathpalette\Ind{}}}
\def\notind#1#2{#1\setbox0=\hbox{$#1x$}\kern\wd0
\hbox to 0pt{\mathchardef\nn=12854\hss$#1\nn$\kern1.4\wd0\hss}
\hbox to 0pt{\hss$#1\mid$\hss}\lower.9\ht0 \hbox to 0pt{\hss$#1\smile$\hss}\kern\wd0}
\def\nind{\mathop{\mathpalette\notind{}}}
\patchcmd{\subsection}{-.5em}{.5em}{}{}
\title{Conant-Independence and Generalized Free Amalgamation}
\author{Scott Mutchnik}
\begin{document}

\begin{abstract}
We initiate the study of a generalization of Kim-independence, \textit{Conant-independence}, based on the notion of \textit{strong Kim-dividing} of Kaplan, Ramsey and Shelah. A version of Conant-independence  was originally introduced to prove that all $\mathrm{NSOP}_{2}$ theories are $\mathrm{NSOP}_{1}$. We introduce an axiom on stationary independence relations, essentially generalizing the ``freedom" axiom in some of the \textit{free amalgamation theories} of Conant, and show that this axiom provides the correct setting for carrying out arguments of Chernikov, Kaplan and Ramsey on $\mathrm{NSOP}_{1}$ theories relative to a stationary independence relation. Generalizing Conant's results on free amalgamation to the limits of our knowledge of the $\mathrm{NSOP}_{n}$ hierarchy, we show using methods from Conant as well as our previous work that any theory where the equivalent conditions of this local variant of $\mathrm{NSOP}_{1}$ holds is either $\mathrm{NSOP}_{1}$ or $\mathrm{SOP}_{3}$ and is either simple or $\mathrm{TP}_{2}$. We observe that these theories give an interesting class of examples of theories where Conant-independence is symmetric. This includes all of Conant's examples, the small cycle-free random graphs of Shelah, and the (finite-language) $\omega$-categorical Hrushovski constructions of Evans and Wong.

We then answer Conant's question on the existence of non-modular free amalgamation theories. We show that the generic functional structures of Kruckman and Ramsey are examples of non-modular free amalgamation theories. We also show that any free amalgamation theory is $\mathrm{NSOP}_{1}$ or $\mathrm{SOP}_{3}$, while an $\mathrm{NSOP}_{1}$ free amalgamation theory is simple if and only if it is modular.

Finally, we show that every theory where Conant-independence is symmetric is $\mathrm{NSOP}_{4}$. Therefore, symmetry for Conant-independence gives the next known neostability-theoretic dividing line on the $\mathrm{NSOP}_{n}$ hierarchy beyond $\mathrm{NSOP}_{1}$. We explain the connection to some established open questions.
\end{abstract}
\maketitle

\tableofcontents

\newpage

\section{Introduction}
One of the most rapidly evolving areas of model theory is the study of potentially non-$\mathrm{NSOP}_{1}$ $\mathrm{NSOP}$ theories. Two cornerstone problems of this field include determining the status of the open regions of this part of the classification-theoretic map, and developing a theory of independence for these theories\footnote{For a somewhat different tradition in the theory of independence for potentially non-simple theories, with some overlap with the higher $\mathrm{NSOP}_{n}$ hierarchy including the modular free amalgamation theories from \cite{Co15}, see \cite{Ons06}.}. One of the main questions of the first program, first formally posed by Džamonja and Shelah \cite{DS04} but originally asked by Shelah in notes in \cite{Sh99} based on lectures given at Rutgers University in fall on 1997, was whether the class $\mathrm{NSOP}_{2}$ coincides with the class $\mathrm{NSOP}_{1}$. This was recently answered in the affirmative by the author in \cite{NSOP2}. Yet the following question from \cite{DS04}, \cite{Sh99} remains open:

\begin{problem}
Is every $\mathrm{NSOP}_{3}$ theory $\mathrm{NSOP}_{2}$ (and therefore $\mathrm{NSOP}_{1}$?).
\end{problem}

An additional open question (\cite{FD}, \cite{Che14}),  involves the interactions of the $\mathrm{NSOP}_{n}$ hierarchy with $\mathrm{NTP}_{2}$:

\begin{problem}
Is the $\mathrm{NSOP}_{n}$ hierarchy strict within $\mathrm{NTP}_{2}$ (including $\mathrm{NSOP}_{n}$ for $n \geq 3$ as well as $\mathrm{NSOP}$ itself?) That is, are the inclusions

$$\mathrm{simple} \subseteq \mathrm{NTP}_{2} \cap \mathrm{NSOP}_{3} \subseteq \ldots \subseteq \mathrm{NTP}_{2} \cap \mathrm{NSOP}_{n} \subseteq \ldots \mathrm{NTP}_{2} \cap \mathrm{NSOP}$$ 

strict?
\end{problem}

Note that Shelah (\cite{Sh90}) showed that all $\mathrm{NSOP}_{2}$ $\mathrm{NTP}_{2}$ theories are simple. Partial results on these problems include work of Evans and Wong in \cite{EW09} proving the $\omega$-categorical Hrushovski constructions introduced in \cite{Ev02} are either simple or strictly $\mathrm{NSOP}_{4}$, work of Conant in \cite{Co15} proving modular \textit{free amalgamation theories} are either simple or strictly $\mathrm{NSOP}_{4}$ $\mathrm{TP}_{2}$, work of Kaplan, Ramsey and Simon (\cite{CKR23}) shows that all binary theories are either $\mathrm{SOP}_{3}$ or $\mathrm{NSOP}_{2}$, and either $\mathrm{SOP}_{1}$ or simple (and therefore, by \cite{NSOP2}, either $\mathrm{SOP}_{3}$ or simple.) Yet none of the previous literature explicitly treats general classes of theories that approach the limits of our understanding of the $\mathrm{NSOP}_{n}$ hierarchy: potentially $\mathrm{SOP}_{3}$, but also potentially \textit{strictly} $\mathrm{NSOP}_{1}$. (However, work of Johnson and Ye, introducing \textit{curve-excluding fields} (\cite{JY23}) known to be $\mathrm{TP}_{2}$ and thus not simple but thought to be $\mathrm{NSOP}_{4}$, deserves mention; see below.) One of the goals of this paper is to show that the potentially \textit{non-modular} free amalgamation theories are such a class (and that, answering a question of Conant in \cite{Co15}, non-modular free amalgamation theories exist). More generally, it is to introduce some properties of theories, essentially generalizing the free amalgamation theories and having no known $\mathrm{NSOP}_{4}$ counterexamples, which imply the $\mathrm{NSOP}_{1} - \mathrm{SOP}_{3}$ dichotomy.

On the other hand, our understanding of \textit{independence} in the $\mathrm{NSOP}$ region beyond $\mathrm{NSOP}_{2}$ theories has remained thin to non-existent. Kaplan and Ramsey (\cite{KR17}) have successfully introduced the concept of Kim-independence, or forking independence ``at a generic scale," as the appropriate extension of forking-independence to the class $\mathrm{NSOP}_{1}$. Yet to extend Kim-independence itself past $\mathrm{NSOP}_{2}$ remains open. Stronger and often stationary abstract independence relations with no known concrete model-theoretic characterization are also abundant in the class $\mathrm{NSOP}_{4}$. The theory of purely abstract independence relations is introduced by Adler in \cite{A09}, where he outlines axioms these relations can satisfy to behave in certain ways like forking-independence in stable theories. In \cite{D19}, D'Elb\'ee proposes the problem of finding a model-theoretic definition of stronger ``free amalgamation" relations alongside Kim-independence in $\mathrm{NSOP}_{1}$ theories, such as the \textit{strong independence} in the theory $\mathrm{ACFG}$ of algebraically closed fields with a generic additive subgroup. He also notes that relations with similar properties to the ``strong independence" found in many $\mathrm{NSOP}_{1}$ theories hold in the strictly $\mathrm{NSOP}_{4}$ Henson graphs. Just as in the case of free amalgamation of generic functional structures in \cite{KR18} or generic incidence structures in \cite{CoK19}, d'Elb\'ee observes that these stronger independence relations can be used to prove the equivalence of forking and dividing for complete types in many known $\mathrm{NSOP}_{1}$ theories. Conant \cite{Co15} introduces his formulation of free amalgamation based on concepts used to study the isometry groups of Urysohn spheres in \cite{ZT12}. There, Conant gives an abstract set of axioms for independence relations generalizing those found in homogeneous structures. These include the relations given by adding no new edges in the (simple) theory of the random graph or the (strictly $\mathrm{NSOP}_{4}$) theory of the generic triangle-free random graph. Aside from the canonical coheirs introduced by the author in \cite{NSOP2} to simulate the assumption of a stationary independence relation in the proof of $\mathrm{NSOP}_{1}$ for $\mathrm{NSOP}_{2}$ theories, our understanding of this phenomenon of ``strong independence" is entirely synthetic. Yet theories exhibiting this phenomenon of ``strong independence" often come equipped with a weaker notion of independence appearing to generalize forking-independence in simple theories or Kim-independence in $\mathrm{NSOP}_{1}$ theories. We show this relation to have a purely model-theoretic characterization as forking-independence ``at a \textit{maximally} generic scale" (in other words, the result of forcing Kim's lemma onto Kim-independence). Our definition extends that of Kim-independence in $\mathrm{NSOP}_{1}$ theories. This new extension of Kim-independence is based on the concept of ``strong Kim-dividing" introduced by Kaplan, Ramsey and Shelah in \cite{KRS19} in the context of ``dual local character" in $\mathrm{NSOP}_{1}$ theories. We show that $\mathrm{NSOP}_{4}$ theories are the last class in the $\mathrm{NSOP}_{n}$ hierarchy where this notion of independence can be symmetric, providing the beginnings of a theory of independence beyond $\mathrm{NSOP}_{1}$.

An outline of the paper is as follows. In section $3$, we introduce a weak set of axioms on stationary independence relations, essentially generalizing the ``freedom" axiom in Conant's free amalgamation theories beyond the traditional homogeneous structures. It is not a true generalization of Conant's axioms, as Conant employs a non-standard definition of stationarity, yet these relations can be found in all of Conant's examples. We show that under these axioms, we can carry out arguments for $\mathrm{NSOP}_{1}$ theories from Chernikov and Ramsey \cite{CR15} and Ramsey and Kaplan \cite{KR17} relative to an independence relation. Significantly, these relative arguments apply even outside of the context of an $\mathrm{NSOP}_{1}$ theory. Relatively to an abstract stationary relations satisfying the weak axioms, we prove the equivalence of two analogues of \cite{KR17}'s characterization of $\mathrm{NSOP}_{1}$. First, a relative Kim's lemma, or representing the least class among invariant Morley sequences in the \textit{dividing order} introduced by \cite{BYC07}; and second, symmetry for relative Kim-independence. It follows that when the relative Kim-independence is symmetric, it is no longer a relative notion, but rather an absolute notion of forking-independence ``at a maximally generic scale." We call this \textit{Conant-independence}. The reason for this name is Conant's observation in \cite{Co15} (Lemma 7.6) that Morley sequences in a free amalgamation relation can only witness dividing when $A\nind^{a}_{C} B$, where $A\ind^{a}_{C} B$ is defined by $\mathrm{acl}(AC) \cap \mathrm{acl}(BC)=\mathrm{acl}(C)$. In other words, Conant shows triviality (and thus symmetry) for the relative Kim-independence in free amalgamation theories; using Conant's argument, we will end up showing that Conant-independence is trivial in free amalgamation theories. A version of Conant-independence was defined by the author in \cite{NSOP2} as a candidate for Kim-independence in $\mathrm{NSOP}_{2}$ theories, but we define it here in terms of invariant Morley sequences rather than coheir Morley sequences, as in the ``strong Kim-dividing" of \cite{KRS19}. We also show that when these equivalent ``relative $\mathrm{NSOP}_{1}$" conditions hold for a relation with our axioms, or more generally when we have symmetry for Conant-independence and enough least elements in the Kim-dividing order even without these axioms, a theory must be either $\mathrm{NSOP}_{1}$ or $\mathrm{NSOP}_{3}$. Additionally, generalizing arguments from \cite{Co15}, such a theory must be either simple or $\mathrm{TP}_{2}$. Importantly, we do not know whether there is an $\mathrm{NSOP}_{4}$ theory where either of these two symmetry or witnessing conditions fail. Our argument uses part of the proof from \cite{NSOP2} of the equivalence of $\mathrm{NSOP}_{1}$ and $\mathrm{NSOP}_{2}$. This in turn adapts to a general setting the arguments from \cite{Co15} on modular free amalgamation theories.

In section 4, we extend Conant's result in \cite{Co15} that modular free amalgamation theories must be either simple or $\mathrm{SOP}_{3}$ to all free amalgamation theories. That is, using the results of the previous section, we show that free amalgamation theories must be either $\mathrm{NSOP}_{1}$ or $\mathrm{SOP}_{3}$. Unlike the case of modular free amalgamation theories, this allows for strictly $\mathrm{NSOP}_{1}$ theories. Conant asks in \cite{Co15} whether non-modular free amalgamation theories exist, and we answer this question positively. We show that Kruckman and Ramsey's example of the generic theory of a function from \cite{KR18}, when equipped with a nonstandard free amalgamation relation that actually falls under Conant's axioms, gives an example of a non-modular free amalgamation theory. As a corollary, we get a converse to Conant's result that a simple free amalgamation theory must be modular, showing that a modular $\mathrm{NSOP}_{1}$ free amalgamation theory must be simple. In a personal communication, Conant noted to the author that Claim 1 of Theorem 7.7 of \cite{Co15} contained a minor error, corrected in \cite{CKru23}. Our proof that a free amalgamation theory must be $\mathrm{NSOP}_{1}$ or $\mathrm{SOP}_{3}$ is based on \cite{NSOP2}, where the analogous claim to Claim 7.7.1 of \cite{Co15} uses either of two arguments which differ entirely from Conant's proof in \cite{Co15} \footnote{The second is due to the participants of the Yonsei University logic seminar and can be found in \cite{Lee22} and footnote 1 of \cite{INDNSOP3}. It uses the proof of Proposition 3.14 of \cite{KR17}}. So we recover Conant's theorem that a modular free amalgamation theory must be simple or $\mathrm{SOP}_{3}$.

In section 5, we give some examples of theories with a ``relatively $\mathrm{NSOP}_{1}$" stationary independence relation with our axioms, and characterize Conant-independence in these theories. We show that the finite-language case of the $\omega$-categorical Hrushovski constructions of \cite{Ev02}, which Conant notes are not necessarily free amalgamation theories in his sense, do satisfy this more general notion of free amalgamation. Using the free-amalgamation relation, we show that Conant-independence gives us a purely model-theoretic interpretation of the \textit{$d$-independence} of \cite{Ev02}, even outside of the simple case (where $d$-independence coincides with forking-independence). We then give a similar analysis to the generic graphs without small cycles, introduced in \cite{She95} as examples of strictly $\mathrm{NSOP}_{4}$ theories. It appears that the curve-excluding fields introduced in upcoming work of Johnson, Walsberg and Ye (\cite{Joh21}) might also have a stationary independence relation with the required properties, with Conant-independence coinciding with algebraic indepenence in the sense of fields. This suggests that these fields must be either strictly $\mathrm{NSOP}_{1}$ or, taking into account the next paragraph, strictly $\mathrm{NSOP}_{4}$.

In section 6, we show that any theory where Conant-independence is symmetric must be $\mathrm{NSOP}_{4}$. The original suggestion of a special significance for $\mathrm{NSOP}_{4}$, in connection with free amalgamation, is due to Patel (\cite{Pat06}), who in unpublished work provided an argument for $\mathrm{NSOP}_{4}$ for various examples. Patel's argument was later generalized, along with work from various other authors, by Conant in \cite{Co15} (where a more complete historical background can be found.) By showing $n = 4$ is the least $n$ so that there is a strictly $\mathrm{NSOP}_{n}$ theory with symmetric Conant-indepednence, we give neostability-theoretic justification for this significance. We then pose some questions about symmetry for Conant-independence within the neostability hierarchy, highlighting some connections with established open problems on dividing lines as well as a potential characterization of $\mathrm{NSOP}_{4}$ in terms of Conant-independence, similar to Kaplan and Ramsey's characterization of $\mathrm{NSOP}_{1}$ in terms of Kim-independence.

\section{Preliminaries}
Notations are standard; $M$ will denote a model while $a, b, c, A, B, C$ will denote sets. A \textit{global type} $p(x)$ is a complete type over the sufficiently saturated model $\mathbb{M}$. For $M \prec \mathbb{M}$, a global type $p(x)$ is \textit{invariant} over $M$ if whether $\varphi(x, b)$ belongs to $p$ for $\varphi(x, y)$ a fixed formula without parameters depends only on the type of the parameter $b$ over $M$ and not on the specific realization of that type. A special subclass of types invariant over $M$ is that of those \textit{finitely satisfiable} over $M$, meaning any formula in the type is satisfied by some element of $M$. We say an infinite sequence $\{b_{i}\}_{i \in I}$, is an \textit{invariant Morley sequence} over $M$ if there is a fixed global type $p(x)$ invariant over $M$ so that $b_{i} \models p(x)|_{M\{b_{j}\}_{j < i}}$ for $i \in I$. This is also said to be an invariant Morley sequence over $M$ \textit{in} the $M$ invariant-type $p(x)$. Invariant Morley sequences over $M$ are indiscernible over $M$, and the  EM-type of an invariant Morley sequence over $M$ depends only on $p(x)$.

We recall Conant's definition of \textit{free amalgamation theories} in \cite{Co15}, and define a few other properties of relations between sets. Many of these definitions come originally from Adler (\cite{A09}) and the axiom system itself resembles that of Ziegler and Tent in \cite{ZT12}. A theory is a \textit{free amalgamation theory} if there is a ternary relation $\ind$ between two sets over another set with the following properties:

Invariance: Whether $A\ind_{C} B$ is an invariant of the type of $ABC$.

Monotonicity: If $A \ind_{C} B$ and $A_{0} \subseteq A$, $B_{0} \subseteq B$, then $A_{0} \ind_{C} B_{0}$.

Full transitivity: For any $A$, if $D \subseteq C \subseteq B$ then $A \ind_{D} B $ if and only if $A \ind_{D} C$ and $A \ind_{C} B$.

Full existence: For any $a, B$ and for $C$ algebraically closed, there is some $a' \equiv_{C} a$ with $a' \ind_{C} B$.

Stationarity: For $a, b, C$ algebraically closed with $C \subseteq a \cap b$, and for any $a' \equiv_{C} a$, if $a \ind_{C} b$ and $a' \ind_{C} b$ then $a' \equiv_{b} a$.

Freedom: For $A, B, C, D$ with $A \ind_{C} B$, if $C \cap AB \subseteq D \subseteq C$, then $A \ind_{D} B$.

Closure: For $a, b, C$ algebraically closed with $C \subseteq a \cap b$ and $a \ind_{C} b$, $ab$ is algebraically closed.

Sometimes a relation is defined only between sets over a model, rather than over an arbitrary set. We define some additional properties that we will use in this case. As Conant's definition of stationarity is nonstandard, this includes the standard formulation of stationarity, which will apply to example 3.2.1 of \cite{Co15}, the random graphs, Henson graphs and Urysohn sphere. 

Full stationarity: If $A\ind_{M} B $, $A'\ind_{M} B $, and $A \equiv_{M} A'$, then $A \equiv_{MB} A'$.

Left extension: If $A\ind_{M} B $ and $A \subseteq C$, there is some $B' \equiv_{A} B$ with  $C\ind_{M} B' $.

Right extension: If $A\ind_{M} B $ and $B \subseteq C$, there is some $A' \equiv_{B} A$ with  $A'\ind_{M} C $.

Finally, we will define one more property of a ternary relation $\ind$ defined over sets:

Finite character: $A \ind_{B} C$ holds whenever $A_{0} \ind_{B} C_{0}$ holds for all finite $A_{0} \subseteq A$, $C_{0} \subseteq C$.

We define $a \ind^{i}_{M} b$ to mean that $\mathrm{tp}(a/Mb)$ extends to an $M$-invariant global type. The relation $a \ind^{a}_{M} b$, denoting $\mathrm{acl}(aM) \cap \mathrm{acl}(bM)=M$ can be found in \cite{Co15}; it is well-known to satisfy right (and left) extension.

We review the relevant regions of the generalized stability hierarchy. The following, which we take as the definition of simplicity, is well-known:

\begin{definition}
We say $\mathrm{tp}(a/bM)$ \emph{does not divide} over $M$, denoted $a\ind^{div}_{M}b$, if there is no formula $\varphi(x, b) \in \mathrm{tp}(a/bM)$ and $M$-indiscernible sequence $\{b_{i}\}_{i \in I}$ starting with $b$ so that $\{\varphi(x, b_{i})\}_{i \in I}$ is inconsistent. A theory $T$ is simple if $\ind^{div}$ is symmetric.
\end{definition}

The properties $\mathrm{NSOP}_{1}$ and $\mathrm{NSOP}_{2}$ were introduced in \cite{DS04}:

\begin{definition}
A theory $T$ is $\mathrm{NSOP}_{1}$ if there does not exist a formula $\varphi(x, y)$ and tuples $\{b_{\eta}\}_{\eta \in 2^{<\omega}}$ so that $\{\varphi(x, b_{\sigma \upharpoonleft n})\}_{n < \omega}$ is consistent for any $\sigma \in 2^{\omega}$, but for any $\eta_{2} \unrhd \eta_{1} \smallfrown \langle 0\rangle$, $\{\varphi(x, b_{\eta_{2}}), \varphi(x, b_{\eta_{1} \smallfrown \langle 1\rangle})\}$ is inconsistent. Otherwise it is $\mathrm{SOP}_{1}$.
\end{definition}

\begin{definition}
A theory $T$ is $\mathrm{NSOP}_{2}$  if there does not exist a formula $\varphi(x, y)$ and tuples $\{b_{\eta}\}_{\eta \in 2^{<\omega}}$ so that $\{\varphi(x, b_{\sigma \upharpoonleft n})\}_{n < \omega}$ is consistent for any $\sigma \in 2^{\omega}$, but for incomparable $\eta_{1}$ and $\eta_{2}$, $\{\varphi(x, b_{\eta_{1}}), \varphi(x, b_{\eta_{2}})\}$ is inconsistent. Otherwise it is $\mathrm{SOP}_{2}$.
\end{definition}

These two classes coincide; see \cite{NSOP2}.

Justifying the ``order" terminology, the following family of classes was introduced in \cite{She95}:

\begin{definition}
Let $n \geq 3$. A theory $T$ is $\mathrm{NSOP}_{n}$ (that is, does not have the \emph{n-strong order property}) if there is no definable relation $R(x_{1}, x_{2})$ with no $n$-cycles, but with tuples $\{a_{i}\}_{i < \omega}$ with $\models R(a_{i}, a_{j})$ for $i <j$. Otherwise it is $\mathrm{SOP}_{n}$.
\end{definition}

We will only concern ourselves with $\mathrm{NSOP}_{n}$ theories for $1 \leq n \leq 4$. Finally, \cite{Sh80} introduces the following notion, whose interaction with the $\mathrm{NSOP}_{n}$ hierarchy beyond $\mathrm{NSOP}_{2}$ remains open:

\begin{definition}
A theory $T$ is $\mathrm{NTP}_{2}$ (that is, does not have the \emph{tree property of the second kind}) if there is no array $\{b_{ij}\}_{i, j < \omega}$, formula $\varphi(x, y)$ and fixed $k \geq 2$ so that there is some fixed $k$ so that, for all $i$, $\{\varphi(x, b_{ij})\}_{j < \omega}$ is $k$-inconsistent, but for any $\sigma < \omega^{\omega}$, $\{\varphi(x, b_{i\sigma(i)})\}_{i < \omega}$ is consistent.
\end{definition}

Kaplan and Ramsey (\cite{KR17}) extend the theory of forking-independence in simple theories to $\mathrm{NSOP}_{1}$ theories. We give a brief overview, mostly by way of motivation:

\begin{definition}
A formula $\varphi(x, b)$ \emph{Kim-divides} over $M$ if there is an invariant Morley sequence $\{b_{i}\}_{i < \omega}$ over $M$ starting with $b$ (said to \emph{witness} the Kim-dividing) so that $\{\varphi(x, b_{i})\}_{i < \omega}$ is inconsistent. A formula  $\varphi(x, b)$ \emph{Kim-forks} over $M$ if it implies a (finite) disjunction of formulas Kim-dividing over $M$. We write $a \ind^{K}_{M} b$, and say that $a$ is \emph{Kim-independent} from $b$ over $M$ if $\mathrm{tp}(a/Mb)$ does not include any formulas Kim-forking over $M$.
\end{definition}

For $\varphi(x, b)$ a formula with parameters, invariant Morley sequence $\{b_i\}_{i < \omega }$ over $M$, $b_0 = b$, is said to \textit{witness} Kim-dividing of $\varphi(x, b)$ (over $M$) if $\{\varphi(x, b_i)\}_{i < \omega}$ is inconsistent. Any $\mathrm{NSOP}_{1}$ theory is characterized by the following variant of Kim's lemma for simple theories, as well as by symmetry of Kim-independence. 

\begin{fact}\label{1-kimslemma}
(\cite{KR17}) Let $T$ be $\mathrm{NSOP}_{1}$. Then for any formula $\varphi(x,b)$ Kim-dividing over $M$, any invariant Morley sequence over $M$ starting with $b$ witnesses Kim-dividing of $b$ over $M$. Conversely, suppose that for any formula $\varphi(x,b)$ Kim-dividing over $M$, any invariant Morley sequence over $M$ starting with $b$ witnesses Kim-dividing of $b$ over $M$. Then $T$ is $\mathrm{NSOP}_{1}$. (The theory $T$ is even $\mathrm{NSOP}_{1}$ if we assume that for any formula $\varphi(x,b)$ Kim-dividing over $M$, any invariant Morley sequence over $M$ starting with $b$ \textit{in an $M$-finitely satisfiable type} witnesses Kim-dividing of $b$ over $M$.

It follows that Kim-forking coincides with Kim-dividing in any $\mathrm{NSOP}_{1}$ theory.
\end{fact}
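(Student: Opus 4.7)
The plan is to follow the Kaplan--Ramsey argument of \cite{KR17} closely, treating the three assertions in order. For the forward direction, assume $T$ is $\mathrm{NSOP}_{1}$ and fix $\varphi(x,b)$ Kim-dividing over $M$, witnessed by an invariant Morley sequence $\{b_{i}\}_{i < \omega}$ in some $M$-invariant global type $p \supseteq \mathrm{tp}(b/M)$. Let $\{c_{i}\}_{i < \omega}$ be any other invariant Morley sequence over $M$ starting with $b$, in an $M$-invariant global type $q$, and my goal is to show $\{\varphi(x, c_{i})\}_{i < \omega}$ is inconsistent. The strategy is to build a tree $\{d_{\eta}\}_{\eta \in 2^{<\omega}}$ of parameters whose left-successor edges mimic realizations of $p$ and whose right-successor edges mimic realizations of $q$, and to pass, via the modelling lemma of \cite{CR15}, to a strongly indiscernible subtree. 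Branches then realize invariant Morley sequences interpolating between $p$ and $q$. If $\{\varphi(x, c_{i})\}_{i < \omega}$ were consistent, realized by some $a$, tree-indiscernibility together with the $p$-side inconsistency (inherited from the fact that $p$-branches realize the EM-type of $\{b_{i}\}_{i < \omega}$) would allow us to extract a formula and a tree configuration exhibiting the $\mathrm{SOP}_{1}$ pattern on $\varphi$, contradicting the hypothesis. The main obstacle is precisely this tree construction: making precise how to ``interleave'' the two distinct $M$-invariant extensions of the common $M$-type of $b$ to produce a tree with the correct branch EM-types.

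For the converse, assume that every $M$-finitely-satisfiable invariant Morley sequence starting with $b$ witnesses Kim-dividing of every Kim-dividing $\varphi(x,b)$, and suppose for contradiction that $T$ is $\mathrm{SOP}_{1}$, witnessed by $\varphi(x,y)$ and $\{b_{\eta}\}_{\eta \in 2^{<\omega}}$. I plan to replace the given tree by a \emph{coheir tree} over some small model $M$ realizing the same $\mathrm{SOP}_{1}$ pattern --- recursively choosing each node to realize a coheir of its type over the parameters already chosen --- and then pass to a strongly indiscernible subtree via the modelling lemma. Applying the $\mathrm{SOP}_{1}$ clause with $\eta_{1} = 0^{k_{1}}$ and $\eta_{2} = 0^{k_{2}} \smallfrown \langle 1 \rangle$ for $k_{1} < k_{2}$, the sequence $\{b_{0^{k} \smallfrown \langle 1 \rangle}\}_{k < \omega}$ is pairwise $\varphi$-incompatible, and by the coheir/strong indiscernibility construction it is an $M$-finitely-satisfiable invariant Morley sequence starting with $b_{\langle 1 \rangle}$; hence $\varphi(x, b_{\langle 1 \rangle})$ Kim-divides over $M$. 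On the other hand, the branch $(b_{0^{k}})_{k \geq 1}$ is, again by construction, an $M$-finitely-satisfiable Morley sequence starting with $b_{\langle 0 \rangle}$ along which $\{\varphi(x, b_{0^{k}})\}_{k}$ is consistent. Since $b_{\langle 0 \rangle} \equiv_{M} b_{\langle 1 \rangle}$ by indiscernibility, an $M$-automorphism carries this to an $M$-finitely-satisfiable Morley sequence starting with $b_{\langle 1 \rangle}$ that fails to witness Kim-dividing of $\varphi(x, b_{\langle 1 \rangle})$, contradicting the standing hypothesis.

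For the final assertion that Kim-forking coincides with Kim-dividing in an $\mathrm{NSOP}_{1}$ theory, suppose $\varphi(x,b) \vdash \bigvee_{i < n} \psi_{i}(x, b_{i})$ with each $\psi_{i}(x, b_{i})$ Kim-dividing over $M$. Extend $(b, b_{0}, \ldots, b_{n-1})$ to an invariant Morley sequence $\{(b^{k}, b_{0}^{k}, \ldots, b_{n-1}^{k})\}_{k < \omega}$ over $M$ in some $M$-invariant global type. By the forward direction already established, each $\{\psi_{i}(x, b_{i}^{k})\}_{k < \omega}$ is inconsistent, and in fact $m_{i}$-inconsistent for some $m_{i}$ by indiscernibility. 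If $\{\varphi(x, b^{k})\}_{k}$ were consistent, realized by some $a$, then for each $k$ we would have $a \models \psi_{i_{k}}(x, b_{i_{k}}^{k})$ for some $i_{k} < n$; pigeonhole on $n$ colours yields a single $i$ and infinitely many $k$ with $i_{k} = i$, contradicting $m_{i}$-inconsistency. Hence $\varphi(x,b)$ itself Kim-divides over $M$.
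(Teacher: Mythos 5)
The paper itself gives no proof of this Fact (it is quoted from \cite{KR17}), so the right comparison is with the Kaplan--Ramsey arguments (Propositions 3.14, 3.15 and Theorem 3.16 of \cite{KR17}, plus Proposition 5.6 of \cite{CR15}), which this paper reproduces in relativized form inside Theorem \ref{1-relativensop1}. Your final paragraph (Kim-forking $=$ Kim-dividing, via a Morley sequence in the concatenated type and pigeonhole) is correct and is exactly the standard argument. Both hard directions, however, have genuine gaps. In the forward direction you yourself flag the interleaving construction as ``the main obstacle'' and never resolve it --- but that construction \emph{is} the content of Kim's lemma. The $\mathrm{SOP}_{1}$ criterion you need to feed requires pairs $(c_{i,0}, c_{i,1})$ with $c_{i,0} \equiv_{\bar{c}_{<i}} c_{i,1}$, the $0$-side consistent and the $1$-side inconsistent; a binary tree whose left edges realize $p$ and right edges realize $q$ provides no mechanism for this same-type-over-earlier-data condition, since $p|_{A}$ and $q|_{A}$ are distinct types as soon as $A \supsetneq M$. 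The actual proof builds $\omega^{<\kappa}$-indexed trees in which branches read downward are Morley in the non-witnessing type while the maximal subtrees at each node form Morley sequences in the witnessing type, stretches by compactness, and then extracts the pairs by pigeonhole along a single long branch (nodes $\lambda_{n} \smallfrown \langle 0 \rangle^{\kappa_{1}} \smallfrown \langle 1 \rangle$ and $\lambda_{n} \smallfrown \langle 0 \rangle^{\kappa_{2}} \smallfrown \langle 1 \rangle$ chosen to have the same type over what came before); this is exactly the machinery of Claims \ref{1-consistencyinconsistency} and \ref{1-consistencyinconsistency2} in the paper, and nothing in your sketch substitutes for it.

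In the converse direction, your extraction of the consistent branch $\{b_{0^{k}}\}_{k}$ and the pairwise $\varphi$-inconsistent comb $\{b_{0^{k} \smallfrown \langle 1 \rangle}\}_{k}$ from the $\mathrm{SOP}_{1}$ tree is right, but the assertion ``by the coheir/strong indiscernibility construction it is an $M$-finitely-satisfiable invariant Morley sequence'' conceals two problems. First, if the modelling language includes level predicates (as strong indiscernibility does), the comb is \emph{not} an indiscernible sequence, since the meets $0^{k_{1}}$ sit at varying levels; if you drop levels, then local basedness no longer transfers the coheir property, because finite satisfiability of $\mathrm{tp}(\text{node}/M \cup \text{earlier nodes})$ is tied to your construction order, which is invisible to the level-free tree structure. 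This tension is precisely what the pair-array extraction resolves in the real proof, and you cannot bypass it with ``coheir tree plus modelling lemma.'' Second, even granted stepwise finite satisfiability, the hypothesis you must contradict quantifies over Morley sequences in a single \emph{global} $M$-finitely satisfiable type; producing one requires the limit-type argument, and there the base model is manufactured \emph{after} the construction as a Skolem hull $M' = \mathrm{dcl}_{\mathrm{Sk}}(M\{c_{i}\}_{i \in \mathbb{Q}})$ containing the (reversed, $\mathbb{Q} \cup \{\infty\}$-indexed) array, with $c_{\infty}$ on top, so that the limit type of the $0$-side is finitely satisfiable in $M'$ and $c_{\infty,0} \equiv_{M'} c_{\infty,1}$ follows from the same-type-over-Skolem-hulls pair condition --- replacing your automorphism step, which only gives equivalence over your pre-chosen small $M$, a model over which neither the witnessing nor the non-witnessing sequence has been shown to be Morley in a single invariant type.
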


\begin{fact}\label{1-symm}
(\cite{CR15}, \cite{KR17}) The theory $T$ is $\mathrm{NSOP}_{1}$ if and only if $\ind^{K}$ is symmetric.
\end{fact}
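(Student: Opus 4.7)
The proof splits into two directions, and I would handle them separately.

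For $\mathrm{NSOP}_1 \Rightarrow$ symmetry of $\ind^{K}$, the plan is to leverage Kim's lemma (Fact~\ref{1-kimslemma}) to first extract two standard auxiliary properties of $\ind^{K}$. Kim-forking equals Kim-dividing: given $\varphi(x,b) \vdash \bigvee_{i<n} \psi_i(x,c_i)$ with each $\psi_i$ Kim-dividing over $M$, take an $M$-invariant Morley sequence in the joint type $\mathrm{tp}(bc_0\dots c_{n-1}/M)$, apply Kim's lemma to each $\psi_i$, and pigeonhole on a hypothetical common realizer of $\{\varphi(x,b_j)\}_{j<\omega}$ to see that $\varphi(x,b)$ itself Kim-divides. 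A compactness argument converts this into an extension property for $\ind^{K}$. From Kim's lemma and extension, a standard inductive construction yields the chain condition: if $a \ind^{K}_{M} b$ and $\{b_i\}$ is an $M$-invariant Morley sequence in a type $q$ with $b_0 = b$, then there exists an $M$-invariant Morley sequence $\{b_i'\}$ in $q$, indiscernible over $Ma$, with $b_0' \equiv_{Ma} b$.

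Given these tools, the symmetry argument is short. Suppose $a \ind^{K}_{M} b$ and, for contradiction, that some $\varphi(y, a) \in \mathrm{tp}(b/Ma)$ Kim-divides over $M$. Take an $M$-invariant Morley sequence $\{a_i\}_{i<\omega}$ in a global $M$-invariant type $p(x)$ with $a_0 = a$. Apply the chain condition with the roles of $a$ and $b$ swapped: this produces $\{a_i'\}_{i<\omega}$, an $M$-invariant Morley sequence in $p$, indiscernible over $Mb$, with $a_0' \equiv_{Mb} a$. Since $\varphi(b, a_0')$ holds, $Mb$-indiscernibility forces $\varphi(b, a_i')$ for every $i$, so $\{\varphi(x, a_i')\}_{i<\omega}$ is consistent, realized by $b$. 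On the other hand, $a_0' \equiv_{M} a$ makes $\varphi(y, a_0')$ Kim-dividing, so Kim's lemma forces $\{\varphi(x, a_i')\}_{i<\omega}$ to be inconsistent, a contradiction.

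For the reverse direction (symmetry $\Rightarrow \mathrm{NSOP}_1$), I would argue the contrapositive. Assume $T$ is $\mathrm{SOP}_1$ with witness tree $\{b_\eta\}_{\eta \in 2^{<\omega}}$ and formula $\varphi(x,y)$. After a Ramsey-style extraction to enforce sufficient tree-indiscernibility (for example, $s$-indiscernibility in the sense used throughout the Kaplan--Ramsey framework), exhibit $M, a, b$ with $a \ind^{K}_{M} b$ but $b \nind^{K}_{M} a$. Concretely, take $b = b_{\langle 0 \rangle}$, let $a$ realize $\{\varphi(x, b_{\sigma \upharpoonleft n})\}_{n<\omega}$ for a branch $\sigma$ passing through $\langle 0 \rangle$, and build $M$ from nodes of the tree sufficiently far to the ``left'' of $\sigma$. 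Invariant Morley sequences extracted from the subtree above $\langle 0 \rangle$ witness that $\varphi(y, a)$ Kim-divides over $M$ (so $b \nind^{K}_{M} a$), while complementary invariant Morley sequences built from the tree structure off $\sigma$ prevent any formula in $\mathrm{tp}(a/Mb)$ from Kim-dividing (so $a \ind^{K}_{M} b$). The main obstacle lies exactly in this asymmetric extraction: the $\mathrm{SOP}_1$ inconsistency condition constrains only pairs of the form $(\eta_2 \trianglerighteq \eta_1 \smallfrown \langle 0\rangle, \eta_1 \smallfrown \langle 1 \rangle)$ rather than arbitrary incomparable pairs, and it is translating this directional constraint into a genuine \emph{one-sided} failure of Kim-independence — while simultaneously certifying that the other side holds — that requires the delicate tree combinatorics at the heart of the Kaplan--Ramsey proof.
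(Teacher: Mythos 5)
Your forward direction contains a genuine circularity. The chain condition, exactly as you state it, takes the hypothesis $a \ind^{K}_{M} b$ and produces a Morley sequence in $\mathrm{tp}(b/M)$ indiscernible over $Ma$ --- the $b$-side. To produce what your contradiction needs, namely an $M$-invariant Morley sequence $\{a_i'\}$ in $\mathrm{tp}(a/M)$ indiscernible over $Mb$, you must apply the chain condition ``with roles swapped,'' whose hypothesis is $b \ind^{K}_{M} a$ --- precisely the statement being proved. Indeed, the existence of such a sequence already implies that no formula of $\mathrm{tp}(b/Ma)$ Kim-divides over $M$ (this is the Tent--Ziegler-style Fact \ref{1-tentziegler} in the relative setting), so any construction of it is tantamount to symmetry itself. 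This is why the actual proof of Kaplan--Ramsey, mirrored in this paper's relativized Theorem \ref{1-relativensop1} ($\Rightarrow$), cannot stop at the chain condition: one builds trees whose maximal subtrees at each node form Morley sequences and whose node/leaf pairs realize $\mathrm{tp}(ab/M)$, extracts by compactness and a Skolemized pigeonhole an array $\{c_{i,0}, c_{i,1}\}$ with $\{\varphi(x, c_{i,0})\}$ consistent while $\{c_{i,1}\}$ read backwards is a Morley sequence making $\{\varphi(x, c_{i,1})\}$ inconsistent, and then contradicts Kim's lemma over the Skolem hull $M'$ via the finitely satisfiable limit type of the consistent column. The chain condition is used only as a tool \emph{inside} that tree construction, not to conclude symmetry directly.

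Your reverse direction has the asymmetry oriented backwards, and the certification of the independent side is invalid. In the Chernikov--Ramsey configuration it is the \emph{node's} type over (Skolem hull $+$ realizer) that is finitely satisfiable in $M'$: with $a = c_{\infty,0}$ a limit node and $b$ a realizer of the consistent column chosen so that the column is $Mb$-indiscernible, $\mathrm{tp}(a/M'b)$ extends to an $M'$-invariant global type, and invariance alone gives consistency of $\varphi'(x, b_i)$ along \emph{any} sequence of realizations of $\mathrm{tp}(b/M')$, hence $a \ind^{K}_{M'} b$; meanwhile $\varphi(x, a) \in \mathrm{tp}(b/M'a)$ Kim-divides over $M'$, witnessed by the backwards column $\{c_{i,1}\}$, giving $b \nind^{K}_{M'} a$. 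You instead claim the realizer is Kim-independent from the node and that ``$\varphi(y, a)$'' (node variables, realizer parameter) Kim-divides; but the $\mathrm{SOP}_{1}$ inconsistency pattern constrains instances sharing the realizer variable $x$ with distinct node parameters, so it yields no inconsistency among node-variable instances with varying realizers. Finally, exhibiting ``complementary invariant Morley sequences'' that fail to witness dividing cannot establish $a \ind^{K}_{M} b$: Kim-dividing is existential over invariant Morley sequences, and since $T$ is $\mathrm{SOP}_{1}$ here, Kim's lemma is unavailable, so the only route to certify the independent side is the finite-satisfiability/invariance argument above.
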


The following preorder restricts the \textit{dividing order} of \cite{BYC07}. We are interested in the least class, when it exists. 

\begin{definition}
Fix a type $r(x)$ over a model $M$. We define the following order on global $M$-invariant extensions of $r(x)$. Let $p(x), q(x)$ be two $M$-invariant global types extending $r(x)$. Then \emph{$p(x)$ is greater than or equal to $q(x)$ in the Kim-dividing order} if for all formulas $\varphi(x, b)$ where $b \models r(x) = p(x)|_M = q(x)|_M$, if Morley sequences in $q(x)$ witness Kim-dividing of $\varphi(x, b)$ over $M$, then Morley sequences in $p(x)$ witness Kim-dividing of $\varphi(x, b)$ over $M$.

\end{definition}

We will say an $M$-invariant type $p(x)$ is \emph{least} in the Kim-dividing order if, in the Kim-dividing order, it is less than or equal to all $M$-invariant types whose restriction to $M$ is $p(x)|_M$.

\section{Generalized free amalgamation and relative $\mathrm{NSOP}_{1}$}
Throughout this section we assume unless otherwise noted a ternary relation $\ind$ between sets is defined over models and has invariance, monotonicity, full existence over models, and full stationarity. Following Definition 7.5 of \cite{Co15}, we first define special Morley sequences.

\begin{definition}\label{1-morleysequence}
Let $M \prec \mathbb{M}$. An $\ind$\emph{-Morley sequence} over $M$ is an infinite sequence $\{a_{i}\}_{i \in I}$ (for $I$ an infinite set) so that $a_{i} \ind_{M} a_{< i}$ for all $i \in I$ and $a_{i} \equiv_{M} a_{j}$ for $i, j < \omega$. If $p= \mathrm{tp}(a_i/M)$ then it is an $\ind$\emph{-Morley sequence in} $p$ over $M$.
\end{definition}

We list some basic facts on $\ind$-Morley sequences, where $\ind$ satisfies our assumptions; they follow easily from these assumptions.

\begin{fact}\label{1-morleysequencebasics}

Let $\ind$ satisfy the assumptions at the beginning of this section.

(1) For every type $p$ over a model $M$, there is a unique global extension $p^{*}$ so that $a \ind_{M} B$ for any set $B$ and $a \models p^{*}|_{MB}$. The type $p^{*}$ is invariant over $M$.

(2) Any $\ind$-Morley sequence $\{b_{i}\}_{i \in I}$ in a type $p$ over $M$ is an invariant Morley sequence in $p^{*}$ over $M$. So it is $M$-indiscernible, and its EM-type is determined by $p$.

(3) $a \ind_{M} b \Rightarrow a \ind^{a}_{M} b$ 
    
\end{fact}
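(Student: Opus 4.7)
The plan is to establish the three parts in order. For (1), given any set $B$, full existence produces some $a \models p$ with $a \ind_{M} B$, and full stationarity ensures that $\mathrm{tp}(a/MB)$ does not depend on this choice. Setting $p^{*}|_{MB}$ to be this unique extension and ranging over $B$ yields a global type; the local extensions cohere because, for $B \subseteq B'$, any $a$ with $a \ind_{M} B'$ also satisfies $a \ind_{M} B$ by monotonicity, so uniqueness shows $p^{*}|_{MB'}$ restricts to $p^{*}|_{MB}$. Uniqueness of $p^{*}$ is immediate, and $M$-invariance follows from invariance of $\ind$: membership of $\varphi(x,b)$ in $p^{*}$ depends only on $\mathrm{tp}(b/M)$.

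Part (2) follows directly from (1). If $\{b_{i}\}_{i\in I}$ is an $\ind$-Morley sequence in $p$, then $b_{i} \models p$ and $b_{i} \ind_{M} b_{<i}$ together force $b_{i} \models p^{*}|_{M b_{<i}}$ by the uniqueness in (1). So $\{b_{i}\}$ is an invariant Morley sequence in the $M$-invariant type $p^{*}$, and indiscernibility together with determination of the EM-type by $p$ are then standard.

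For (3), I would first derive a right-extension principle from the given axioms: given $A \ind_{M} B$ and any $D$, full existence yields $A'' \equiv_{M} A$ with $A'' \ind_{M} BD$; monotonicity gives $A'' \ind_{M} B$; and full stationarity with $A, A''$ both $\ind_{M} B$ and $\equiv_{M}$-equivalent yields $A'' \equiv_{MB} A$. Supposing for contradiction that $c \in \mathrm{acl}(Ma) \cap \mathrm{acl}(Mb) \setminus M$ with $\psi(x,b)$ algebraizing $c$ over $Mb$ with $k$ realizations, extension lets me build an infinite $\ind$-Morley sequence $\{a_{n}\}_{n<\omega}$ over $Mb$ in $p^{*}$ starting with $a_{0}=a$. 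Each $a_{n}\equiv_{Mb}a$ by stationarity, so there are $c_{n} \in \mathrm{acl}(Ma_{n})$ with $a_{n}c_{n}\equiv_{Mb}ac$, and each $c_{n}$ is one of the $k$ realizations of $\psi(x,b)$. Infinite pigeonhole produces some $d$ with $d=c_{n}$ for $n$ in an infinite subset $S \subseteq \omega$, and restricting, $d \in \bigcap_{n \in S}\mathrm{acl}(Ma_{n})$ along a subsequence which is still $Mb$-indiscernible. A classical algebraic-closure argument for Morley sequences then forces $d \in \mathrm{acl}(M) = M$, contradicting $d \equiv_{Mb} c \notin M$.

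The main obstacle I expect to encounter is this final classical step. The intersections $\bigcap_{n \leq N}\{x : \varphi(x, a_{n})\}$ are non-increasing finite sets that stabilize at some $N^{*}$, and shifts of the $Mb$-indiscernible sequence (extended to a dense linear order via Ramsey) fix the stable intersection setwise. The care needed is in promoting this shift-invariance to genuine $M$-algebraicity of $d$, rather than merely a finite shift-orbit; this should follow by first refining via Ramsey to an $Mbd$-indiscernible sequence and then using the Morley structure $a_{n} \ind_{M} a_{<n}$ together with full stationarity to absorb the extra parameters into $M$.
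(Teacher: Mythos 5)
Your parts (1) and (2) are exactly the ``easy exercises'' the paper has in mind, and your execution is correct: full existence plus full stationarity give the unique compatible extensions $p^{*}|_{MB}$, monotonicity gives coherence, invariance of $\ind$ gives $M$-invariance of $p^{*}$, and (2) then falls out of the uniqueness clause of (1). For (3) you take a genuinely different route from the paper's sketched proof. The paper's argument is a short stationarity computation: it picks $b' \equiv_{M} b$ with $b' \ind^{a}_{M} b$ (quoting the well-known extension property of $\ind^{a}$), uses full existence, monotonicity, full stationarity and automorphisms to arrange $\mathrm{acl}(aM) \ind_{M} bb'$, and then produces an automorphism over $M$ fixing $\mathrm{acl}(aM)$ pointwise and carrying $b$ to $b'$, whence $\mathrm{acl}(aM) \cap \mathrm{acl}(bM) = \mathrm{acl}(aM) \cap \mathrm{acl}(b'M) \subseteq \mathrm{acl}(bM) \cap \mathrm{acl}(b'M) \subseteq M$ --- no invariant types, no pigeonhole. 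You instead route through part (1): $a \ind_{M} b$ gives $a \models p^{*}|_{Mb}$, i.e.\ $a \ind^{i}_{M} b$, and you then reprove the well-known fact that $\ind^{i}$ implies $\ind^{a}$. The paper explicitly acknowledges this alternative (``which also follows from the well-known fact that $\ind^{i}$ implies $\ind^{a}$'') but does not carry it out. Your version is more self-contained, using only the machinery already built in (1); the paper's is shorter but leans on the standard extension lemma for $\ind^{a}$.

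One caution about your final step, which you rightly flag as the delicate point: shift-invariance of the stabilized intersection only makes it fixed by the shift automorphisms of the (extended) sequence, not by all of $\mathrm{Aut}(\mathbb{M}/M)$, and the re-extraction to an $Mbd$-indiscernible sequence does not by itself close this gap. The clean finish uses the $M$-invariance of $p^{*}$ directly. Writing $\psi(y, a_{n})$ for the algebraizing formulas, the sets $\bigcap_{n \leq N} \psi(\mathbb{M}, a_{n})$ stabilize at some finite $D \ni d$ by $N = N^{*}$; by $Mb$-indiscernibility and equality of the finite cardinalities, $D = \theta(\mathbb{M}, \bar{a})$ for \emph{every} block $\bar{a}$ of $N^{*}+1$ terms of the sequence, where $\theta(y, \bar{a}) := \bigwedge_{j} \psi(y, a_{i_{j}})$; in particular consecutive blocks satisfy $\forall y\, (\theta(y, \bar{a}_{0}) \leftrightarrow \theta(y, \bar{a}_{1}))$. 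Since consecutive blocks realize the Morley square over $M$ of the $(N^{*}+1)$-fold power of $p^{*}$, invariance of $p^{*}$ shows that any two realizations of that power (one realizing its restriction to a base containing the other) define the same set; comparing $\bar{a}_{0}$ and $\sigma(\bar{a}_{0})$ through a common such realization gives $\sigma(D) = D$ for every $\sigma \in \mathrm{Aut}(\mathbb{M}/M)$. As $D$ is finite and setwise $\mathrm{Aut}(\mathbb{M}/M)$-invariant, $D \subseteq \mathrm{acl}(M) = M$, contradicting $d \in D$ with $d \equiv_{Mb} c \notin M$. With this replacement for your ``absorb the parameters'' step, your proof is complete.
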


\begin{proof}
    These are easy exercises. We sketch (3), which also follows from the well-known fact that $\ind^{i}$ implies $\ind^{a}$.
    
    (3) It is well known that there is $b' \equiv_{M} b$ with $b' \ind^{a}_{M} b$; by our assumptions on $\ind$ and an automorphism, we can find such $b'$ so that $a \ind_{M} bb'$. Also by our assumptions and an automorphism, we then have $\mathrm{acl}(a) \ind_{M} bb'$ (see e.g. \cite{A09}.) Then $\mathrm{acl}(a) \cap \mathrm{acl}(b) = \mathrm{acl}(a) \cap \mathrm{acl}(b') \subseteq \mathrm{acl}(b) \cap \mathrm{acl}(b') \subseteq M$.
\end{proof}

We consider a new axiom on $\ind$, motivated by the freedom axiom from \cite{Co15} defined in section 2 and covering all of the examples from \cite{Co15}.

\begin{definition}
Let $\ind$ satisfy the assumptions at the beginning of this section. Then $\ind$ satisfies the \emph{generalized freedom axiom} if the following holds:

If $M \prec M' \prec \mathbb{M}$ and there is an $\ind$-Morley sequence over $M$ starting with $a$ and indiscernible over $M'$, then an $\ind$-Morley sequence starting with $a$ over $M'$ is also an $\ind$-Morley sequence over $M$.    
\end{definition}

(See \cite{KR19}, \cite{DKR22} for some results involving preservation of Morley sequences under change of base.)

\begin{remark}\label{1-freedomgeneralizedfreedom}
If $\ind$ additionally satisfies the freedom axiom over models, it also satisfies the generalized freedom axiom.
\end{remark}

\begin{proof}
Since any two terms of $\ind$-Morley sequences over $M$ starting with $a$ will be $\ind^{a}$-independent over $M$, the hypothesis of the generalized freedom axiom implies $M' \ind^{a}_{M} a$. The rest is just the proof of Lemma 7.6 of \cite{Co15}. By stationarity, it suffices to construct, for any infinite index set $I$, an $\ind$-Morley sequence $\{a_{i}\}_{i \in I}$ over $M'$ starting with $a$ that remains an $\ind$-Morley sequence over $M$. By Fact \ref{1-morleysequencebasics}.1, the property of being an $\ind$-Morley sequence in a fixed type over a model is type-definable, so by compactness, we can assume $I = \omega$. Suppose $a_{0}, \ldots, a_{n}$ are already constructed. Using full existence, find $a_{n+1} \equiv_{M} a$ with $a_{n+1} \ind_{M'} a_{0} \ldots a_{n} $. So $M' \cap a_{0} \ldots a_{n+1} \subseteq M \subseteq M'$. Then by the freedom axiom, additionally $a_{n+1} \ind_{M} a_{0} \ldots a_{n} $.

\end{proof}

\begin{example}\label{1-freeamalgamationtheories}
In Examples 3.2.1(i-iii) of \cite{Co15}, the random graphs, Henson graphs and model companion of the $\{0, 1, 2\}$-valued metric spaces, free amalgamation satisfies full stationarity and therefore satisfies the generalized freedom axiom.
\end{example}

\begin{example}\label{1-notfullystationary}
In the generic $(K_{n}+K_{3})$-free graphs of \cite{CSS99} (the first of which is introduced in \cite{Ko99}), it follows from the discussion in Example 3.2.2 of \cite{Co15} (namely the result of Patel \cite{Pat06} that the class of $(K_{n}+K_{3})$-free graphs is closed under free amalgamation over an algebraically closed base; since the algebraic closure is distintegrated, this free amalgamation is itself algebarically closed) that isomorphic \textit{algebraically closed} sets are elementarily equivalent. Since it is required for elementary equivalence that the sets be algebraically closed, the free amalgamation from this example only satisfies stationarity, rather than full stationarity. However, consider the fully stationary relation $A\ind_{M}B$ defined by free amalgamation of $\mathrm{acl}(AM) $ and  $\mathrm{acl}(BM) $ over $M$; we show that the generalized freedom axiom holds. Suppose the hypothesis holds, so $M' \ind^{a}_{M} a$. Consider an $\ind$-Morley sequence $\{a_{i}\}_{i < \omega}$ over $M'$ starting with $a$. Then $\{\mathrm{acl}(Ma_{i})\}_{i < \omega}$ can be seen to be $\ind^{a}$-independent over $M$, and because $\mathrm{acl}(Ma_{i})$ does not meet $M'$ except in $M$, that $\{\mathrm{acl}(M'a_{i})\}_{i < \omega}$ are in free amalgamation (given by adding no new edges) over $M'$ implies that the $\{\mathrm{acl}(Ma_{i})\}_{i < \omega}$ are in free amalgamation over $M$.
\end{example}

We consider Conant's other example from \cite{Co15}, the freely disintegrated $\omega$-categorical Hrushovski constructions, in Section 5, as part of the larger general class of $\omega$-categorical Hrushovski constructions defined in \cite{Ev02}. Conant notes that, while the freely-disintegrated $\omega$-categorical Hrushovski constructions in \cite{Ev02} are free amalgamation theories, the $\omega$-categorical Hrushovski constructions defined in \cite{Ev02} are not, in general, free amalgamation theories.

\begin{example}\label{1-strongindependence}
In the strictly $\mathrm{NSOP}_{1}$ theory $\mathrm{ACFG}$ of algebraically closed fields with a generic additive subgroup, the strong independence relation $A\ind^{\mathrm{st}}_{M}B$, introduced as part of a larger family in \cite{D18} and developed in \cite{D19}, given by $A \ind^{\mathrm{ACF}}_{M} B$ and $G(\mathrm{acl}(MAB))=G(\mathrm{acl}(MA))+G(\mathrm{acl}(MB))$, satisfies the generalized freedom axiom. Note that Kim-independence $A\ind^{K}_{M}B$ is given by the ``weak independence" $A \ind^{\mathrm{ACF}}_{M} B$ and $G(\mathrm{acl}(MA)+\mathrm{acl}(MB))=G(\mathrm{acl}(MA))+G(\mathrm{acl}(MB))$, and the hypothesis of this axiom in the $\mathrm{NSOP}_{1}$ case is just Kim-independence. It is expected that all of the other examples from the literature of ``strong independence" in $\mathrm{NSOP}_{1}$ theories listed in \cite{D18} also satisfy this axiom.
\end{example}

We wish to show that even outside of the $\mathrm{NSOP}_{1}$ context, the theory of Kim-forking from \cite{KR17} characteristic of $\mathrm{NSOP}_{1}$ theories can be developed relative to an independence relation $\ind$ satisfying the generalized freedom axiom, though when the equivalent relative versions of $\mathrm{NSOP}_{1}$ are satisfied, the relative version of Kim-independence becomes a new absolute independence relation. We first introduce the relative notion:

\begin{definition}
Let $\varphi(x, b)$ be a formula. We say $\varphi(x, b)$ \emph{$\ind$-Kim-divides} over a model $M$ if $\{\varphi(x, b_{i})\}_{i\in I}$ is inconsistent (so $k$-inconsistent for some $k$) for some (any) $\ind$-Morley sequence $\{b_{i}\}_{i \in I}$ over $M$ starting with $b$, and that it \emph{$\ind$-Kim-forks} over $M$ if it implies a (finite) disjunction of formulas $\ind$-Kim-dividing over $M$. We say $a$ is $\ind$-Kim-independent from $b$ over $M$ (written $a\ind^{K^{\ind}}_{M}b$) if $a$ does not satisfy a formula of the form $\varphi(x, b)$ $\ind$-Kim-forking over $M$.
\end{definition}

Analogously to \cite{TZ12}, we have the following fact:

\begin{fact}\label{1-tentziegler}

Let $a \ind^{K^{\ind}}_{M} b$. Then there is some $\ind$-Morley sequence $I$ over $M$ starting with $b$ that is indiscernible over $a$.

Moreover, if there is some $\ind$-Morley sequence $I$ over $M$ starting with $b$ that is indiscernible over $a$, then $\mathrm{tp}(a/Mb)$ contains no formulas $\ind$-Kim-dividing over $M$.

\end{fact}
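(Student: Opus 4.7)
The ``moreover'' part is the easy half. If $(b_i)_{i<\omega}$ is an $\ind$-Morley sequence over $M$ starting with $b$ and indiscernible over $a$, then for each $\varphi(x,b)\in\mathrm{tp}(a/Mb)$ the realization $a$ witnesses $\models\varphi(a,b_i)$ for every $i$, so $\{\varphi(x,b_i)\}_{i<\omega}$ is consistent. By Fact~\ref{1-morleysequencebasics}.(2) the EM-type of an $\ind$-Morley sequence over $M$ in $\mathrm{tp}(b/M)$ is determined by that type, so ``$\ind$-Kim-divides'' does not depend on the chosen sequence, and $\varphi(x,b)$ does not $\ind$-Kim-divide over $M$.

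For the forward direction, assume $a\ind^{K^{\ind}}_{M}b$ and let $p^{*}$ be the global $M$-invariant extension of $\mathrm{tp}(b/M)$ supplied by Fact~\ref{1-morleysequencebasics}.(1). Take a long $\ind$-Morley sequence $(b_i)_{i<\kappa}$ in $p^{*}$ with $b_0=b$, where $\kappa$ is chosen large enough to extract an indiscernible sequence by the Erd\H{o}s--Rado plus compactness trick below. Consider the partial type
$$\Sigma(x)\;=\;\bigcup_{i<\kappa}\{\,\varphi(x,b_i):\varphi(x,b)\in\mathrm{tp}(a/Mb)\,\}.$$
A finite subset of $\Sigma$ is contained in some $\{\varphi(x,b_i):\varphi\in\Delta,\,i\in F\}$ for finite $\Delta\subseteq\mathrm{tp}(a/Mb)$ and finite $F\subseteq\kappa$. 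Let $\psi(x,b):=\bigwedge_{\varphi\in\Delta}\varphi(x,b)\in\mathrm{tp}(a/Mb)$. By hypothesis $\psi$ does not $\ind$-Kim-fork over $M$, hence does not $\ind$-Kim-divide, so $\{\psi(x,b_i)\}_{i<\omega}$ is consistent; by $M$-indiscernibility of the $\ind$-Morley sequence, the same holds on $F$. Thus $\Sigma$ is consistent, realized by some $a'$ with $a'b_i\equiv_M ab$ for every $i<\kappa$.

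Now extract $(c_j)_{j<\omega}$ indiscernible over $Ma'$ whose $Ma'$-EM-type is realized by increasing tuples from $(b_i)_{i<\kappa}$. In particular $(c_j)$ has the same EM-type over $M$ as $(b_i)$; combined with $M$-invariance of $p^{*}$ this forces $c_n\models p^{*}|_{Mc_{<n}}$ for each $n$, so $(c_j)$ is itself an invariant Morley sequence in $p^{*}$ and hence an $\ind$-Morley sequence by Fact~\ref{1-morleysequencebasics}.(1)--(2). Furthermore $a'c_0\equiv_M a'b_k\equiv_M ab$ for some $k$, so an $M$-automorphism sending $a'c_0\mapsto ab$ transports $(c_j)$ to an $\ind$-Morley sequence over $M$ starting with $b$ and indiscernible over $Ma$. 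The step requiring real care is confirming that the extracted sequence is still an $\ind$-Morley sequence: this relies on uniqueness of $p^{*}$ together with $M$-invariance, and notably does \emph{not} invoke the generalized freedom axiom, which is why the lemma holds under just the weak axioms assumed at the start of the section.
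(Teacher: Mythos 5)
Your proof is correct and takes essentially the same route as the paper's: realize $\mathrm{tp}(a/Mb)$ along an $\ind$-Morley sequence by compactness (using that no formula in the type $\ind$-Kim-divides), extract an indiscernible sequence, note via Fact \ref{1-morleysequencebasics} that extraction preserves being a Morley sequence in $p^{*}$, and finish with an automorphism, while the \emph{moreover} direction is the same one-line argument from the determined EM-type. The only cosmetic differences are that you extract over $a'$ (using Erd\H{o}s--Rado on a long sequence) before applying a single automorphism at the end, whereas the paper first moves $a'$ to $a$ and then extracts with Ramsey plus compactness; your explicit verification that the extracted sequence is still $\ind$-Morley is exactly the step the paper leaves implicit.
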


\begin{proof}
    This is standard. Choose any $\ind$-Morley sequence $I'=\{b'_i\}_{i<\omega}$ over $M$ starting with $b$. Then by compactness there is some $a'$ so that $a'b'_i \equiv_M ab$ for $i < \omega$. By an automorphism, we can find $I'' =\{b''_i\}_{i < \omega}$ so that $ab''_i \equiv_M ab$ for $i < \omega$, and by Ramsey's theorem and compactness, we can assume $I''$ to be indiscernible over $Ma$. Then by an automorphism, we can additionally get $b''_0 = b$, yielding the desired $I$.

For the second clause, clearly, for every formula $\varphi(x, b) \in \mathrm{tp}(a/Mb)$, $I$ as in the hypothesis does not witness $\ind$-Kim-dividing over $M$. By Fact \ref{1-morleysequencebasics}.2, no $\ind$-Morley sequence over $M$ in $\mathrm{tp}(b/M)$ witnesses Kim-dividing of $\varphi(x, b)$ over $M$.
\end{proof}

A feature of stationarity is that we automatically get ``Kim's lemma" (the analogue of Fact 2.1) for the class of $\ind$-Morley sequences taken \textit{alone}, giving us equivalence of $\ind$-Kim-forking and $\ind$-Kim-dividing with no further assumptions.

\begin{proposition}\label{1-relativeforkingdividing}
For formulas, $\ind$-Kim-forking coincides with $\ind$-Kim-dividing. Moreover, $\ind^{K^{\ind}}$ satisfies right extension.
\end{proposition}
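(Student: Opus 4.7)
The plan is to exploit the fact that stationarity of $\ind$ forces all $\ind$-Morley sequences in a given type $p$ over $M$ to share a single EM-type over $M$. Indeed, by Fact \ref{1-morleysequencebasics}(2) every such sequence is an invariant Morley sequence in the canonical $M$-invariant global extension $p^{*}$ from Fact \ref{1-morleysequencebasics}(1), and invariant Morley sequences in a fixed invariant type have a single EM-type. This is precisely what legitimizes the ``some (any)'' wording in the definition of $\ind$-Kim-dividing, so a Kim's-lemma-style equivalence for $\ind$-Morley sequences comes for free, and I may use whichever $\ind$-Morley sequence is convenient.

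With that in hand, I would prove the equivalence of $\ind$-Kim-forking and $\ind$-Kim-dividing by the standard pigeonhole. Assume $\varphi(x,b) \vdash \bigvee_{i=1}^{n} \psi_{i}(x, c_{i})$, where each $\psi_{i}(x, c_{i})$ is $k_{i}$-inconsistent along any $\ind$-Morley sequence starting with $c_{i}$. I would then build a single $\ind$-Morley sequence $\{(b_{j}, c_{1,j}, \ldots, c_{n,j})\}_{j < \omega}$ over $M$ starting with $(b, c_{1}, \ldots, c_{n})$, taken in the $\ind$-free extension of $\mathrm{tp}(bc_{1}\ldots c_{n}/M)$ produced by Fact \ref{1-morleysequencebasics}(1). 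By monotonicity, the projection onto any $c_{i}$-coordinate is itself an $\ind$-Morley sequence in $\mathrm{tp}(c_{i}/M)$, so $\{\psi_{i}(x, c_{i,j})\}_{j < \omega}$ is $k_{i}$-inconsistent. If now some $a^{*}$ realized $\{\varphi(x, b_{j})\}_{j < \omega}$, then for each $j$ some index $i(j)$ would give $\models \psi_{i(j)}(a^{*}, c_{i(j), j})$; pigeonhole on $\{1, \ldots, n\}$ produces a single $i^{*}$ used for an infinite subsequence $\{j_{k}\}$, and $\{c_{i^{*}, j_{k}}\}_{k}$ is again an $\ind$-Morley sequence in $\mathrm{tp}(c_{i^{*}}/M)$ by monotonicity, contradicting the $k_{i^{*}}$-inconsistency. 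Hence the sequence starting with $b$ witnesses $\ind$-Kim-dividing of $\varphi(x, b)$.

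Right extension then falls out by a routine compactness argument from this equivalence. Given $a \ind^{K^{\ind}}_{M} b$ and $b \subseteq c$, I would consider the partial type
\[
\Pi(x) := \mathrm{tp}(a/Mb) \cup \{\neg \chi(x, c') : \chi(x, c') \text{ is over } Mc \text{ and } \ind\text{-Kim-divides over } M\}.
\]
If $\Pi(x)$ were inconsistent, compactness would produce a single $\theta(x, b_{0}) \in \mathrm{tp}(a/Mb)$ with $\theta(x, b_{0}) \vdash \bigvee_{i=1}^{m} \chi_{i}(x, c_{i}')$ for finitely many $\ind$-Kim-dividing formulas $\chi_{i}$; so $\theta(x, b_{0})$ $\ind$-Kim-forks, hence $\ind$-Kim-divides over $M$ by the first part, contradicting $a \ind^{K^{\ind}}_{M} b$. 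Any realization $a'$ of $\Pi(x)$ then witnesses $a' \equiv_{Mb} a$ and $a' \ind^{K^{\ind}}_{M} c$.

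Neither step is a genuine obstacle; the one place where care is required is in the projection-and-subsequence step of paragraph two, where I must check that coordinate-wise projections and infinite subsequences of an $\ind$-Morley sequence are again $\ind$-Morley sequences so that the pigeonholed subsequence legitimately contradicts $\ind$-Kim-dividing of $\psi_{i^{*}}$. Both facts are immediate from invariance and monotonicity, and the entire proposition reduces to the stationarity-driven observation of paragraph one.
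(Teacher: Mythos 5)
Your proof is correct and is essentially the paper's own argument: the same stationarity-based observation that all $\ind$-Morley sequences in a type share one EM-type (giving Kim's lemma for $\ind$-Morley sequences ``for free''), the same pigeonhole along a single $\ind$-Morley sequence for forking $=$ dividing, and the same compactness argument for right extension. The only cosmetic difference is that you absorb the parameters $c_{1}, \ldots, c_{n}$ by running the canonical $\ind$-Morley sequence on the concatenated tuple $(b, c_{1}, \ldots, c_{n})$ and projecting coordinatewise via monotonicity, whereas the paper pads parameters so that $c_{i} = b$ using left extension; these are interchangeable bookkeeping devices.
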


\begin{proof}
The following is standard; see \cite{KR17} for the application of this method to Kim-independence in $\mathrm{NSOP}_{1}$ theories. Let $\models \varphi(x, b)\rightarrow \bigvee_{i = 1}^{n} \psi_{i}(x, c_{i})$ for $\psi_{i}(x, c_{i})$ $\ind$-Kim-dividing over $M$. By left extension (which follows from the assumptions) and monotonicity for $\ind$, whether or not a formula $\ind$-Kim-divides over $M$ does not change when adding unused parameters, so we can assume $c_{i} = b$ for $1 \leq i \leq n$. Then $\varphi(x, b)$ Kim-divides over $M$, for suppose otherwise. Let $\{b_{i}\}_{i < \omega}$ be an $\ind$-Morley sequence over $M$ starting with $b$; then there will be some $a$ realizing $\{\varphi(x, b_{i})\}_{i < \omega}$. So by the pigeonhole principle, there will be some $1 \leq j \leq n$ so that $a$ realizes $\{\psi_{j}(x, b_{i})\}_{i \in S}$ for $S \subseteq \omega$ infinite. But by monotonicity and an automorphism, we can assume $\{b_{i}\}_{i \in S}$ is an $\ind$-Morley sequence over $M$ starting with $b$, contradicting $\ind$-Kim-dividing of $\psi_{j}(x, b)$.

Right extension is also standard: if $a \ind^{K^{\ind}}_{M} b$ and there is no $a' \equiv_{Mb} a$ with $a' \ind^{K^{\ind}}_{M} bc$, then by compactness, some formula $\varphi(x, b) \in \mathrm{tp}(a/Mb)$ implies a disjunction of $Mbc$-formulas $\ind$-Kim-forking over $M$, so $\varphi(x, b)$ itself $\ind$-Kim-forks over $M$, contradicting $a \ind^{K^{\ind}}_{M} b$.
\end{proof}

Next, we introduce one possible formulation of $\mathrm{NSOP}_{1}$ relative to $\ind$ (see Fact \ref{1-kimslemma}).

\begin{definition}\label{1-relativekimslemma}
The relation $\ind$ satisfies the \emph{relative Kim's lemma} if for any model $M$ and type $p$ over $M$, and for $p^{*}$ the unique global $M$-invariant type so that $a \ind_{M} B$ for $a \models p^{*}|_{MB}$ (Fact \ref{1-morleysequencebasics}), $p^{*}$ is least in the Kim-dividing order among $M$-invariant extensions of $M$.
\end{definition}

Aside from the motivation by $\mathrm{NSOP}_{1}$ theories (as well as similarity to a property of the canonical coheirs of \cite{NSOP2}), this is a natural assumption. Strictly $\mathrm{NSOP}_{4}$ theories are often defined as the generic examples of structures avoiding a particular configuration, such as the Henson graph avoiding $K_{n}$ or the $\omega$-categorical Hrushovski constructions avoiding finite substructures of negative predimension. Free amalgamation-like relations in these examples will have the least amount of obstructions to consistency along an invariant Morley sequence, which is to say, obstructions (say, edges or relations) to the avoidance of a forbidden configuration. Using the generalized freedom axiom, arguments from \cite{CR15}, \cite{KR17} can be carried out here, showing the equivalence of this assumption to symmetry of the relative Kim-independence (see Facts \ref{1-kimslemma} and \ref{1-symm}).

\begin{theorem}\label{1-relativensop1}
Suppose $\ind$ satisfies the generalized freedom axiom. Then $\ind$ satisfies the relative Kim's lemma if and only if $\ind^{K^{\ind}}$ is symmetric.
\end{theorem}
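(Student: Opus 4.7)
The plan is to prove both directions by adapting the symmetry and Kim's lemma arguments of Chernikov--Ramsey \cite{CR15} and Kaplan--Ramsey \cite{KR17} from the $\mathrm{NSOP}_{1}$ setting to the relative setting, using the generalized freedom axiom as a substitute for the chain condition for Kim-independence that, in $\mathrm{NSOP}_{1}$, requires Kim's lemma itself.

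For the forward direction (relative Kim's lemma $\Rightarrow$ symmetry of $\ind^{K^{\ind}}$), suppose $a \ind^{K^{\ind}}_{M} b$ and $\psi(y, a) \in \mathrm{tp}(b/Ma)$; the goal is to show that $\psi(y, a)$ does not $\ind$-Kim-divide over $M$. Using Fact \ref{1-tentziegler}, I would first extract an $\ind$-Morley sequence $\{b_{i}\}_{i<\omega}$ over $M$ with $b_{0} = b$ indiscernible over $Ma$, so $b_{i} \models \psi(y, a)$ for all $i$. The central task is then to construct an $\ind$-Morley sequence $\{a_{j}\}_{j<\omega}$ over $M$ starting with $a$ and indiscernible over $M\{b_{i}\}_{i}$: given such a sequence, $b_{0} \models \psi(y, a_{j})$ for all $j$, so $\{\psi(y, a_{j})\}_{j}$ is consistent, which by the EM-type uniqueness in Fact \ref{1-morleysequencebasics} means $\psi(y, a)$ does not $\ind$-Kim-divide. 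To build $\{a_{j}\}$, I would pass to a small model $M^{*} \succ M$ containing $\{b_{i}\}_{i}$, take $\{a_{j}\}$ to be an $\ind$-Morley sequence over $M^{*}$ starting with $a$ (hence automatically $M^{*}$-indiscernible), and apply the generalized freedom axiom to descend this to an $\ind$-Morley sequence over $M$. Verifying the hypothesis of the generalized freedom axiom --- that some $\ind$-Morley sequence over $M$ starting with $a$ is already $M^{*}$-indiscernible --- is where the relative Kim's lemma enters, allowing one to replace arbitrary invariant Morley witnesses for Kim-dividing with $\ind$-Morley witnesses and thereby perform the needed Ramsey extraction.

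For the backward direction (symmetry $\Rightarrow$ relative Kim's lemma), I would argue contrapositively. Suppose the relative Kim's lemma fails: there is a formula $\varphi(x,b)$ and an $M$-invariant Morley sequence $\{c_{j}\}_{j}$ in $\mathrm{tp}(b/M)$ with $\{\varphi(x, c_{j})\}_{j}$ inconsistent, while $\{\varphi(x, b_{i})\}_{i}$ is consistent for the $\ind$-Morley sequence $\{b_{i}\}_{i}$ starting with $b$. Let $a \models \{\varphi(x, b_{i})\}_{i}$. By Ramsey-extraction one may assume $\{b_{i}\}_{i}$ is indiscernible over $Ma$ with $b_{0}=b$; then Fact \ref{1-tentziegler}(2) gives $a \ind^{K^{\ind}}_{M} b$. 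To contradict symmetry, I would construct a formula $\theta(y, a) \in \mathrm{tp}(b/Ma)$ which $\ind$-Kim-divides over $M$, using the inconsistency of $\{\varphi(x, c_{j})\}_{j}$ transported via the generalized freedom axiom into a configuration involving an $\ind$-Morley sequence in $\mathrm{tp}(a/M)$. The main obstacle in both directions is the absence of an outright chain condition for $\ind^{K^{\ind}}$; in the $\mathrm{NSOP}_{1}$ case such a condition, once Kim's lemma is available, directly produces mutually indiscernible arrays, but here the generalized freedom axiom is the weaker substitute that enables controlled base changes between $M$ and models over it while preserving the $\ind$-Morley structure, and the delicacy of this substitution is the most subtle point of the argument.
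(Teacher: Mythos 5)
Both directions of your proposal have genuine gaps. The forward direction is circular: to apply the generalized freedom axiom to the base change $M \prec M^{*}$ with $\{b_{i}\}_{i} \subseteq M^{*}$, you must first exhibit an $\ind$-Morley sequence over $M$ starting with $a$ that is indiscernible over $M^{*}$. But by the second clause of Fact \ref{1-tentziegler} (together with Proposition \ref{1-relativeforkingdividing}), the existence of such a sequence already implies that $\mathrm{tp}(M^{*}/Ma)$, and in particular $\mathrm{tp}(b/Ma)$, contains no $\ind$-Kim-dividing formula, i.e.\ $b \ind^{K^{\ind}}_{M} a$ --- exactly the conclusion you are trying to prove. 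The relative Kim's lemma does not supply this hypothesis: what it yields is the chain condition (Claim \ref{1-cc}), which produces independence only in the direction $a \ind^{K^{\ind}}_{M} I$, never an $\ind$-Morley sequence on the $a$-side indiscernible over the $b$-side; flipping sides is precisely symmetry. The paper's forward direction is instead by contradiction: assuming $b \nind^{K^{\ind}}_{M} a$, it builds a tree in the style of Proposition 5.13 of \cite{KR17} (using right extension for $\ind^{K^{\ind}}$ and the chain condition), extracts in a Skolemization an array $\{c_{i,0}, c_{i,1}\}$ with $\{\varphi(x, c_{i,0})\}$ consistent and the $c_{i,1}$ forming (read backwards) an $\ind$-Morley sequence, and then contradicts the relative Kim's lemma \emph{at the Skolem hull} $M'$: the limit type of the $c_{i,0}$ is finitely satisfiable in $M'$ and fails to witness Kim-dividing of $\varphi(x, c_{\infty,0})$, while by the generalized freedom axiom an $\ind$-Morley sequence over $M'$ remains $\ind$-Morley over $M$ and so witnesses Kim-dividing of $\varphi(x, c_{\infty,1})$, with $c_{\infty,0} \equiv_{M'} c_{\infty,1}$.

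In the backward direction you have unpacked the failure of the relative Kim's lemma with the two sequences interchanged. Leastness of $p^{*}$ in the Kim-dividing order (Definition \ref{1-relativekimslemma}) means: if $\ind$-Morley sequences witness Kim-dividing of a formula, then so does every invariant Morley sequence in an $M$-invariant extension of the same restriction. Its failure therefore yields a formula $\varphi(x,c)$ that \emph{does} $\ind$-Kim-divide over $M$ while some invariant $q$-Morley sequence $\{c_{j}\}$ has $\{\varphi(x, c_{j})\}_{j}$ \emph{consistent} --- the opposite of your configuration, in which the invariant sequence is inconsistent and the $\ind$-Morley sequence consistent. Your configuration is fully compatible with the relative Kim's lemma holding (it occurs, for instance, in the triangle-free random graph with $\ind$ free amalgamation, as in the paper's Section 3 example), so no failure of symmetry can be extracted from it. Even after correcting the direction, your sketch omits the actual mechanism: the tree of Claim \ref{1-consistencyinconsistency2}, whose branches read backwards are Morley in $q$ and whose subtrees are $\ind$-Morley, the Skolemized pigeonhole extraction, and the passage to $M' = \mathrm{dcl}_{\mathrm{Sk}}(M\{c_{i,0}, c_{i,1}\}_{i \in \mathbb{Q}})$, over which the asymmetric pair actually lives: $\mathrm{tp}(a/M'b)$ is finitely satisfiable in $M'$ (hence $a \ind^{K^{\ind}}_{M'} b$), while $\varphi(x,a)$ $\ind$-Kim-divides over $M'$ via the generalized freedom axiom applied to $\ind$-Morley sequences in $\mathrm{tp}(c_{\infty,1}/M')$. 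There is no reason to expect asymmetry over the original base $M$, as your plan of producing $\theta(y,a) \in \mathrm{tp}(b/Ma)$ requires.
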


\begin{proof}

We follow the proofs of Theorems 3.16 and 5.16 of \cite{KR17}, taking note of where the generalized freedom axiom applies in each direction; note that because the $\ind$-Morley sequences will go in the opposite direction of the configurations originally found in the proofs of the results on $\mathrm{NSOP}_{1}$ theories, we will require densely ordered indiscernible sequences. We will also need to make some modifications to respect the Skolemization.

($\Rightarrow$) Suppose $\ind$ also satisfies the relative Kim's lemma. Then we have the following chain condition: 
\begin{claim}\label{1-cc}
(Chain Condition) Let $a \ind^{K^{\ind}}_{M} b$. Then there is some $\ind$-Morley sequence $I = \{b_{i}\}_{i < \omega}$ over $M$ indiscernible over $Ma$ starting with $b$ so that $a \ind^{K^{*}}_{M} I$.
\end{claim}

\begin{proof}
The proof follows Proposition 5.3 of \cite{NSOP2}, itself similar to the standard proof of the chain condition found in, say, \cite{KR17}. Let $I=\{b_{i}\}_{i < \omega }$ be as in Fact \ref{1-tentziegler}; we show that $a \ind^{K^{*}}_{M} I$. Let $\varphi(x, I) \in \mathrm{tp}(a/MI)$; this can be assumed to have parameters in $b_{0}\ldots b_{k}$ for $k < \omega$. Because $I$ is an invariant Morley sequence over $M$, $\{b_{ik}b_{ik+1} \ldots b_{ik+(k-1)} \}_{i \in \omega}$ is also an invariant Morley sequence over $M$, say in the $M$-invariant type $q$ extending $p = \mathrm{tp}(b_{0}\ldots b_{k}/M) $. Invariant Morley sequences in $q$ over $M$, one of which is $\{b_{ik}b_{ik+1} \ldots b_{ik+(k-1)} \}_{i \in \omega}$ itself, do not witness dividing of $\varphi(x, I)= \varphi(x, b_{0} \ldots b_{k})$ over $M$. Therefore, neither do invariant Morley sequences over $M$ in $p^{*}$, where $p^{*}$ is as in Definition \ref{1-relativekimslemma}, since $p^{*}$ is less than or equal to $q$ in the Kim-dividing order. But invariant Morley sequences in $M$ over $p^{*}$ are just $\ind$-Morley sequences in $\mathrm{tp}(b_{0}, \ldots, b_{k}/M)$, so $\varphi(x, I)$ does not $\ind$-Kim divide over $M$, and by Proposition \ref{1-relativeforkingdividing}, does not $\ind$-Kim-fork over $M$. Since $\varphi(x, I) \in \mathrm{tp}(a/MI)$ was arbitrary, $a \ind^{K^{*}}_{M} I$.
\end{proof}

Now suppose for contradiction that $a \ind^{K^{\ind}}_{M} b$ but $b$ is $\ind$-Kim-dependent on $a$ over $M$. By Proposition \ref{1-relativeforkingdividing}, let $\varphi(x, a) \in \mathrm{tp}(b/Ma)$ $\ind$-Kim-divide over $M$, and choose a Skolemization of $T$.

\begin{claim}\label{1-consistencyinconsistency}
There is a sequence $\{c_{i, 0}, c_{i, 1}\}_{i < \omega}$ with $c_{i,j} \equiv_{M} a$, with $c_{i, 0} \equiv_{\mathrm{dcl}_{\mathrm{Sk}}(Mc_{<i, 0}, c_{<i, 1})} c_{i, 1}$, and a formula $\varphi(x, y)$ with $\{\varphi(x, c_{i, 0})\}_{i < \omega}$ consistent, but $\{c_{i, 1}\}_{i < \omega}$, read backwards, an $\ind$-Morley sequence over $M$; therefore, $\{\varphi(x, c_{i, 1})\}_{i < \omega}$ will be inconsistent. 
\end{claim}

\begin{proof}
This configuration is obtained in a similar way to that from the proof of Proposition 5.13 of \cite{KR17}, but with two differences. First, as in \cite{NSOP2}, the branches must form \textit{special} Morley sequences--in this case $\ind$-Morley sequences instead of canonical Morley sequences--rather than any invariant Morley sequence. Second, the sequence $\{c_{i, 0}, c_{i, 1}\}_{i < \omega}$ is extracted in the Skolemization. Because the construction is similar to \cite{KR17}, we only sketch the proof. The idea is to first construct, for all $n < \omega$, tree-indexed sets $\{a_{\eta}\}_{\eta \in \omega^{< n }}$, with additional leaf nodes $\{a_{\eta}\}_{\eta \in \omega^{n}}$, at the top of each branch, satisfying the following properties:

(1) Let $\eta \in \omega^{< n}$, $\eta' \in \omega^{n}$, $\eta \lhd \eta'$ be a non-leaf node, and a leaf on its branch. Then $a_{\eta}a_{\eta'} \equiv_{M} ab$.

(2) The maximal subtrees at a given node $\eta \in \omega^{< n}$ form an $\ind$-Morley sequence over $M$: for $k < \omega$, $a_{\unrhd \eta \smallfrown \langle k \rangle} \equiv_{M} a_{\unrhd \eta \smallfrown \langle 0 \rangle}$ and $a_{\unrhd \eta \smallfrown \langle k \rangle} \ind_{M} a_{\unrhd \eta \smallfrown \langle n-1 \rangle} \ldots a_{\unrhd \eta \smallfrown \langle 0 \rangle}$.

(3) Each node $\eta$ is $\ind$-Kim independent from all greater nodes: $a_{\eta} \ind_{M}^{K^{\ind}} a_{\rhd \eta }$

This is by induction. We start with $\{b\}$ as our initial stage of the tree. Suppose at a given stage, the tree $I= \{a_{\eta}\}_{\eta \in \omega^{\leq n }}$ is already built. We 
will find the following:

(a) some $a'_{\emptyset} \ind^{K^{\ind}}_{M} I$ so that the type of $a'_{\emptyset}$ with each leaf node over $M$ is the same as that of $ab$ over $M$, and

(b) some $\ind$-Morley sequence $\{I^i\}_{i < \omega}$ with $I^{0} = I$ and $a'_{\emptyset} \ind^{K^{*}} \{I^i\}_{i < \omega}$

Then by reindexing, we get the next stage, $ \{a_{\eta}\}_{\eta \in \omega^{\leq  n+1 }}$.

For the base case where $I = \{b\}$ we get (a) by the assumption that $a \ind_{M}^{K^{\ind}} b$, choosing $a = a_{\emptyset}$. For the inductive step, note that the root node of $I$ is $\ind$-Kim-independent from the rest of the tree by (3) and satisfies, with each leaf node, the type of $ab$ over $M$ by (1). So we get $a'_{\emptyset}$ by right extension for $\ind^{K^{\ind}}$, giving us (a). Next, to get (b), use the chain condition to choose an $\ind$-Morley sequence $\{I^{i}\}_{i < \omega}$ starting with $I$ and indiscernible over $Ma'_{\emptyset}$ so that $a'_{\emptyset} \ind^{K^{\ind}}_{M}\{I^{i}\}_{i < \kappa}$. Then $\{I^{i}\}_{i < \omega}$ is as desired.

Notice that the trees $\{a_{\eta}\}_{\eta \in \omega^{< n }}$ satisfy the following properties:

(1$'$) For $\sigma \in \omega^{\leq n}$, $\{\varphi(x, a_{\sigma|_{n}})\}_{i < n}$ is consistent (because $a_{\sigma} \models \{\varphi(x, a_{\sigma|_{n}})\}_{i < n}$, by (1))

(2$'$) Let $\eta_{1} <_{\mathrm{lex}} \ldots <_{\mathrm{lex}} \eta_{k} \in \omega^{< n}$ with $\eta_{i+1} \wedge \ldots \wedge \eta_{1} \lhd \eta_{i} \wedge \ldots \wedge \eta_{1}$ for $1 \leq i < k$ (i.e. a \textit{descending comb} in the sense of Proposition 2.51, item IIIb, \cite{sim12}). Then $\{a_{\eta_{k}}\}_{i \leq k}$ begin an $\ind$-Morley sequence over $M$ in $\mathrm{tp}(a/M)$ (by (2)); that is, $a_{\eta_{i}} \equiv_{M} a$ and $a_{\eta_i} \ind_{M} a_{\eta_{i -1}} \ldots a_{\eta_{1}}$ for $1 \leq i < k$.

Now both of these conditions are type-definable, the second by Fact \ref{1-morleysequencebasics}.2. So by compactness, we can find a tree $\{c_{\eta}\}_{\eta \in \omega^<\kappa}$ for large $\kappa$ satisfying both of these conditions: (1$'$) for $\sigma \in \omega^{\leq \kappa}$, $\{\varphi(x, c_{\sigma|_{\lambda}})\}_{\lambda < \kappa}$ is consistent, and (2$'$) for $\eta_{1} <_{\mathrm{lex}} \ldots <_{\mathrm{lex}} \eta_{k} \in \omega^{< \kappa}$ with $\eta_{i+1} \wedge \ldots \wedge \eta_{1} \lhd \eta_{i} \wedge \ldots \wedge \eta_{1}$ for $1 \leq i < k$, $\{c_{\eta_{k}}\}_{i \leq k}$ begins an $\ind$-Morley sequence over $M$ in $\mathrm{tp}(a/M)$.

Now choose a Skolemization of $T$; we use the argument of Proposition 5.6 of \cite{CR15}. (This differs slightly from the approach used in sections 5.1-5.2 of \cite{KR17} involving indiscernible trees, which we could also have used.) Suppose the $c_{i, 0}=c_{\lambda_{i}}$, indexed by nodes $\lambda_{i}$ and $c_{i, 1}=c_{\eta_{i}}$ indexed by nodes $\eta_{i}$ with $\eta_{j} \wedge \lambda_{j} \rhd \lambda_{i}$ for $1 \leq i < j \leq n$ and $\lambda_{i} \unrhd  (\eta_{i} \wedge \lambda_{i})\smallfrown \langle 0 \rangle, \eta_{i} \unrhd  (\eta_{i} \wedge \lambda_{i})\smallfrown \langle 1 \rangle $, are already constructed. Then using the pigeonhole principle, choose nodes $\lambda_{n+1} =\lambda_{n}\smallfrown \langle 0 \rangle^{\kappa_{1}} \smallfrown \langle 1\rangle$, $\eta_{n+1} = \lambda_{n}\smallfrown \langle 0 \rangle^{\kappa_{2}} \smallfrown \langle 1\rangle$ for $\kappa_{1} < \kappa_{2} < \kappa$ so that the corresponding terms of the tree, which we then call $c_{n+1, 0}=c_{\lambda_{n+1}}$ and $c_{n+1, 1}=c_{\eta_{n+1}}$, have the same type over $\mathrm{dcl}_{\mathrm{Sk}}(Mc_{\leq n, 0}, c_{\leq n, 1})$.
\end{proof}

We now apply the generalized freedom axiom to carry out the argument for Proposition 3.14 of \cite{KR17}, the one underlying Kim's lemma in actual $\mathrm{NSOP}_{1}$ theories, to contradict the relative Kim's lemma.

We can find $\{c_{i, 0}, c_{i, 1}\}_{i \in \mathbb{Q}^{+}}$ for $\mathbb{Q^{+}}=\mathbb{Q} \cup \{\infty\}$, indiscernible over $M$ in the Skolemization with the same properties. Let $M' = \mathrm{dcl}_{\mathrm{Sk}}(M\{c_{i, 0}, c_{i, 1}\}_{i \in \mathbb{Q}})$, and $p(y)=\mathrm{tp}(c_{\infty, 0}/M')=\mathrm{tp}(c_{\infty, 1}/M')$.

\begin{claim}\label{1-indmorleysequenceexistence}
There is an $\ind$-Morley sequence over $M$ of realizations of $p(y)$.
\end{claim}

\begin{proof}
By compactness, it suffices to show the same replacing $p(y)$ with its restriction to $M_{j}=\mathrm{dcl}_{\mathrm{Sk}}(M\{c_{i, 0}, c_{i, 1}\}_{i < j})$ for some $j \in \mathbb{Q}$. But this is just $\mathrm{tp}(c_{j+1, 1}/M_{j})$, and $\{c_{k, 1}\}_{j < k \leq j+1}^{\infty}$, read backwards, is an $\ind$-Morley sequence starting with $c_{j+1,1}$ indiscernible over $M_{j}$.
\end{proof}

Now just as in the proof of Proposition 3.14 of \cite{KR17}, the consistency of $\{\varphi(x, c_{i, 0})\}_{i \in \mathbb{Q}^{+}}$ gives an $M'$-finitely satisfiable global extension $p'$ of $p$ whose Morley sequences do not witness the Kim-dividing of $\varphi(x, c_{\infty, 0}) $ over $M'$. This type $p'$ is the the limit type of $\{c_{i, 0}\}_{i \in \mathbb{Q}}$ over $M$. But by the generalized freedom axiom and Claim \ref{1-indmorleysequenceexistence}, a $\ind$-Morley sequence starting with $c_{\infty, 1}$ over $M'$ will remain an $\ind$-Morley sequence over $M$. Such a sequence will witness the Kim-dividing of $\varphi(x, c_{\infty, 1}) $ over $M$, and thus over $M'$, by the inconsistency of $\{\varphi(x, c_{i, 1})\}_{i \in \mathbb{Q}}$. Since $c_{\infty, 1} \equiv_{M} c_{\infty, 0}$, this contradicts the relative Kim's lemma.

($\Leftarrow$) Suppose the relative Kim's lemma fails. We will find $a, b, M'$ with $\mathrm{tp}(a/M'b)$ finitely satisfiable and thus invariant over $M'$, so a fortiori $a \ind^{K^{\ind}}_{M'} b$, but with $b \nind^{K^{\ind}}_{M'} a$. This will prove that $\ind^{K^{\ind}}$ is not symmetric, as desired. The failure of the relative Kim's lemma gives us a formula $\varphi(x, c)$ that $\ind$-Kim divides over $M$, and an $M$-invariant extension $q(y)$ of $\mathrm{tp}(c/M)$ whose invariant Morley sequences do not witness this Kim-dividing. Choose a Skolemization of $T$.

\begin{claim}\label{1-consistencyinconsistency2}
We get the same configuration as in Claim \ref{1-consistencyinconsistency}: There is a sequence $\{c_{i, 0}, c_{i, 1}\}_{i \in \mathbb{Z}}$ with $c_{i, 0} \equiv_{\mathrm{dcl}_{\mathrm{Sk}}(Mc_{<i, 0}, c_{<i, 1})} c_{i, 1}$, and a formula $\varphi(x, y)$ with $\{\varphi(x, c_{i, 0})\}_{i \in \mathbb{Z}}$ consistent, but $\{c_{i, 1}\}_{i \in \mathbb{Z}}$, read backwards, an $\ind$-Morley sequence over $M$ in $\mathrm{tp}(c/M)$. Therefore $\{\varphi(x, c_{i, 1})\}_{i \in \mathbb{Z}}$ is inconsistent. 
\end{claim}

\begin{proof}
Attempting the method of Proposition 3.15 of \cite{KR17}, we fail to respect the Skolemization, so we will instead construct a very large tree, say, by induction and compactness. See Lemma 4.5 of \cite{NSOP2}; the construction here will be similar but easier. We will construct trees $\{a_{\eta}\}_{\eta \in \omega^{< n}}$ satisfying the following properties:

(1) For a branch $\sigma \in \omega^{n}$, the branch read backwards, i.e. $a_{\sigma|_n}, a_{\sigma|_{n-1}}, \ldots, a_\emptyset$, begins an invariant Morley sequence in $q(x)$ over $M$, i.e. $a_{\sigma|_{i}} \models q(x)|_{a_{\sigma_{i+1}} \ldots a_{n}}$ for $ 0 \leq i < n$

(2) The maximal subtrees at a given node $\eta \in \omega^{< n}$ form an $\ind$-Morley sequence over $M$: for $k < \omega$, $a_{\unrhd \eta \smallfrown \langle k \rangle} \equiv_{M} a_{\unrhd \eta \smallfrown \langle 0 \rangle}$ and $a_{\unrhd \eta \smallfrown \langle k \rangle} \ind_{M} a_{\unrhd \eta \smallfrown \langle n-1 \rangle} \ldots a_{\unrhd \eta \smallfrown \langle 0 \rangle}$.

Suppose the $n$th stage $I = \{a_{\eta}\}_{\eta \in \omega^{< n}}$ is constructed. We can easily find the following:

(a) An $\ind$-Morley sequence $\{I^{i}\}_{i < \omega}$ with $I^{0}=I$.

(b) A root node $a'_{\emptyset} \models q(x)|_{M_{\{I^{i}\}_{i < \omega}}} $.

Reindexing, we get $I = \{a_{\eta}\}_{\eta \in \omega^{< n+1}}$ satisfying (1) and (2).

Notice that (1) implies (1$'$) and (2) implies (2$'$):

(1$'$) For $\sigma \in \omega^{\leq n}$, $\{\varphi(x, a_{\sigma|_{n}})\}_{i < n}$ is consistent (because Morley sequences in $q(x)$ do not witness Kim-dividing of $\varphi(x, b)$.)

(2$'$) Let $\eta_{1} <_{\mathrm{lex}} \ldots <_{\mathrm{lex}} \eta_{k} \in \omega^{< n}$ with $\eta_{i+1} \wedge \ldots \wedge \eta_{1} \lhd \eta_{i} \wedge \ldots \wedge \eta_{1}$ for $1 \leq i < k$. Then $\{a_{\eta_{k}}\}_{i \leq k}$ begin an $\ind$-Morley sequence over $M$ in $\mathrm{tp}(a/M)$ (by (2)); that is, $a_{\eta_{i}} \equiv_{M} a$ and $a_{\eta_i} \ind_{M} a_{\eta_{i -1}} \ldots a_{\eta_{1}}$ for $1 \leq i < k$.

Just as in the penultimate paragraph of the proof of Claim 3.10, we can then use compactness to get a tree a tree $\{c_{\eta}\}_{\eta \in \omega^<\kappa}$ for large $\kappa$ satisfying both of these conditions: (1$'$) for $\sigma \in \omega^{\leq \kappa}$, $\{\varphi(x, a_{\sigma|_{\lambda}})\}_{\lambda < \kappa}$ is consistent, and (2$'$) for $\eta_{1} <_{\mathrm{lex}} \ldots <_{\mathrm{lex}} \eta_{k} \in \omega^{< \kappa}$ with $\eta_{i+1} \wedge \ldots \wedge \eta_{1} \lhd \eta_{i} \wedge \ldots \wedge \eta_{1}$ for $1 \leq i < k$, $\{a_{\eta_{k}}\}_{i \leq k}$ begins an $\ind$-Morley sequence over $M$ in $\mathrm{tp}(a/M)$.

The claim follows as in the last paragraph of the proof of Claim \ref{1-consistencyinconsistency}.

\end{proof}

Now we follow Proposition 5.6 in Chernikov and Ramsey in \cite{CR15}, the result underlying the other direction of Kaplan and Ramsey's symmetry characterization of $\mathrm{NSOP}_{1}$ in \cite{KR17}. After choosing $\{c_{i, 0}, c_{i, 1}\}_{i \in \mathbb{Z}}$ as in Claim \ref{1-consistencyinconsistency2} to be indiscernible in the Skolemization, let us replace the index set $\mathbb{Z}$ with $\mathbb{Q}^{+}= \mathbb{Q} \cup \{\infty\}$. Let $M'=\mathrm{dcl}_{\mathrm{Sk}}(M\{c_{i, 0}, c_{i, 1}\}_{i \in \mathbb{\mathbb{Q}}})$. By consistency of $\{\varphi(x, c_{i, 0})\}_{i \in \mathbb{\mathbb{Q}^{+}}}$, we find $b \models \{\varphi(x, c_{i, 0})\}_{i \in \mathbb{\mathbb{Q}^{+}}}$. By Ramsey, compactness and an automorphism, we can additionally choose $b$ so that $\{ c_{i, 0}\}_{i \in \mathbb{\mathbb{Q}^{+}}}$ is $Mb$-indiscernible. Let $a=c_{\infty, 0}$ so that $b \models \varphi(x, a)$. Then $\mathrm{tp}(a/M'b)$ is finitely satisfiable over $M'$. It remains to show that $\varphi(x, a)$ $\ind$-Kim-divides over $M'$. By invariance and $a \equiv_{M'} c_{1, \infty}$, it is enough to show that  $\varphi(x, c_{1, \infty})$ $\ind$-Kim-divides over $M'$. The sequence $\{c_{i, 1}\}_{i \in \mathbb{\mathbb{Q}^{+}}}$ read backwards is an $\ind$-Morley sequence over $M$ starting with $c_{1, \infty}$. So will $\{c_{i, 1}\}_{i \in \mathbb{\mathbb{Q}^{+}}}$ will witness Kim-dividing of $\varphi(x, c_{1, \infty})$. Thus we only have to show the following claim, concluding by the generalized freedom axiom as in the other direction: 

\begin{claim}
There is an $\ind$-Morley sequence over $M$ of realizations of $\mathrm{tp}(c_{1, \infty}/M')$
\end{claim}

\begin{proof}
Exactly as in Claim \ref{1-indmorleysequenceexistence}.
\end{proof}

\end{proof}

\begin{example}
In Examples \ref{1-freeamalgamationtheories} and \ref{1-notfullystationary}, $\ind$-Kim-independence coincides with $\ind^{a}$. Clearly it implies $\ind^{a}$. Now suppose $a\ind^{a}_{M} b$. By extension for $\ind^{a}$ we can assume $a$ and $b$ are algebraically closed sets, or models, containing $M$. Now, in Example \ref{1-freeamalgamationtheories}, $a\ind^{K^{\ind}}_{M} b$ follows from Lemma 7.6 of \cite{Co15} (or the proof of Remark \ref{1-freedomgeneralizedfreedom}). In Example \ref{1-notfullystationary} (where $\ind$ differs from free amalgamation in general), it follows from the discussion in that example.

We will generalize this discussion to all free amalgamation theories in Proposition \ref{1-trivialconantindependence}.
\end{example}

\begin{example}
If $T$ is $\mathrm{NSOP}_{1}$, $\ind$-Kim independence will always coincide with Kim-independence. This is by Kim's lemma in $\mathrm{NSOP}_{1}$ theories, Fact \ref{1-kimslemma}.
\end{example}

Suppose $\ind$ satisfies the generalized freedom axiom and the equivalent conditions of Theorem \ref{1-relativensop1}. Then the (in this case superficially) relative notion of $\ind$-Kim independence is not really a relative notion at all, but rather a new notion of independence with an intrinsically model-theoretic definition.

\begin{definition}\label{1-conantindependence}
Let $M$ be a model and $\varphi(x, b)$ a formula. We say $\varphi(x, b)$ \emph{Conant-divides} over $M$ if for \emph{every} invariant Morley sequence $\{b_{i}\}_{i < \omega}$ over $M$ starting with $b$, $\{\varphi(x, b)\}_{i < \omega}$ is inconsistent. We say $\varphi(x, b)$ \emph{Conant-forks} over $M$ if and only if it implies a disjunction of formulas Conant-dividing over $M$. We say $a$ is \emph{Conant-independent} from $b$ over $M$, written $a \ind^{K^{*}}_{M}b$, if $\mathrm{tp}(a/Mb)$ does not contain any formulas Conant-forking over $M$.
\end{definition}

Note that Conant-dividing is just ``strong Kim-dividing," Definition 5.1 of \cite{KRS19}.

\begin{corollary}
Suppose $\ind$ satisfies the generalized freedom axiom. Then if $\ind$-Kim independence is symmetric, it coincides with Conant-independence.
\end{corollary}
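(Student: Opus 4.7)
The plan is to deduce the coincidence of $\ind^{K^{\ind}}$ and $\ind^{K^{*}}$ from the statement that, under our hypotheses, a formula $\ind$-Kim-divides over $M$ if and only if it Conant-divides over $M$. Once that formula-level equivalence is established, the two notions of forking (each defined as implying a finite disjunction of the respective dividing formulas) coincide tautologically, and hence so do the two independence relations they define.

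One direction is immediate: if $\varphi(x,b)$ Conant-divides over $M$, then by definition every invariant Morley sequence over $M$ starting with $b$ witnesses Kim-dividing of $\varphi(x,b)$. By Fact \ref{1-morleysequencebasics}(2), every $\ind$-Morley sequence over $M$ starting with $b$ is in particular an invariant Morley sequence over $M$, so it witnesses Kim-dividing; hence $\varphi(x,b)$ $\ind$-Kim-divides.

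For the converse I would invoke Theorem \ref{1-relativensop1}: the symmetry of $\ind^{K^{\ind}}$ yields the relative Kim's lemma. Writing $p = \mathrm{tp}(b/M)$ and letting $p^{*}$ be the distinguished $M$-invariant global extension of $p$ with $a \ind_{M} B$ whenever $a \models p^{*}|_{MB}$ (Fact \ref{1-morleysequencebasics}(1)), the relative Kim's lemma says precisely that $p^{*}$ is least in the Kim-dividing order among $M$-invariant extensions of $p$. Now assume $\varphi(x,b)$ $\ind$-Kim-divides over $M$: by Fact \ref{1-morleysequencebasics}(2) the $\ind$-Morley sequences in $p$ over $M$ are exactly the invariant Morley sequences in $p^{*}$, so Morley sequences in $p^{*}$ witness Kim-dividing of $\varphi(x,b)$. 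Since $p^{*}$ is least, Morley sequences in every $M$-invariant extension $q$ of $p$ also witness Kim-dividing of $\varphi(x,b)$. Any invariant Morley sequence over $M$ starting with $b$ is a Morley sequence in some such $q$, so $\varphi(x,b)$ Conant-divides over $M$, as required.

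I do not anticipate a substantive obstacle: the genuine work has already been absorbed into Theorem \ref{1-relativensop1}, and the Corollary is essentially a matter of unpacking the relative Kim's lemma against Definition \ref{1-conantindependence} and observing that $\ind$-Morley sequences are exactly the invariant Morley sequences in the $\ind$-generic extension $p^{*}$.
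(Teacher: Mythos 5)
Your proposal is correct and takes essentially the same route as the paper: the paper's proof likewise invokes Theorem \ref{1-relativensop1} to convert symmetry of $\ind^{K^{\ind}}$ into the relative Kim's lemma, concludes that $\ind$-Kim-dividing coincides with Conant-dividing, and deduces the coincidence of the forking notions. Your explicit unpacking of the leastness of $p^{*}$ in the Kim-dividing order (together with the observation from Fact \ref{1-morleysequencebasics}(2) that $\ind$-Morley sequences are exactly the invariant Morley sequences in $p^{*}$) is precisely the content the paper leaves implicit in its two-line argument.
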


\begin{proof}
By Theorem 
\ref{1-relativensop1}, $\ind$-Kim-dividing coincides with Conant-dividing, because $\ind$ satisfies the relative Kim's lemma. So $\ind$-Kim-forking coincides with Conant-forking.
\end{proof}

Conant-independence will coincide with Kim-independence in $\mathrm{NSOP}_{1}$ theories and with $\ind^{a}$ in free amalgamation theories (see below), so it is not readily apparent from these examples that Conant-independence is a new independence notion. Nonetheless, in section 5 we will discuss some interesting examples of strictly $\mathrm{NSOP}_{4}$ theories not covered by these cases.

Note that a related notion called ``Conant-independence" is defined using \textit{finitely satisfiable} Morley sequences in \cite{NSOP2}, where it is shown to coincide with Kim-independence in $\mathrm{NSOP}_{2}$ theories. Despite the fact that the choice between invariant or finitely satisfiable Morley sequences does not matter for Kim-independence in $\mathrm{NSOP}_{1}$ theories (see Fact 2.1), it is not known when our notion of Conant-independence and the one from \cite{NSOP2} coincide\footnote{We suspect that they do not, even in $\mathrm{NSOP}_{4}$ theories. If Conant-independence with respect to finitely satisfiable Morley sequences coincided with the standard Conant-independence $\ind^{a}$ in the triangle-free random graph, and there were also least finitely satisfiable types in the restriction of the Kim-dividing order to finitely satisfiable types, then $\ind^{a}$ would have to satisfy a ``weak independence theorem" (see Proposition 6.10 of \cite{KR17} for the original result, or proposition 5.7 of \cite{NSOP2}, whose proof is quoted below, for a result involving Conant-independence with respect to finitely satisfiable Morley sequences) with respect to those least finitely satisfiable types. But satisfying a weak independence theorem for $\ind^{a}$ can be seen in this example to characterize free amalgamation (the standard one, with no new edges). So if Definition \ref{1-conantindependence} coincides with the definition from \cite{NSOP2}, the invariant types given by standard free amalgamation in the triangle-free random graph (that is, the types $p^{*}$ as in Fact \ref{1-morleysequencebasics}.1, where $\ind$ is free amalgamation) are finitely satisfiable. We do not think that the invariant types given by free amalgamation in the triangle-free random graph are finitely satisfiable, suggesting that Definition \ref{1-conantindependence} does not coincide with the definition from \cite{NSOP2}}.

\begin{example}
In an $\mathrm{NSOP}_{4}$ theory, or even a theory with symmetric Conant-independence, the relation $\ind$ might satisfy the generalized freedom axiom but not the equivalent conditions of Theorem \ref{1-relativensop1}. Consider the theory of the generic $K_{3}$-free graph with two constants, $c$ and $d$, for distinct vertices not connected by an edge. (That $c$ and $d$ are not connected is not essential here, but it is included for completeness of $T$.) This is $\mathrm{NSOP}_{4}$, originally by work of Shelah (\cite{She95}). Declare $A \ind_{M} B$ if the following hold

(a) $A \cap B \subseteq M$, and

(b) a node $a$ of $A \backslash M$ and a node $b$ of $B \backslash M$ have an edge between them if and only if

(i) $a$ connects to the constant $c$, and $b$ connects to the constant $d$, and

(ii) $a$ and $b$ connect to no common vertices in $M$.

This makes sense as an abstract stationary relation satisfying the assumptions at the beginning of this section; we will show full existence. To see this, suppose $AM$ and $BM$ form triangle-free graphs with $A \cap B \subseteq M$, and that $A$ and $B$ are amalgamated according to (b)(i), (b)(ii). We show that $ABM$ has no triangles. Without loss of generality, a triangle $e, f, g$ will have $e \in A \backslash M$, $f \in B \backslash M$, and $g$ in either $B \backslash M$ or in $M$. But the first case is impossible; by (b)(i) and the fact $f$ and $g$ both connect to $e$, $f$ and $g$ both connect to $d$. Because $MB$ has no triangles, so in particular $d, f, g$ is not a triangle, $f$ and $g$ cannot have an edge. So $e, f, g$ is not a triangle. Nor may $e, f, g$ form a triangle in second case, where $g \in M$, and $e, f \in M \backslash A$, because then $e$ and $f$ would connect to the common vertex $g \in M$, so will not be connected to each other, by (b)(ii).

We show the generalized freedom axiom for $\ind$. Let $M \prec M'$ and suppose that there is an $\ind$-Morley sequence $\{A_{i}\}_{i < \omega}$ over $M$ starting with $A$, which is indiscernible over $M'$. We show that an $\ind$-Morley sequence over $M'$ starting with $A$ will also be an $\ind$-Morley sequence over $M$. We first note two consequences of the hypothesis that there is an $\ind$-Morley sequence $\{A_{i}\}_{i < \omega}$ over $M$ starting with $A$, which is indiscernible over $M'$. First, clearly $A \cap M' \subseteq M$. Second,

(*) for all $e, f \in A \backslash M=A \backslash M'$ (possibly with $e = f$), if $e$ connects to $c$, $f$ connects to $d$, and $e$ and $f$ connect to no common vertices in $M$, then $e$ and $f$ connect to no common vertices in $M'$.

To show this, let $e_{1}A_{1} \equiv_{M} e A$ (so $e_{1} \in A_{1}$). It suffices to show that $e_{1}$ and $f$ connect to no common vertices in $M'$ under the hypotheses on $e$ and $f$. Note that $e_{1}$ connects to $c$, $f$ connects to $d$, and $e_{1}$ and $f$ connect to no common vertices of $M$. Moreover, $e_{1} \ind_{M} f$. So by definition of $\ind$, $e_{1}$ and $f$ have an edge. So they cannot connect to a common vertex $m \in M'$, because then $e_{1}, m, f$ would form a triangle.

Now let $\{A^{i}\}_{i \in \omega}$ denote an $\ind$-Morley sequence over $M'$ starting with $A$. We must show that $\{A^{i}\}_{i \in \omega}$ is also an $\ind$-Morley sequence over $M$. Let $n < \omega$; we show that $A^{n} \ind_{M} A^{< n}$. Clearly $A^{n} \cap M' \subseteq M$ and $A^{n} \cap A^{<n} \subseteq M'$ implies $A^{n} \cap A^{<n} \subseteq M$. It remains to show that $A^{n}$ and $A^{< n}$ satisfy criterion (b) of $A^{n} \ind_{M} A^{< n}$. Let $e \in A^{n}$, and suppose without loss of generality that $f \in A^{n-1}$. Suppose first of all that $e$ connects to the constant $c$, $f$ connects to the constant $d$, and $e$ and $f$ connect to no common vertices of $M$. We show that $e$ and $f$ must be connected. Let $f' \in A^{n}$ be such that $f'A^{n}=fA^{n-1}$. Then $e$ and $f'$ connect to no common vertices of $M$, while $e$ is connected to $c$ and $f'$ is connected to $d$. So by (*) and $A^{n} \equiv_{M} A$, $e$ and $f'$ connect to no common vertices of $M'$, so $e$ and $f$ connect to no common vertices of $M'$. Again, $e$ connects to $c$, $f$ connects to $d$, and $e$ and $f$ connect to no common vertices of $M'$. So because $e \ind_{M'} f$ (i.e. because $A^{n} \ind_{M} A^{<n}$ ), $e$ and $f$ must be connected. Now suppose that $e$ and $f$ are connected: we must show that $e$ connects to the constant $c$, $b$ connects to $d$, and $e$ and $f$ connect to no common vertices of $M$. But $e \ind_{M'} f$, so $e$ connects to the constant $c$, $b$ connects to $d$, and $e$ and $f$ connect to no common vertices of $M'$, a fortiori $M$.

However, $\ind$ does not satisfy the relative Kim's lemma (so neither is relative Kim-independence symmetric). Consider distinct disconnected vertices $\overline{c}=\{c_{1}, c_{2}\}$ outside of $M$, $c_{1}$ connected only to $c$ and to no other vertices of $M$, $c_{2}$ connected only to $d$ and to no other vertices of $M$. Consider the formula $\varphi(x, \overline{c}) =:  xRc_{1} \wedge xRc_{2}$. Clearly this does not Kim-divide with respect to the standard free amalgamation given by adding no new edges. But it $\ind$-Kim-divides, as if $\overline{c}^{1}, \overline{c}^{2}$ begin an $\ind$-Morley sequence of copies of realizations of $\mathrm{tp}(\overline{c}/M)$, then $c^{2}_{1}$ and $c^{1}_{2}$ are related by an edge, making it impossible for some other vertex to connect to both of them. So because $\varphi(x, c)$ $\ind$-Kim-divides over $M$, but its dividing is not witnessed by Morley sequences over $M$ with respect to the standard free amalgamation (which are invariant Morley sequences over $M$), the relative Kim's lemma fails for $\ind$.

Therefore, we cannot get a witnessing lemma for stationary independence relations for $\mathrm{NSOP}_{4}$ theories of the kind we obtained for $\mathrm{NSOP}_{2}$ theories in Theorem 4.3 of \cite{NSOP2} in order to prove the $\mathrm{NSOP}_{1}$-$\mathrm{SOP}_{3}$ dichotomy (so in that case, $\mathrm{NSOP}_{1}$). Moreover, this example tells us that even though the equivalent conditions of Theorem \ref{1-relativensop1} imply that $\ind$-Kim independence is just Conant-independence, in the statement of Theorem \ref{1-relativensop1} we must still consider $\ind$-Kim-independence itself and not just Conant-independence. This is because two things can happen at once in a theory $T$:

(a) Conant-independence is symmetric, even by virtue of an independence relation $\ind^{1}$, with the generalized freedom axiom, to which the equivalent conditions of Theorem \ref{1-relativensop1} apply. For example, the triangle-free random graph $T$, as noted in Example \ref{1-freeamalgamationtheories}, has for $\ind^{1}$ the free amalgamation from \cite{Co15}.

(b) There is some relation $\ind^{2}$ with the generalized freedom axiom but without the relative Kim's lemma, so that $\ind^{2}$-Kim independence, which will not be Conant-independence, remains asymmetric. For example, in the triangle-free random graph $T$, there is for $\ind^{2}$ the relation constructed in this example.
\end{example}

We conclude this section by isolating two model-theoretic assumptions, related by the generalized freedom axiom as in Theorem \ref{1-relativensop1} and without any known $\mathrm{NSOP}_{4}$ counterexamples, which together imply that a theory must be either $\mathrm{NSOP}_{1}$ or $\mathrm{SOP}_{3}$, and either $\mathrm{TP}_{2}$ or simple.

\begin{definition}\label{1-strongwitnessingproperty}
We say a theory $T$ has the \textit{strong witnessing property} if, for every $M \prec \mathbb{M}$, there is a global type $\mathbb{P}(\mathbb{X})$ extending $\mathrm{tp}(\mathbb{M}/M)$ (i.e. a type with parameters in $\mathbb{M}$ and unboundedly many variables $\mathbb{X}$ corresponding to the singletons of $\mathbb{M}$) so that the restriction $\mathbb{P}(\mathbb{X})|_{x}$ to boundedly many variables $x \subseteq \mathbb{X}$ is least in the Kim-dividing order among $M$-invariant extensions of $(\mathbb{P}(\mathbb{X})|_{x})|_{M}$.
\end{definition}

For example, let $\ind$ an independence relation satisfying the assumptions at the beginning of this section. By compactness and the assumptions on $\ind$, for all $M$ there is a type $\mathbb{P}(\mathbb{X})$ so that, for $a \models (\mathbb{P}(\mathbb{X})|_{x})|_{B}$, $a \ind_{M} B$. By the definition of the relative Kim's lemma, if $\ind$ satisfies the relative Kim's lemma, then $\mathbb{P}(\mathbb{X})$ realizes the strong witnessing property for $M$. To visualize this, consider, in the triangle-free random graph, the type $\mathbb{P}(\mathbb{X}) =: \mathrm{tp}(\mathbb{M}_{1}/\mathbb{M})$, where 
$\mathbb{M} \succ M \prec \mathbb{M}_{1}$, $\mathbb{M}, \prec \overline{\mathbb{M}} \succ \mathbb{M}_{1}$ for some very large model $\overline{\mathbb{M}}$, $\mathbb{M}_{1} \equiv_{M} \mathbb{M}$, and $\mathbb{M}_{1}$ and $\mathbb{M}$ are freely amalgamated over $M$ as in Example \ref{1-freeamalgamationtheories}.

\begin{theorem}
If a theory $T$ satisfies the strong witnessing property and has symmetric Conant-independence, then it is either $\mathrm{NSOP}_{1}$ or $\mathrm{SOP}_{3}$.
\end{theorem}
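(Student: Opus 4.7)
The plan is to argue by contrapositive: assume $T$ is $\mathrm{SOP}_{1}$ and derive $\mathrm{SOP}_{3}$, using symmetric Conant-independence together with the strong witnessing property as a substitute for the stationary independence machinery used in the free-amalgamation setting. The argument adapts the strategy from \cite{NSOP2} for the $\mathrm{NSOP}_{1}$--$\mathrm{NSOP}_{2}$ equivalence to the present context, replacing canonical coheirs with the global type $\mathbb{P}(\mathbb{X})$ supplied by Definition \ref{1-strongwitnessingproperty}. Throughout, fix a model $M$ and a type $\mathbb{P}(\mathbb{X})$ witnessing the strong witnessing property for $M$; for each restriction to finitely many variables $x$, write $p^{*}_{x}$ for $\mathbb{P}(\mathbb{X})|_{x}$ and note that $p^{*}_{x}$ is least in the Kim-dividing order over $M$. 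Invariant Morley sequences drawn from $\mathbb{P}$ will thus play the role of the $\ind$-Morley sequences of Section 3 (cf.\ Fact \ref{1-morleysequencebasics} and Definition \ref{1-relativekimslemma}).

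First I will harvest from $\mathrm{SOP}_{1}$ a Kim-dividing configuration with strong consistency along one ``side.'' Starting from a formula $\varphi(x,y)$ and tree $\{b_{\eta}\}_{\eta\in 2^{<\omega}}$ witnessing $\mathrm{SOP}_{1}$, I will extract (via Ramsey and compactness, as in the proofs of Claims \ref{1-consistencyinconsistency} and \ref{1-consistencyinconsistency2}, and in the Skolemization) a sequence $\{c_{i,0}, c_{i,1}\}_{i<\omega}$ with the following features: $c_{i,0}\equiv_{\mathrm{dcl}_{\mathrm{Sk}}(Mc_{<i,0}c_{<i,1})} c_{i,1}$, the set $\{\varphi(x,c_{i,0})\}_{i<\omega}$ is consistent, the sequence $\{c_{i,1}\}_{i<\omega}$ is an invariant Morley sequence drawn from the least-type $p^{*}$ supplied by $\mathbb{P}$, and $\{\varphi(x,c_{i,1})\}_{i<\omega}$ is inconsistent. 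The strong witnessing property is what makes the second-coordinate sequence behave like a stationary Morley sequence even though no $\ind$ is assumed.

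Second, I will upgrade symmetry of Conant-independence into a weak independence theorem over the least type, following Proposition 5.7 of \cite{NSOP2} line by line with invariant-Morley chain conditions in place of finitely-satisfiable ones. Concretely, I will show: if $a\equiv_M a'$, $a \ind^{K^{*}}_{M} b$, $a'\ind^{K^{*}}_{M} c$, and $bc$ starts an invariant Morley sequence in $p^{*}$, then some $a''$ with $a''\equiv_{Mb} a$ and $a''\equiv_{Mc} a'$ exists. The proof uses the chain condition induced by least-ness of $p^{*}$ to move the witnessing Morley sequence into a position over $Ma$, then symmetry of $\ind^{K^{*}}$ to flip and get $\ind^{K^{*}}_M$ from the other side, and finally compactness to realize the amalgamated type.

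Finally I will assemble the $\mathrm{SOP}_{3}$-witness. Using the weak independence theorem and the sequence from the first step, I will build a long sequence $\{a_i\}_{i<\omega}$ together with two formulas $\psi_0(x,y), \psi_1(x,y)$ (derived from $\varphi$ and the $(c_{i,0},c_{i,1})$ coupling) such that $\{\psi_0(x,a_i),\psi_1(x,a_j)\}$ is consistent for $i<j$, $\{\psi_0(x,a_i),\psi_1(x,a_j)\}$ is inconsistent for $i>j$, and the relation $R(y,z):=\exists x(\psi_0(x,y)\wedge\psi_1(x,z))$ has no $3$-cycles. The chain property of $R$ comes from the consistency of $\{\varphi(x,c_{i,0})\}$; the lack of $3$-cycles comes from $\mathrm{SOP}_{3}$-style two-formula obstructions on the $c_{i,1}$ side combined with the weak independence theorem, which forces any would-be $3$-cycle to produce a realization of the inconsistent configuration extracted in the first step. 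This contradicts $\mathrm{NSOP}_{3}$.

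The main obstacle is Step~2: the weak independence theorem in \cite{NSOP2} was stated for canonical coheirs, whose finite satisfiability gave extra chain-condition slack. Here only ``least in the Kim-dividing order'' is available, so one has to carefully re-prove the chain condition for Conant-independence against $p^{*}$-Morley sequences, arguing as in Claim \ref{1-cc} that any formula $\ind^{K^{*}}$-violating the conclusion must already be witnessed to Kim-divide by the least type $p^{*}$, then applying symmetry. Once this is in place, Steps~1 and~3 are close adaptations of the $\mathrm{NSOP}_{2}\Rightarrow \mathrm{NSOP}_{1}$ argument truncated at the $\mathrm{SOP}_{3}$ level, since producing a relation with no $3$-cycles is strictly weaker than producing the full $\mathrm{SOP}_{2}$ tree.
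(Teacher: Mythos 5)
Your high-level skeleton matches the paper's: the paper also proves this by constructing strong-witnessing analogues of canonical coheirs, establishing the chain condition as in Claim \ref{1-cc}, running the weak independence theorem of Proposition 5.7 of \cite{NSOP2} with invariant Morley sequences in least types in place of canonical Morley sequences, and then invoking the $\mathrm{SOP}_{3}$ construction of Section 6 of \cite{NSOP2} (itself modeled on Theorem 7.17 of \cite{Co15}). But your Step 1 contains a genuine error, in both source and orientation. The two-column configuration $\{c_{i,0},c_{i,1}\}$ you invoke belongs to the proof of Theorem \ref{1-relativensop1}: Claims \ref{1-consistencyinconsistency} and \ref{1-consistencyinconsistency2} are extracted from the failure of symmetry of $\ind^{K^{\ind}}$, respectively the failure of the relative Kim's lemma, for an assumed stationary relation $\ind$ --- no such relation is available under the present hypotheses, and nothing in the paper (or in \cite{NSOP2}) extracts this configuration directly from an $\mathrm{SOP}_{1}$ tree. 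The correct route from $\mathrm{SOP}_{1}$ is through Fact \ref{1-kimslemma}: its contrapositive yields a formula $\varphi(x,b)$ Kim-dividing over $M$ together with an invariant (even finitely satisfiable) Morley sequence that fails to witness the dividing. Since $\mathbb{P}(\mathbb{X})|_{x}$ is \emph{least} in the Kim-dividing order, its Morley sequences witness Kim-dividing of $\varphi(x,b)$ only if \emph{every} invariant Morley sequence does; hence the strong-witnessing sequences are the \emph{consistent} side, and the inconsistency lives on some other invariant Morley sequence. You assert the reverse: $\{\varphi(x,c_{i,1})\}$ inconsistent along the $p^{*}$-Morley sequence. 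With your orientation $\varphi$ Conant-divides over $M$, and then the amalgamation in your Step 3 collapses: your weak independence theorem in Step 2 applies only over sequences in $p^{*}$, so the consistency side of the $\mathrm{SOP}_{3}$ chain must run along $p^{*}$-Morley sequences --- precisely the side you have made inconsistent. (Indeed, by the argument of Fact \ref{1-coheirs}, Conant-dividing formulas are inconsistent along all finitely-satisfiable-type Morley sequences, so there is no stationary-like consistent side left to amalgamate.)

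A second, smaller omission: before Proposition 5.7 of \cite{NSOP2} can be run, one needs the analogue of \emph{strong} canonical coheirs --- the paper's ``strong witnessing extensions,'' global types $q(x)$ such that for $c\models q(x)|_{Mb}$ and any $a$ there is $a'\equiv_{Mc}a$ with $\mathrm{tp}(a'c/Mb)$ extending to an $M$-invariant type least in the Kim-dividing order. Their existence is the substantive use of the monster-sized type $\mathbb{P}(\mathbb{X})$ (the analogue of Lemma 5.5 of \cite{NSOP2}, finding $\mathbb{M}_{1}\models\mathbb{P}(\mathbb{X})$), and it is what lets leastness survive passage to pairs $a'c$ inside the independence-theorem argument. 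Your Step 2 works only with $p^{*}=\mathbb{P}(\mathbb{X})|_{x}$ and the chain condition, which is not enough to carry out Proposition 5.7 ``line by line''; re-proving the chain condition, which you correctly flag, is the easy half, while the strong-extension construction is the step the paper actually highlights as new.
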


\begin{proof}
If $p(x)$ is a type over $M$, then define a \textit{strong witnessing extension} of $p(x)$ to be a  global extension $q(x)$ of $p(x)$ with the following property: 

For all tuples $b \in \mathbb{M}$, if $c \in \mathbb{M}$ with $c \models q(x)|_{Mb}$, then for any $a \in \mathbb{M}$ there is $a' \equiv_{Mc}a$ so that $\mathrm{tp}(a'c/Mb)$ extends to an $M$-invariant type least in the Kim-dividing order.

This is just the definition of ``strong canonical coheir," but replacing ``canonical coheir" with ``$M$-invariant type least in the Kim-dividing order". By the strong witnessing property, strong witnessing types extending any type $p(x)$ over a model $M$ exist. The proof is formally the same as Lemma 5.5 of \cite{NSOP2}; instead of finding $\mathbb{M}_{1} \equiv_{M} \mathbb{M}$ with $\mathbb{M} \ind_{M}^{\mathrm{CK}} \mathbb{M}_{1}$, we find $\mathbb{M}_{1} \models \mathbb{P}(\mathbb{X})$. (In fact, existence of a strong witnessing types extending any small type $p(x)$ over a monster model can be shown to be equivalent to the strong witnessing property.)

Conant-dividing is the same as Kim-dividing witnessed by a Morley sequence in some (any) strong witnessing type. Therefore, Conant-forking is the same as Conant-dividing as in Proposition \ref{1-relativeforkingdividing}. Meanwhile, Conant-independence is symmetric by assumption, and the chain condition for Conant-independence with respect to Morley sequences in strong witnessing types is as in Claim \ref{1-cc}. So the result follows nearly word-for-word from the proofs of Proposition 5.7 of \cite{NSOP2} and the discussion in Section 6 in the same paper. We just replace any reference to the coheir notions of Conant-independence and Kim-dividing independence with the invariant notions, and replace any reference to strong canonical coheirs and canonical Morley sequences with strong witnessing types and invariant Morley sequences in strong witnessing types. This works because all we use about canonical Morley sequences in these arguments is that if a canonical Morley sequence over $M$ witnesses the Kim-dividing of a formula $\varphi(x, b)$ over $M$, then $\varphi(x, b)$ divides with respect to all coheir Morley sequences.

Note that the proof of Proposition 5.7 of \cite{NSOP2} comes directly from the proof of the ``weak independence theorem" (Proposition 6.10) of Kaplan and Ramsey in \cite{KR17}. It plays the role in our argument that the freedom axiom plays in Theorem 7.17 of \cite{Co15}, showing that a modular free amalgamation theory must be either simple or $\mathrm{SOP}_{3}$. The proof of that theorem serves as a basis for section 6 of \cite{NSOP2}, which requires a different argument in Claim 6.1 of that section; either the original one from \cite{NSOP2} or the alternative argument of \cite{Lee22} referenced in that paper.
\end{proof}

The following follows Theorem 7.7 of Conant (\cite{Co15}), using a similar argument:

\begin{theorem}\label{1-classification}
If a theory $T$ satisfies the strong witnessing property, then it is either $\mathrm{TP}_{2}$ or simple.
\end{theorem}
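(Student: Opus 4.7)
The plan is to follow Conant's strategy for Theorem 7.7 of \cite{Co15}, with the strong witnessing types provided by the hypothesis playing the role of the free amalgamation relation. Suppose for contradiction that $T$ is $\mathrm{NTP}_{2}$ but not simple; the aim is to derive symmetry of forking and thereby a contradiction.

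First, I would invoke the broad Kim's lemma for $\mathrm{NTP}_{2}$ theories of Chernikov and Kaplan: in an $\mathrm{NTP}_{2}$ theory, a formula $\varphi(x,b)$ forks over a model $M$ if and only if every invariant Morley sequence over $M$ starting with $b$ witnesses inconsistency of $\{\varphi(x,b_i)\}_{i<\omega}$. In the terminology of this paper, this says that forking coincides with Conant-dividing in $\mathrm{NTP}_{2}$ theories; in particular forking equals dividing. Since the strong witnessing property provides, for every model $M$, an invariant type whose restrictions are least in the Kim-dividing order, invariant Morley sequences in such strong witnessing types witness every instance of forking.

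Next, following the scheme of the preceding theorem (which leans on Proposition 5.7 of \cite{NSOP2}, itself modelled on the weak independence theorem of Kaplan and Ramsey in \cite{KR17}), I would establish both a chain condition (in the spirit of Claim \ref{1-cc}) and a weak independence theorem for Conant-independence, using invariant Morley sequences in strong witnessing types in place of canonical Morley sequences. Crucially, neither statement needs symmetric Conant-independence as input. From the chain condition and weak independence theorem one then deduces symmetry of Conant-independence by the formal template of Chernikov--Ramsey and Kaplan--Ramsey (\cite{CR15}, \cite{KR17}). By the first step, symmetry of Conant-independence is symmetry of forking in $T$, so $T$ is simple, contradicting the standing assumption.

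The main obstacle is the final deduction of symmetry: the Chernikov--Ramsey and Kaplan--Ramsey symmetry arguments were designed for Kim-independence under the $\mathrm{NSOP}_{1}$ hypothesis, and must be translated into the strong-witnessing-type setting without $\mathrm{NSOP}_{1}$. The $\mathrm{NTP}_{2}$ assumption is what makes this translation possible, since it forces forking, Kim-dividing witnessed by strong witnessing types, and Conant-dividing to coincide, so the role that Kim's lemma played in the $\mathrm{NSOP}_{1}$ argument is taken up in our setting by Chernikov and Kaplan's theorem on invariant Morley witnesses for forking in $\mathrm{NTP}_{2}$.
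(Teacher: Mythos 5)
There is a genuine gap, and it is in your very first step. The theorem of Chernikov and Kaplan that you invoke is misquoted: their Kim's lemma for $\mathrm{NTP}_{2}$ theories says that forking over a model $M$ is witnessed by \emph{strictly invariant} (strict) Morley sequences, not by \emph{all} invariant Morley sequences. The version you state would say that forking coincides with Conant-forking in every $\mathrm{NTP}_{2}$ theory, and that is false. Already in $\mathrm{DLO}$ (which is $\mathrm{NIP}$, hence $\mathrm{NTP}_{2}$, and not simple): take $b_{1} < b_{2}$ filling a cut $(L, R)$ of $M$; then $b_{1} < x < b_{2}$ divides over $M$ via a sequence of disjoint intervals, yet the $M$-invariant type sending $y_{1}$ just above $L$ and $y_{2}$ just below $R$ produces a Morley sequence of \emph{nested} intervals, so this formula forks but does not Conant-divide. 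Not every invariant type is strictly invariant, and Conant-dividing quantifies over all invariant Morley sequences; your identification of forking with Conant-forking therefore fails, and with it the bridge from "symmetry of Conant-independence" to "symmetry of forking," which is what your contradiction requires. There is a second, independent problem: you propose to \emph{derive} symmetry of Conant-independence from a chain condition plus a weak independence theorem "by the Chernikov--Ramsey and Kaplan--Ramsey template," but in this paper the implication from a Kim's-lemma-type hypothesis to symmetry (Theorem \ref{1-relativensop1}) essentially uses the generalized freedom axiom, which the strong witnessing property alone does not supply. Indeed, the companion theorem on the $\mathrm{NSOP}_{1}$--$\mathrm{SOP}_{3}$ dichotomy takes symmetric Conant-independence as a \emph{separate} hypothesis precisely because it is not known to follow from strong witnessing.

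The paper's actual proof is much more direct and avoids both $\mathrm{NTP}_{2}$ machinery and any symmetry considerations: it constructs a $\mathrm{TP}_{2}$ array outright. If $T$ is not simple, then Kim's lemma for dividing fails, so some $\varphi(x, b)$ divides over a model $M$ with respect to a long indiscernible sequence $I = \{b_{i}\}_{i < \kappa}$ ($k$-inconsistently) but does \emph{not} Conant-divide over $M$. Applying the strong witnessing property to $\mathrm{tp}(I/M)$, viewed in the variables $\overline{y} = \{y_{i}\}_{i<\kappa}$, yields a global $M$-invariant $p(\overline{y})$ extending $\mathrm{tp}(I/M)$ whose single-coordinate restrictions $p(\overline{y})|_{y_{i}}$ are least in the Kim-dividing order; by pigeonhole one may assume $p(\overline{y})|_{y_{i}} = q(y_{i})$ for a fixed $q$. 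An invariant Morley sequence $\{I_{i}\}_{i<\omega}$ in $p(\overline{y})$ then gives the rows of an array: each row is a copy of $I$, so the rows are $k$-inconsistent, while every path through the array is a Morley sequence in $q$ over $M$, hence consistent, because $q$ is least in the Kim-dividing order and $\varphi(x, b)$ does not Conant-divide. That is exactly a $\mathrm{TP}_{2}$ pattern. To repair your write-up you would need either the paper's direct construction or a correct substitute for the false witnessing lemma; as it stands, the argument does not go through.
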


\begin{proof}
If $T$ is not simple then dividing does not coincide with Conant-dividing, because otherwise $T$ would satisfy Kim's lemma for dividing. So there is a model $M$ and a formula $\varphi(x,b)$ that Kim-divides but does not Conant-divide over $M$. By dividing of $\varphi(x, b)$ over $M$, there is an indiscernible sequence $I=\{b_{i}\}_{i < \kappa}$ for very large $\kappa$, starting with $b$, and $\varphi(x, b) \in \mathrm{tp}(a/Mb)$ so that $\{\varphi(x, b_{i})\}_{i \in I}$ is $k$-inconsistent for some $k$. It follows from the strong witnessing property that we can find some $M$-invariant global type $p(\overline{y})$ (where $\overline{y} = \{y_{i}\}_{i < \kappa}$) extending $\mathrm{tp}(I/M)$ with the following property: for $i < \omega$, $p(\overline{y})|_{y_{i}}$ is least in the Kim-dividing order among $M$-invariant extensions of $(p(\overline{y})|_{y_{i}})|_{M}=\mathrm{tp}(b/M)$

By the pigeonhole principle we can assume that $p(\overline{y})|_{y_{i}}$ is constant in $i < \kappa$, by restricting the original $p(\overline{y})$ to a sub-tuple of variables\footnote{A version of this style of argument is used in the proof of Lemma 3.12 of \cite{CK09}.}. More precisely, there is some global $M$-invariant type $q(y)$ so that for $i < \omega$, $p(\overline{y})|_{y_{i}}= q(y_{i})$. Now take an invariant Morley sequence $\{I_{i}\}_{i < \omega}$ over $M$  in $p(\overline{y})$. Let us take $I_{i}$ to be the rows of an array; since $I_{i} \equiv_{M} I$ for $i < \omega$, we see that the rows $I_{i}$ give us inconsistent sets of instances of $\varphi(x, b)$. To get $\mathrm{NTP}_{2}$, it remains to show the paths are consistent. But the paths are Morley sequences in $q(y)$, which is least in the Kim-dividing order among $M$-invariant extensions of $\mathrm{tp}(b/M)$. So since $\varphi(x, b)$ does not Conant-divide over $M$, the paths do not witness dividing of $\varphi(x, b)$, so give a consistent set of instances of that formula.
\end{proof}

\begin{corollary}
Suppose $\ind$ satisfies the generalized freedom axiom and the relative Kim's lemma. Then $T$ is either $\mathrm{NSOP}_{1}$ or $\mathrm{SOP}_{3}$, and is either simple or $\mathrm{TP}_{2}$.
\end{corollary}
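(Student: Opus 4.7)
The plan is to combine the two preceding theorems, Theorem \ref{1-classification} and the theorem on the $\mathrm{NSOP}_1$/$\mathrm{SOP}_3$ dichotomy, after checking that both the strong witnessing property and symmetry of Conant-independence are implied by the hypotheses.

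First I would verify the strong witnessing property. By the assumptions at the beginning of the section (invariance, monotonicity, full existence over models, full stationarity), together with Fact \ref{1-morleysequencebasics}(1), for each $M$ one can assemble, by a standard chain-and-compactness argument, a single global type $\mathbb{P}(\mathbb{X})$ over $M$ in variables indexed by $\mathbb{M}$ whose restriction to any small tuple $x\subseteq\mathbb{X}$ is the unique global type $p^{*}$ as in Fact \ref{1-morleysequencebasics}(1) associated to $p=(\mathbb{P}(\mathbb{X})|_x)|_M$. This is precisely the point sketched right after Definition \ref{1-strongwitnessingproperty}. The relative Kim's lemma (Definition \ref{1-relativekimslemma}) then says that each such $p^{*}$ is least in the Kim-dividing order among $M$-invariant extensions of $p$, so $\mathbb{P}(\mathbb{X})$ witnesses the strong witnessing property for $M$. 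Since $M$ was arbitrary, $T$ has the strong witnessing property.

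Next I would obtain symmetry of Conant-independence. By Theorem \ref{1-relativensop1}, the hypotheses (generalized freedom plus the relative Kim's lemma) imply that $\ind^{K^{\ind}}$ is symmetric. By the corollary following Definition \ref{1-conantindependence}, in this case $\ind^{K^{\ind}}$ coincides with Conant-independence $\ind^{K^{*}}$. Hence Conant-independence is symmetric in $T$.

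Now applying Theorem \ref{1-classification} to $T$, which has the strong witnessing property, yields the simple/$\mathrm{TP}_2$ dichotomy; and applying the preceding theorem (on the $\mathrm{NSOP}_1$/$\mathrm{SOP}_3$ dichotomy) to $T$, which has both the strong witnessing property and symmetric Conant-independence, yields the $\mathrm{NSOP}_1$/$\mathrm{SOP}_3$ dichotomy. No real obstacle is expected: all the work has already been absorbed into the two previous theorems, and the hypotheses of the corollary are exactly calibrated so that both theorems apply simultaneously. The only small point to check carefully is that the global type $\mathbb{P}(\mathbb{X})$ constructed from $\ind$ indeed satisfies the ``leastness" requirement of Definition \ref{1-strongwitnessingproperty} for \emph{every} small restriction $x\subseteq\mathbb{X}$, but this is immediate from the form of the relative Kim's lemma, which is stated uniformly over all types $p$ over $M$.
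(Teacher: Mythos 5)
Your proposal is correct and is precisely the argument the paper intends: the paper leaves this corollary's proof implicit, relying on the remark following Definition \ref{1-strongwitnessingproperty} (the relative Kim's lemma makes the type $\mathbb{P}(\mathbb{X})$ built from $\ind$ by compactness a witness to the strong witnessing property), on Theorem \ref{1-relativensop1} together with the corollary identifying symmetric $\ind^{K^{\ind}}$ with Conant-independence, and then on the two preceding dichotomy theorems, exactly as you combine them. Your added check that every small restriction $\mathbb{P}(\mathbb{X})|_{x}$ is the type $p^{*}$ of Fact \ref{1-morleysequencebasics}(1), hence least in the Kim-dividing order, is the right point to verify and matches the paper.
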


\begin{remark}
Under the hypotheses of the corollary, we get a ``base monotone" version of this result (see section 2 of \cite{KR18} for related results on ``base monotone" versions of independence). Namely, if $M'\ind^{K^{*}}_{M} a$ and $M' \ind^{K^{*}}_{M} b$ and $a \ind_{M'} b $, then $a \ind_{M} b$. This is proven by applying the generalized freedom axiom rather than the chain condition in the proof of the ``weak independence theorem" analogue, and then applying stationarity. When in addition $\ind^{K^{*}}=\ind^{a}$, note the resemblance to the case of the freedom axiom where $C=M'$, $D = M$ are models and $C \cap AB \subseteq C \cap\mathrm{acl}(AD)\mathrm{acl}(BD)=D \subseteq C $.
\end{remark}

\section{Non-modular free amalgamation theories}

The following property of relations $\ind$ between sets follows from the ``full transitivity" from section 2:

\begin{definition}
The relation $\ind$ has \textit{base monotonicity} if $A \ind_B C$ and $B \subseteq D \subseteq C$ implies $A \ind_D C$.
\end{definition}

This is Proposition 8.8 of \cite{KR17}: 

\begin{fact}\label{1-basemonotone}
An $\mathrm{NSOP}_{1}$ theory is simple if and only if Kim-independence satisfies base monotonicity for $B = M \prec M' =D $ models.
\end{fact}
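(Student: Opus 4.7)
The plan is to prove both directions, treating the forward direction as a near-direct consequence of known characterizations of simplicity and treating the backward direction as an application of the Kim--Pillay theorem.

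For the forward implication, I would begin by assuming $T$ is simple (and, by hypothesis, NSOP$_{1}$). Kim's lemma for dividing in simple theories says that if a formula divides over a model $M$, then every $M$-indiscernible sequence witnesses this dividing; in particular, every invariant Morley sequence does. Combined with the obvious fact that Kim-dividing implies dividing (an invariant Morley sequence is an indiscernible sequence), this gives $\ind^{K} = \ind^{f}$ over models. Since forking-independence in simple theories satisfies base monotonicity (a consequence of full transitivity of $\ind^{f}$), $\ind^{K}$ inherits base monotonicity for $B = M \prec M' = D$.

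For the backward implication, which is the more substantive direction, I would assemble the axioms that $\ind^{K}$ is already known to satisfy in any NSOP$_{1}$ theory from the Kaplan--Ramsey development: invariance, monotonicity, finite character, existence, extension, symmetry (Fact~\ref{1-symm}), the chain condition, and the independence theorem over models. Adding base monotonicity over models --- the missing axiom --- yields an independence relation satisfying all of the Kim--Pillay-style axioms characterizing forking in a simple theory. Invoking the Kim--Pillay characterization of simplicity, one concludes that $T$ is simple and that $\ind^{K}$ coincides with non-forking over models.

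The main obstacle, and where some genuine work is needed, is to obtain \emph{local character} for $\ind^{K}$ from base monotonicity combined with the other known properties, since the Kim--Pillay theorem requires it. One route is to argue by contradiction: if local character fails, one can construct, using the chain condition and symmetry together with base monotonicity, a long decreasing chain of Kim-forking extensions refining into a Morley sequence witnessing a tree-property-like configuration incompatible with NSOP$_{1}$. Alternatively, one can appeal to Chernikov--Kaplan-style arguments, where base monotonicity for $\ind^{K}$ upgrades the symmetry of Kim-independence in NSOP$_{1}$ to the symmetry of \emph{dividing}, which is equivalent to simplicity by definition. Either way, once simplicity is established the equality $\ind^{K} = \ind^{f}$ follows automatically from Kim's lemma for dividing, closing the equivalence.
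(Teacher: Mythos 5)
Your forward direction is essentially correct, though note that Kim's lemma for simple theories says every \emph{Morley sequence} (nonforking indiscernible sequence) in $\mathrm{tp}(b/M)$ witnesses dividing, not every $M$-indiscernible sequence as you wrote; the corollary you actually use (invariant Morley sequences witness dividing, hence $\ind^{K}=\ind^{f}$ over models, and forking has base monotonicity in any theory) is the right one. The paper itself gives no proof of this Fact: it is quoted as Proposition 8.8 of \cite{KR17}, and the nontrivial direction there is proved by \emph{contrapositive}, not by Kim--Pillay. If $T$ is $\mathrm{NSOP}_{1}$ and not simple, then $T$ has $\mathrm{TP}_{2}$ (it has the tree property, and $\mathrm{NSOP}_{1}$ rules out $\mathrm{TP}_{1}$); from a mutually indiscernible array witnessing $\mathrm{TP}_{2}$, after Skolemizing, one directly constructs $M \prec N$ and tuples $a, b$ with $a \ind^{K}_{M} Nb$ (path consistency together with the chain condition along the columns), while the row through $b$ is an $N$-finitely satisfiable Morley sequence witnessing Kim-dividing of $\varphi(x,b)$ over $N$, so $a \nind^{K}_{N} b$ --- an explicit failure of base monotonicity requiring no abstract characterization theorem.

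Your backward direction, by contrast, has a genuine gap, and not quite where you locate it. First, the Kim--Pillay theorem as standardly stated requires an independence relation over arbitrary sets, with local character in the form ``for every finite $a$ and set $B$ there is $B_{0} \subseteq B$ with $|B_{0}| \leq |T|$ and $a \ind_{B_{0}} B$''; Kim-independence is only defined over models, so you would need to cite or prove a models-only criterion for simplicity, which you do not. Second, the ``main obstacle'' you identify --- local character --- is in the over-models sense already a theorem in \emph{every} $\mathrm{NSOP}_{1}$ theory, with no base monotonicity hypothesis (this is the dual/local character of \cite{KRS19}, cited in this paper), so it is not where base monotonicity does its work; what is genuinely missing from your axiom list is \emph{transitivity} of $\ind^{K}$, which any Kim--Pillay-style argument needs and which is itself a substantial theorem of Kaplan and Ramsey, proved in a separate paper. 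Third, neither of your fallback sketches is viable as stated: a ``tree-property-like configuration'' is not incompatible with $\mathrm{NSOP}_{1}$ --- every strictly $\mathrm{NSOP}_{1}$ theory has $\mathrm{TP}_{2}$ --- so you would need to extract specifically an $\mathrm{SOP}_{1}$-configuration, and the claim that base monotonicity ``upgrades symmetry of Kim-independence to symmetry of dividing'' is exactly the statement to be proved (the paper \emph{defines} simplicity as symmetry of $\ind^{div}$), offered here without argument. The moral of the cited proof is that the equivalence is cheapest through the $\mathrm{TP}_{2}$ array construction, which bypasses all of this machinery.
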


Conant asks (\cite{Co15}, Question 7.19) if any free amalgamation theory is \textit{modular}:

\begin{definition}
A theory is \emph{modular} if $\ind^{a}$ has base monotonicity. 
\end{definition}

Answering this question, we give an example of a nonmodular free amalgamation theory. Kruckman and Ramsey (\cite{KR18}) show that the empty theory in a language with a binary function symbol $f(x, y)$ has a strictly $\mathrm{NSOP}_{1}$ model completion, where Kim-independence coincides with $\ind^{a}$ and algebraic closure coincides with closure under $f(x, y)$. This theory is therefore non-modular. To show that forking coincides with dividing for complete types over models, they introduce a ``free amalgamation" relation that we expect to satisfy the generalized freedom axiom (see example \ref{1-strongindependence}). However, the ``free  amalgamation" from \cite{KR18} is not a free amalgamation relation in the sense of \cite{Co15}. We define a nonstandard relation $\ind$, which will satisfy the free amalgamation axioms. Let $T$ be the model completion of the empty theory in the language with a binary function symbol $f$, with an additional constant symbol $c$. Define $A \ind_{C} B$ to mean $A \cap B \subseteq C$ and, for $a \in A \backslash C$ and $b \in B \backslash C$, $f(b,a)=f(a, b) = c$. We show that $\ind$ is a free amalgamation relation. Invariance, monotonicity, and full transitivity are straightfoward. For full existence, we can enlarge $A, B$ to their algebraic closure with $C$, which we assume is algebraically closed. We can easily find a structure in the language extending $C$ where $A$ and $B$ embed disjointly over $C$, and where a point with coordinates properly in each of $A$ and $B$ will have image $c$. Full existence then follows from the fact that $T$ is the model completion. If $C \subseteq A \cap B$, $A, B, C$ algebraically closed, then $A \ind_{C} B$ determines the isomorphism type of $AB$ over $C$, so stationarity follows from quantifier elimination. For freedom, if $A \ind_{C} B$ and $C \cap AB \subseteq D \subseteq C$, then $A\cap B \subseteq C \cap AB \subseteq D$, while for $a \in A\backslash D \subseteq A \backslash C$, $b \in B\backslash D \subseteq B \backslash C$, $f(a,b) = f(b, a)=c$ as before. Finally, if $A \ind_{C} B$ for $C \subseteq A \cap B$ and $A, B, C$ algebraically closed, then the closure under $f(x, y)$, and therefore the algebraic closure, of $AB$ remains $AB$, yielding the closure axiom.

The existence of non-modular free amalgamation theories motivates the following generalization of Theorem 7.17 of \cite{Co15} that modular free amalgamation theories are either simple or $\mathrm{SOP}_{3}$:

\begin{theorem}\label{1-freeamalgamationnsop1sop3}
Free amalgamation theories are either $\mathrm{NSOP}_{1}$ or $\mathrm{SOP}_{3}$.
\end{theorem}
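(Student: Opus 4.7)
The plan is to apply the Corollary at the end of Section 3, which yields the $\mathrm{NSOP}_{1}$--$\mathrm{SOP}_{3}$ dichotomy for any theory carrying an independence relation $\ind$ satisfying both the generalized freedom axiom and the relative Kim's lemma. It therefore suffices to show that the free amalgamation relation of a free amalgamation theory, restricted to model bases, verifies these two conditions.

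First, I would check that $\ind$ fits the framework of Section 3 when restricted to model bases, and then verify the generalized freedom axiom. Since models are algebraically closed, Conant's stationarity directly yields stationarity over models for algebraically closed tuples, and full stationarity for arbitrary tuples follows by passing to algebraic closures via monotonicity (analogously to Example~\ref{1-notfullystationary}); invariance, monotonicity, and full existence transfer directly. Because the freedom axiom is part of the definition, Remark~\ref{1-freedomgeneralizedfreedom} then supplies the generalized freedom axiom.

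Next, I would establish the relative Kim's lemma by showing the stronger claim $\ind^{K^{\ind}} = \ind^{a}$ in a free amalgamation theory (Proposition~\ref{1-trivialconantindependence}, generalizing the Example following Theorem~\ref{1-relativensop1}); symmetry of $\ind^{K^{\ind}}$ then follows from symmetry of $\ind^{a}$, and the $(\Leftarrow)$ direction of Theorem~\ref{1-relativensop1} delivers the relative Kim's lemma. The direction $\ind^{a} \Rightarrow \ind^{K^{\ind}}$ is Lemma~7.6 of \cite{Co15}: if $a \ind^{a}_{M} b$ and $\varphi(x, b) \in \mathrm{tp}(a/Mb)$, then $a$ is a realization of $\varphi(x,b)$ that is $\ind^{a}$-independent from $b$ over $M$, so $\ind$-Morley sequences cannot witness Kim-dividing of $\varphi(x, b)$. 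For $\ind^{K^{\ind}} \Rightarrow \ind^{a}$, given $a \nind^{a}_{M} b$ with common element $c \in \mathrm{acl}(Ma) \cap \mathrm{acl}(Mb) \setminus M$ witnessed by algebraic formulas $\theta_{1}(z, a)$ and $\theta_{2}(z, b)$, one constructs a formula $\varphi(x, b) \in \mathrm{tp}(a/Mb)$ from these that $\ind$-Kim-divides: along an $\ind$-Morley sequence $\{b_{i}\}$, Fact~\ref{1-morleysequencebasics}.3 forces $\mathrm{acl}(Mb_{i}) \cap \mathrm{acl}(Mb_{j}) = M$ for $i \neq j$, while a pigeonhole on the finitely many candidate witnesses in $\mathrm{acl}(Mx)$ available for each $x$ produces an inconsistency.

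The main obstacle is Proposition~\ref{1-trivialconantindependence}: the converse direction $\ind^{K^{\ind}} \Rightarrow \ind^{a}$ in particular requires carefully packaging the shared non-$M$ algebraic element into an explicit first-order $\ind$-Kim-dividing formula even when $\mathrm{acl}$ is not disintegrated. Once $\ind^{K^{\ind}} = \ind^{a}$ is in place, symmetry of $\ind^{K^{\ind}}$, the relative Kim's lemma via Theorem~\ref{1-relativensop1}, and the $\mathrm{NSOP}_{1}$--$\mathrm{SOP}_{3}$ dichotomy via the Corollary all follow immediately.
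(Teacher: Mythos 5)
Your outline is sound, but it takes a genuinely different route from the paper, and one step as written has a gap. The paper never places the free amalgamation relation inside the Section 3 framework at all: it proves Proposition \ref{1-trivialconantindependence} directly (Conant-independence $\ind^{K^{*}}$ coincides with $\ind^{a}$ over models, via Lemma 7.6 of \cite{Co15} together with Fact \ref{1-morleysequencebasics}), verifies the strong witnessing property of Definition \ref{1-strongwitnessingproperty} by hand --- taking $\mathbb{P}(\mathbb{X})$ with $\mathrm{acl}(a) \ind_{M} \mathbb{M}$, obtained by compactness from stationarity and monotonicity of Conant's relation on \emph{algebraically closed} tuples --- and then applies the unnumbered theorem that strong witnessing plus symmetric Conant-independence implies $\mathrm{NSOP}_{1}$ or $\mathrm{SOP}_{3}$, symmetry coming for free from $\ind^{K^{*}}=\ind^{a}$. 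This deliberately sidesteps the two delicate points your route must confront: full stationarity over models for arbitrary tuples, and the generalized freedom axiom for the closure-modified relation. Your route, by contrast, exhibits free amalgamation theories as ``relatively $\mathrm{NSOP}_{1}$'' in the sense of Theorem \ref{1-relativensop1} and gets both dichotomies from the Corollary at once, which is a nice conceptual gain --- and your pigeonhole argument for $\ind^{K^{\ind}} \Rightarrow \ind^{a}$ is correct and in fact supplies the detail behind the paper's ``Clearly Conant-independence implies $\ind^{a}$'' (do note you must first conjoin to $\theta_{2}(z,b)$ inequations excluding its finitely many realizations in $M$, with parameters still in $Mb$; this exclusion is preserved under $M$-conjugation, so the coinciding witness produced by pigeonhole lands in $\mathrm{acl}(Mb_{i}) \cap \mathrm{acl}(Mb_{j}) \setminus M$, contradicting Fact \ref{1-morleysequencebasics}.3).

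The gap is your appeal to Remark \ref{1-freedomgeneralizedfreedom}. Once you pass to the fully stationary relation $A \ind^{\mathrm{cl}}_{M} B$ defined by $\mathrm{acl}(AM) \ind_{M} \mathrm{acl}(BM)$, the freedom axiom is no longer ``part of the definition'': freedom for $\ind^{\mathrm{cl}}$ does not follow from freedom for $\ind$, and the instance the Remark's proof needs fails outright --- there one has $A' = \mathrm{acl}(M'a_{n+1})$ and $B' = \mathrm{acl}(M'a_{0}\cdots a_{n})$ both \emph{containing} $C = M'$, so the side condition $C \cap A'B' \subseteq D = M$ is violated. The repair is to rerun the Remark's induction (i.e., the proof of Lemma 7.6 of \cite{Co15}) at the level of $\ind$ itself: from $a_{n+1} \ind^{\mathrm{cl}}_{M'} a_{0}\cdots a_{n}$ pass by monotonicity to $\mathrm{acl}(Ma_{n+1}) \ind_{M'} \mathrm{acl}(Ma_{0}\cdots a_{n})$; the hypothesis of the generalized freedom axiom yields $M' \cap \mathrm{acl}(Ma_{n+1}) = M$ (via $M' \ind^{a}_{M} a$ and an automorphism over $M'$), and one maintains $M' \cap \mathrm{acl}(Ma_{0}\cdots a_{n}) = M$ inductively using the \emph{closure} axiom, which gives $\mathrm{acl}(Ma_{0}\cdots a_{n+1}) = \mathrm{acl}(Ma_{n+1}) \cup \mathrm{acl}(Ma_{0}\cdots a_{n})$ once freedom has applied at the previous stage; then freedom with base $D = M$ applies legitimately. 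This is exactly the maneuver the paper carries out in the special case of Example \ref{1-notfullystationary}, which is why that example verifies the generalized freedom axiom directly rather than citing the Remark. With that fix, your argument goes through.
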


First, we observe that, justifying the terminology, Lemma 7.6 of Conant in \cite{Co15} is essentially a characterization of Conant-independence:

\begin{proposition}\label{1-trivialconantindependence}
Conant-independence in free amalgamation theories coincides with $\ind^{a}$ over models.
\end{proposition}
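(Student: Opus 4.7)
The plan is to prove the two inclusions $\ind^{a} \Rightarrow \ind^{K^{*}}$ and $\ind^{K^{*}} \Rightarrow \ind^{a}$ separately over a model $M$, using Lemma 7.6 of \cite{Co15} for the first direction and an algebraic-witness pigeonhole argument for the second.

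For $\ind^{a} \Rightarrow \ind^{K^{*}}$, I would fix $a \ind^{a}_{M} b$ and any $\varphi(x,b) \in \mathrm{tp}(a/Mb)$, and invoke Lemma 7.6 of \cite{Co15}, quoted in the Introduction in the form ``Morley sequences in a free amalgamation relation can only witness dividing when $\nind^{a}_{C}$ fails''. This produces an $\ind$-Morley sequence starting with $b$ along which $\varphi(x,b)$ is consistent. By Fact \ref{1-morleysequencebasics}(2) this sequence is an invariant Morley sequence, so it witnesses that $\varphi(x,b)$ does not Conant-divide over $M$, and hence $a \ind^{K^{*}}_{M} b$.

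For the converse I would argue the contrapositive, starting from $c \in \mathrm{acl}(aM) \cap \mathrm{acl}(bM) \setminus M$ and manufacturing a Conant-dividing formula in $\mathrm{tp}(a/Mb)$. Concretely, choose an algebraic formula $\theta(y,x)$ over $M$ (absorbing $M$-parameters into $x$) with $\theta(c,a)$ holding and a uniform bound $|\theta(y,x)^{\mathbb{M}}| \leq K$, together with an algebraic $\psi(y,b)$ over $M$ isolating the $Mb$-orbit of $c$ so that every solution of $\psi(y,b)$ lies in $\mathrm{acl}(bM) \setminus M$; the last condition is automatic because $c \notin M$ and $M \prec \mathbb{M}$, so no $Mb$-conjugate of $c$ can land in $M$. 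Then $\varphi(x,b) := \exists y\,(\theta(y,x) \wedge \psi(y,b))$ is a formula in $\mathrm{tp}(a/Mb)$.

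The heart of the argument is showing that this $\varphi$ Conant-divides over $M$. Given any invariant Morley sequence $\{b_{i}\}_{i<\omega}$ starting with $b$, the standard implication $\ind^{i} \Rightarrow \ind^{a}$ (used in the proof of Fact \ref{1-morleysequencebasics}(3)) applied to $b_{j} \ind^{i}_{M} b_{<j}$ yields $\mathrm{acl}(b_{i}M) \cap \mathrm{acl}(b_{j}M) = M$ for $i \neq j$. A putative realization $a^{*}$ of $\{\varphi(x,b_{i})\}_{i<\omega}$ supplies witnesses $c_{i}$ satisfying $\theta(c_{i},a^{*}) \wedge \psi(c_{i},b_{i})$; since $|\theta(y,a^{*})^{\mathbb{M}}| \leq K$ but the indexing set has size $\omega > K$, pigeonhole forces $c_{i} = c_{j}$ for some $i \neq j$, simultaneously placing this common element in $\mathrm{acl}(b_{i}M) \cap \mathrm{acl}(b_{j}M) = M$ and, by the choice of $\psi$, outside $M$, a contradiction. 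Hence $\{\varphi(x,b_{i})\}_{i<\omega}$ is inconsistent, $\varphi$ Conant-divides over $M$, and $a \nind^{K^{*}}_{M} b$. The main obstacle is the bookkeeping around $M$-parameters, particularly the orbit-isolating choice of $\psi$ guaranteeing that no solution meets $M$ and the uniform algebraicity of $\theta$; once $\varphi$ is set up correctly, the pigeonhole step is immediate and the first direction is a verbatim application of Conant's lemma.
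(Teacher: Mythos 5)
Your proof is correct and takes essentially the same route as the paper: the direction $\ind^{a} \Rightarrow \ind^{K^{*}}$ is exactly the paper's appeal to Lemma 7.6 of \cite{Co15} (though the paper first passes, via extension for $\ind^{a}$, to algebraically closed $a, b$ containing $M$, since that lemma is stated for algebraically closed sets---a reduction you should make explicit), while the converse direction is dismissed in the paper as ``clear.'' Your pigeonhole argument with the algebraic formulas $\theta$ and $\psi$ is a sound writing-out of that ``clear'' direction, correctly using $\ind^{i} \Rightarrow \ind^{a}$ to get $\mathrm{acl}(Mb_{i}) \cap \mathrm{acl}(Mb_{j}) = M$ along an arbitrary invariant Morley sequence.
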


\begin{proof}
Clearly Conant-independence implies $\ind^{a}$. Conversely, suppose $a \ind^{a}_{M} b$. By extension for $\ind^{a}$, we can assume that $a$ and $b$ are algebraically closed sets containing $M$. But then by Lemma 7.6 of \cite{Co15}, there will be an $\ind$-Morley sequence over $M$ (extending Definition \ref{1-morleysequence} appropriately) starting with $b$ and indiscernible over $Ma$, so $a \ind^{K^{*}}_{C} b$.
\end{proof}

We also see that free amalgamation theories satisfy the strong witnessing property (Definition \ref{1-strongwitnessingproperty}). Namely for $M \prec \mathbb{M}$, let $\mathbb{P}(\mathbb{X})$ be the type extending $\mathrm{tp}(\mathbb{M}/M)$ so that for $x \subset \mathbb{X}$, $B \subset \mathbb{M}$ and $a \models (\mathbb{P}(\mathbb{X})|_{x})|_{MB}$, we have $\mathrm{acl}(a) \ind_{M} \mathbb{M}$. We can get this $\mathbb{P}(\mathbb{X})$ by compactness from stationarity and monotonicity of $\ind$. It suffices to show that, for $x \subseteq \mathbb{X}$, $\mathbb{P}(\mathbb{X})|_{x}$ is least in the Kim-dividing order among $M$-invariant extensions of $(\mathbb{P}(\mathbb{X})|_{x})|_{M}$. In other words, let $\varphi(x, b)$ be a formula not Conant-dividing over $M$, with $b \models (\mathbb{P}(\mathbb{X})|_{x})|_{M}$; we must show that invariant Morley sequences over $M$ in $\mathbb{P}(\mathbb{X})|_{x}$ starting with $b$ do not witness dividing of $\varphi(x, b)$. But invariant Morley sequences over $M$ in $\mathbb{P}(\mathbb{X})|_{x}$ starting with $b$ are just sequences $\{b_{i}\}_{i < \omega}$ with $b_{0} = b$ and $\mathrm{acl}(b_{i}) \ind_{M} \mathrm{acl}(b_{<i})$. So we just need to show that such a sequence does not witness dividing of $\varphi(x, b)$. But $\varphi(x, b)$, because it does not Conant-divide over $M$, must have a realization $a$ with $a \ind^{a}_{M}b$. So then we can proceed as in the above proof to show that sequences $\{b_{i}\}_{i < \omega}$ with $b_{0} = b$ and $\mathrm{acl}(b_{i}) \ind_{M} \mathrm{acl}(b_{<i})$ do not witness dividing of $\varphi(x, b)$.

Then, Theorem \ref{1-freeamalgamationnsop1sop3} follows from Theorem \ref{1-classification}.

If a free amalgamation theory $T$ is $\mathrm{NSOP}_{1}$, then by Fact \ref{1-kimslemma} and Proposition \ref{1-trivialconantindependence}, $\ind^{K}=\ind^{K^{*}}=\ind^{a}$. Thus the characterization of simple theories within the class $\mathrm{NSOP}_{1}$ in Fact \ref{1-basemonotone} gives us the following, extending Conant's result \cite{Co15} that a simple free amalgamation theory is modular:

\begin{proposition}
An $\mathrm{NSOP}_{1}$ free amalgamation theory is modular if and only if it is simple.
\end{proposition}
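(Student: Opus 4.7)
The plan is to exploit the chain of identifications $\ind^{K} = \ind^{K^{*}} = \ind^{a}$ established in the paragraph immediately preceding the proposition, valid in any $\mathrm{NSOP}_{1}$ free amalgamation theory (using Fact \ref{1-kimslemma} to equate Kim-dividing with Conant-dividing in $\mathrm{NSOP}_{1}$ theories, together with Proposition \ref{1-trivialconantindependence} to equate Conant-independence with $\ind^{a}$ in free amalgamation theories). The implication from simple to modular is Conant's original theorem from \cite{Co15}, so the new content is the converse: modularity forces simplicity.

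For this direction, suppose $T$ is a modular $\mathrm{NSOP}_{1}$ free amalgamation theory. By definition of modularity, $\ind^{a}$ has base monotonicity in full generality; specialized to the case where the base $B = M$ and the intermediate set $D = M'$ are models with $M \prec M'$, this yields base monotonicity for $\ind^{a}$ between models. Invoking the identification $\ind^{K} = \ind^{a}$ in this setting, we conclude that $\ind^{K}$ satisfies base monotonicity for models, so by Fact \ref{1-basemonotone} the theory $T$ is simple.

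For completeness, the reverse direction can also be recovered directly from the identification: if $T$ is simple and $\mathrm{NSOP}_{1}$, then Kim-independence $\ind^{K}$ coincides with forking-independence $\ind^{f}$, which in a simple theory is known to satisfy base monotonicity over arbitrary sets. Via $\ind^{a} = \ind^{K} = \ind^{f}$, this base monotonicity transfers to $\ind^{a}$, which is exactly modularity.

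There is no substantive obstacle in this argument, as the real work has already been carried out in establishing that $\ind^{K}$, $\ind^{K^{*}}$, and $\ind^{a}$ all coincide in our setting: once these three relations are identified, modularity (a property of $\ind^{a}$) and simplicity (detectable from base monotonicity of $\ind^{K}$ by Fact \ref{1-basemonotone}) become two ways of asserting the same base monotonicity property of a single relation.
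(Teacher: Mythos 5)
Your proof is correct and follows essentially the same route as the paper: the identification $\ind^{K}=\ind^{K^{*}}=\ind^{a}$ over models (Fact \ref{1-kimslemma} plus Proposition \ref{1-trivialconantindependence}) combined with Fact \ref{1-basemonotone} gives modular $\Rightarrow$ simple, and simple $\Rightarrow$ modular is quoted from Conant's theorem in \cite{Co15}, exactly as the paper does.

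One caveat on your ``for completeness'' paragraph: that rederivation of simple $\Rightarrow$ modular does not go through as stated. Proposition \ref{1-trivialconantindependence} identifies $\ind^{K^{*}}$ with $\ind^{a}$ \emph{over models} only, and $\ind^{K}$ is likewise only defined over models here, whereas modularity demands base monotonicity of $\ind^{a}$ over arbitrary sets $B \subseteq D \subseteq C$; base monotonicity with model bases does not yield this. Moreover, even in a simple theory $\ind^{a}$ need not coincide with forking-independence over arbitrary sets --- establishing that coincidence over closed sets in free amalgamation theories is part of the content of Conant's result itself --- so the chain $\ind^{a}=\ind^{K}=\ind^{f}$ cannot be invoked in that generality. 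Since your primary argument for this direction is the citation of Conant's theorem (which is also what the paper relies on), the proof stands; the supplementary paragraph should simply be deleted.
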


\section{Some examples}

We consider two examples of theories with relations satisfying the assumptions at the beginning of Section 3, as well as the generalized freedom axiom and the relative Kim's lemma. We characterize Conant-independence in these structures. Our purposes are twofold: to give a model-theoretic interpretation of certain tame independence relations in potentially strictly $\mathrm{NSOP}_{4}$ theories, and to extend the concept of free amalgamation to examples not covered by Conant's work in \cite{Co15}.

\begin{example}
(Finite-language countably categorical Hrushovski constructions.) We consider the case of the examples of $\omega$-categorical structure with a predimension introduced in section 3 of \cite{Ev02}, which is developed in \cite{EW09}. Let $\mathcal{L}$ be a language with finitely many relations (\cite{EW09} only require finitely many relations of each arity, but we include this requirement so that the predimension function only takes a discrete set of values), and for each relation symbol $R_{i}$, let $\alpha_{i}$ be a non-negative real number associated to $R_{i}$. For $A$ a finite $\mathcal{L}$-structure, define a \textit{predimension} $d_{0}(A)=|A| - \sum_{i} \alpha_{i} |R_{i}(A)|$, with $R_{i}(A)$ the set of tuples of $R_{i}$ with elements of $A$. Define the relation $A \leq B$ for $B$ any $\mathcal{L}$-structure and $A$ a finite substructure of $B$, to mean that every finite superstructure of $A$ within $B$ has predimension greater than $A$. Let $f$ be an increasing continuous positive real-valued function and let $\mathcal{C}_{f}$ be the class of finite $\mathcal{L}$-structures any substructure $A$ of which satifies $d_{0}(A) \geq f(A)$. Assume that, if $B_{1} \geq A \leq B_{2}$ belong to $\mathcal{C}_{f}$, then their evident ``free amalgamation," by taking their disjoint union over $A$ and adding no new edges, likewise belongs to $\mathcal{C}_{f}$. Then there is an $\mathcal{L}$-structure $M_{0}$ so that every finite substructure of $M_{0}$ belongs to $\mathcal{C}_{f}$, and so that if $B \geq A \leq M_{0}$ with $B$ finite, then there is an embedding $\iota: B \to M_{0}$ over $A$ so that $\iota(B) \leq M_{0}$. Let $T$ be the theory of $M_{0}$. The theory $T$ is $\omega$-categorical, so has bounded algebraic closure, and isomorphic algebraically closed sets  satisfy the same complete type. For $M$ a model of $T$, and $A \subseteq B \subseteq M$ with $A$ finite and $B$ any set, $A$ is algebraically closed in $B$ if and only if $A \leq B$, and $M$ will always continue to have the property that if $B \geq A \leq M$ with $B$ finite, then there is an embedding $\iota: B \to M$ over $A$ so that $\iota(B) \leq M$.

Though $T$ is not necessarily simple, \cite{EW09} show that it is either strictly $\mathrm{NSOP}_{4}$ or simple. However, it does have a natural notion of independence, even in the strictly $\mathrm{NSOP}_{4}$ case. This notion of independence, called $d$-independence, is defined in \cite{Ev02}; it will coincide with forking-independence in the simple case. For finite $A, B$, denote $d(A/B)=d_{0}(\mathrm{acl}(AB))-d_{0}(\mathrm{acl}(B))$ (recalling the bounded algebraic closure). This notion of relative dimension has a natural extension over infinite sets: for $A$ a finite set and $B$ any set, denote $d(A/B)=\min(\{d(A/B_{0}): B_{0} \subseteq B \: \mathrm{finite}\})$. We use the following notation for the relation referred to in \cite{Ev02} as \textit{$d$-independence}: for $a$ finite and $B, C$ any sets, let $a\ind^{d}_{B} C$ if and only if $d(a/BC)=d(a/B)$ and $\mathrm{acl}(aB) \cap \mathrm{acl}(CB)=\mathrm{acl}(B)$; for $a$, $B$, $C$ finite this last condition will be redundant. In \cite{Ev02} it is shown that $\ind^{d}$ has finite character and is symmetric, monotone and fully transitive where defined, so it extends naturally to a relation defined for $a$ possibly infinite with the same properties. We claim that there is a natural relation $\ind$ satisfying the assumptions at the beginning of section 3 as well as the generalized freedom axiom and the relative Kim's lemma, and that Conant-independence coincides with $\ind^{d}$ (so is in particular, symmetric).

We first observe a variant of property (P5) of \cite{Ev02} which, in place of a finitary analogue of the ``independence theorem" holding only in the simple examples, constitutes a base-monotone version of the ``weak independence theorem" with respect to free amalgamation analogous to those in \cite{KR17} with respect to coheir-independence or \cite{NSOP2} with respect to canonical coheirs. This is used implicitly in \cite{EW09} to show $\mathrm{NSOP}_{4}$, but we provide some justification.

(P5$'$) Let $B_{1} \geq A \leq B_{2}$ be finite algebraically closed sets such that $B_{1}$ and $B_{2}$ are \textit{freely amalgamated} over $A$, which is to say $\mathrm{acl}(B_{1}B_{2})$ is the disjoint union of $B_{1}$ and $B_{2}$ over $A$ with no new relations. Let $c_{1}$, $c_{2}$ be finite with $c_{1} \ind^{d}_{A} B_{1}$, $c_{1} \ind^{d}_{A} B_{2}$ and $c_{1} \equiv_{A} c_{2}$; then there is some $c$ realizing $\mathrm{tp}(c_{1}/B_{1}) \cup \mathrm{tp}(c_{2}/B_{2})$--with $c \ind^{d}_{A} B_{1}B_{2}$ (which is not needed here)--and $\mathrm{acl}(cB_{1})$ and $\mathrm{acl}(cB_{2})$ freely amalgamated over $\mathrm{acl}(cA)$.

When we only require that $B_{1} \ind^{d} B_{2}$ rather than that they be freely amalgamated, this is shown in Theorem 3.6(ii) of \cite{Ev02} under assumptions on $f$, so we need only observe that this proof works for this partial result without the assumptions on $f$. As in that proof we can form the $\mathcal{L}$-structure $F=E_{12} \cup E_{13} \cup E_{23}$ with no new relations, and with compatible isomorphisms $\varphi_{12}: \mathrm{acl}(B_{1}B_{2}) \to E_{12}$, $\varphi_{j3}: \mathrm{acl}(c_{j}B_{j}) \to E_{j3}$, which will be a special case of the construction from that proof where the ``underlying" predimension $y$ is just the cardinality. Now by point (i) of that proof, which does not use the additional assumption on $f$ required for simplicity, $E_{ij} \leq E$. The part of the proof where this additional assumption is required is point (ii), where it is shown that $F \in \mathcal{C}_{f}$; it must be shown that for each $D \subseteq F$, $d_{0}(D) \geq f(|D|)$. However, the assumption on $f$ is only used when $D$ is not contained in the union of two of the $E_{ij}$ (where the requirement follows by closure under free amalgamation). But $F = E_{13} \cup E_{23}$ because $\mathrm{acl}(B_{1}B_{2})=B_{1} \cup B_{2}$. So embedding a copy of $F$ over $B_{1}B_{2}$ (where $B_{1}B_{2}$ is identified by its image in $E_{12} \subset F$) so that it is algebraically closed will realize both types, and in a $d$-independent way by point (iii), which does not rely on the additional assumptions on $f$.

Now note that for $B_{1} \geq A \leq B_{2}$ algebraically closed finite sets and $c$ any finite set with $\mathrm{acl}(cB_{1})$ and $\mathrm{acl}(cB_{2})$ freely amalgamated over $\mathrm{acl}(cA)$, the type of $cB_{1}$ and $cB_{2}$ then completely determine the type of $cB_{1}B_{2}$ and in particular $B_{1} B_{2}$, so (P5$'$) implies that $B_{1}$ and $B_{2}$ are freely amalgamated over $A$. This observation leads to the following definition: for $M$ a model and $b, c$ finite sets of parameters, say $a \ind_{M} b$ if for any finite $A \leq M$ with $a \ind^{d}_{A} M$, $b \ind^{d}_{A} M$ (such an $A$ always exists because $d_{0}$ only takes a discrete set of values; see Lemma 2.17(a)(ii) of \cite{Ev02}), $\mathrm{acl}(aA)$ and $\mathrm{acl}(bA)$ are freely amalgamated over $A$. For existence, by compactness, it suffices to show that for types $p(x)$ and $q(y)$ over $M$, finitely many finite $A_{i} \leq M$ such that $p(x)$ and $q(x)$ $d$-independently extend their restrictions to $A_{i}$, and finite $B \subseteq M$, there are realizations $a$ of $p(x)|_{B}$ and $b$ of $q(y)|_{B}$ so that $\mathrm{acl}(aA_{i})$ and $\mathrm{acl}(bA_{i})$ are freely amalgamated over $A_{i}$ for each $i$. But take any $A \leq M$ containing each of the $A_{i}$ and $B$ and take realizations $a$ of $p(x)|_{A}$ and $b$ of $q(y)|_{A}$ so that $\mathrm{acl}(aA)$ and $\mathrm{acl}(bA)$ are freely amalgamated over $A$; then the free amalgamation conditions over the $A_{i}$, by the observation at the beginning of this paragraph, will be satisfied. By the quantifier elimination, $\ind$ is clearly stationary where it is defined. The relation $\ind$ is also monotone where defined, by the properties of free amalgamation for finite sets. So it extends to a relation $a\ind_{M} b$ for $a$, $b$ potentially infinite.

We next show that  $\ind^{K^{\ind}}$ implies $\ind^{d}$: The proof from \cite{Ev02} that dividing-independence implies $\ind^{d}$ (Lemma 2.19 (a) of \cite{Ev02}) cites Claim 1 of Theorem 4.2 of \cite{KP99}, whose proof will tell us than any $\ind^{d}$-independent sequence, such as $\ind$, witnesses dividing. For the convenience of the reader, we will recapitulate the argument. (Note that the argument of \cite{KP99} uses ``local character" which will be implicit in our argument.) Suppose $a \nind_{M}^{d} b$. We show that $a\nind_{M}^{K^{\ind}}b$; by finite character of $\ind^{d}$ we may assume $a$ is finite. To show $a\nind_{M}^{K^{\ind}}b$ it suffices by Proposition \ref{1-relativeforkingdividing} to show that, for $\{b_{i}\}_{i < \kappa}$ a long $\ind$-Morley sequence starting with $b$, there is no $a'$ so that $a'b_{i} \equiv_{M} ab$ for $i < \kappa$. Suppose such an $a'$ existed. The relation $\ind$, by definition, implies $\ind^{d}$. Because $\{b_{i}\}_{i < \kappa}$ is an $\ind$-Morley sequence over $M$, it has the property that $b_i \ind^{d}_{M} b_{<i}$ for $i < \kappa$. So by full transitivity and symmetry for $\ind^{d}$, $a' \nind_{Mb_{<i}}^{d} b_{i}$ for $i < \kappa$. Let $d_{i}=\mathrm{d}(a'/Mb_{< i})$; this is defined, because $a$ is finite. Then for $i < j < \kappa$, $d_i < d_j$, which is impossible as $\kappa$ is large. (Because we are assuming the finite language case, we could actually have used a sequence $\{b_{i}\}_{i < \omega}$ rather than a long sequence.)

We next show the generalized freedom axiom for $\ind$. That is, we show that if $M' \ind^{K^{\ind
}}_{M} a$ and $\{a_{i}\}_{i \in I}$ is an $\ind$-Morley sequence starting with $a$ over $M'$, then it is an $\ind$-Morley sequence over $M$. Because $M' \ind^{K^{\ind
}}_{M} a$, $M' \ind^{{d}
}_{M} a$, by the above paragraph; therefore, $M' \ind_{M}^{d} {a_{i}}$ for $i < \omega$. Moreover, $a_{i} \ind_{M'} a_{<i}$ implies $a_{i} \ind^{d}_{M'} a_{<i}$ for $i < \omega$. So by repeated applications of symmetry and full transitivity of $
\ind^{d}$, $M' \ind^{d}_{M} a_{<i}$ as well, for $i < \omega$. To show $\{a_{i}\}_{i \in I}$ is an $\ind$-Morley sequence over $M$, it therefore remains to show the following claim: if $b' \ind_{M'} c$, $M'\ind_{M}^{d} b$, and $M'\ind_{M}^{d} c$, then $b' \ind_{M} c'$. If $A \leq M$ is finite, with $b \ind^{d}_{A} M$ and $c \ind^{d}_{A} M$, then it follows from $M' \ind_{M}^{d} b$ and $M' \ind^{K^{\ind
}}_{M} c$ that $b \ind^{d}_{A} M'$ and $c \ind^{d}_{A} M'$ (transitivity and symmetry). By this observation, our claim that $b' \ind_{M'} c$, $M'\ind_{M}^{d} b$, and $M'\ind_{M}^{d} c$ imply $b' \ind_{M} c'$ follows from the definition of $\ind$. So we have the generalized freedom axiom for $\ind$. We can also carry out a similar proof for a set in place of $M'$ (which we can assume to be algebraically closed and contain $M$), so $\ind^{d}$ implies $\ind^{K^{\ind}}$.

So $\ind^{d}$ coincides with $\ind$-Kim independence, which is then symmetric, and $\ind$ satisfies the generalized freedom axiom, so $\ind$ satisfies the relative Kim's lemma and $\ind^{d}$ coincides with Conant-independence.

\end{example}

\begin{example}
(Random graphs without small cycles). Shelah introduces this example in Claim 2.8.5 of \cite{She95}. Let $n \geq 3$, and consider first the case where $n$ is even. Then the theory of graphs without cycles of length not exceeding $n$ has a model companion $T$, but it is not the model completion. The theory $T$ does have quantifier elimination, however, in the graph language expanded by the definable partial function symbols $F^{k}_{m}$, for $k \leq m$ not more than $\frac{n}{2}$, sending vertices $a$ and $b$ of distance $m$ to the $k$th vertex along the path between $a$ and $b$; note that (particularly to the even case) any two vertices in $T$ have a unique path of length at most $\frac{n}{2}$ between them. (We adopt the convention that paths cannot retrace themselves.) A set $A \subseteq \mathbb{M}$ is then algebraically closed $A$ contains every path in $\mathbb{M}$ of length $\frac{n}{2}$ between every two vertices of $A$. The type of an algebraically closed set is determined by its quantifier-free type. Shelah (\cite{She95}) shows this theory is $\mathrm{NSOP}_{4}$.

We define a stationary relation $\ind$ over models, as follows. Define $a \ind_{M} b$ if $a \ind^{a}_{M} b$, there are no edges between $\mathrm{acl}(Ma)\backslash M$ and $\mathrm{acl}(Mb) \backslash M$, and the algebraic closure of $Mab$ is isomorphic over $\mathrm{acl}(Ma) \cup \mathrm{acl}(Mb)$ to the construction $P^{\infty}(\mathrm{acl}(Ma) \cup \mathrm{acl}(Mb))$ defined below. Note that this construction is free of any choice and thus gives, by quantifier elimination, a relation with invariance and stationarity. For $A$ a graph without cycles of length $\leq n$ let $P(A)$ be the disjoint union $A \sqcup A'=\sqcup_{d_{A}(a, b) > \frac{n}{2}} A'_{ab}$, where $A_{ab}'$ consists of new formal vertices $(a, b, 1), \ldots, (a, b, \frac{n}{2}-1)$ forming a path of length $\frac{n}{2}$ between $a$ and $b$, and the condition $d_{A}(a, b) > \frac{n}{2}$ denotes that $a$ and $b$ are vertices of $A$ admitting no path within $A$ of length $\leq \frac{n}{2}$. The edges of $P(A)$ will consist of those originally in $A$, as well as those making each $A_{ab}$ into the intermediate vertices of a path of length $\frac{n}{2}$ between $a$ and $b$. Note that $P(A)$, like $A$, will continue not to have cycles of length $\leq n$. This is because any cycle $C$ of length $\leq n$ cannot lie inside of $A$, so $A_{ab} \subseteq C$ for some $a, b \in A$ with no paths in $A$ of length $\leq \frac{n}{2}$. Then $A_{ab} \backslash C$ cannot lie in $A$, because otherwise $A_{ab} \backslash C$ would form a path in $A$ between $a$ and $b$ of length $\leq \frac{n}{2}$. So some other $A_{cd}$ must also lie in $C$, contradicting that $|C| \leq n$. Now define inductively $P^{0}(A) = A$, $P^{n}(A)=P(P^{n-1}(A))$, $P^{\infty}(A) = \cup^{\infty}_{n=0} P^{n}(A)$; we have defined $P^{\infty}(Mab)$ used in the definition of $\ind$. Since $T$ is the model companion, $\ind$ has existence, and we have to show monotonicity of $\ind$.

We first show two claims about our construction. For graphs $A \subseteq B$ without cycles of length $\leq n$, call $A$ \textit{closed} in $B$ if any path of length $\leq \frac{n}{2}$ between two vertices of $A$ lies in $B$. Within $\mathbb{M}$ this coincides with $A$ being relatively algebraically closed in $B$. Assume $A \subseteq B$ is closed in $B$. We claim:

(i) Define $P|_{A}(B)= A \sqcup \bigsqcup_{a, b \in A, d_{B}(a, b) > \frac{n}{2}} A_{ab}$ to be the union of $A$ with the new formal paths of $P(B)$ added between vertices of $A$. (Here $d_{B}(a, b) > \frac{n}{2}$ denotes that there is no path within $B$ between $a$ and $b$ of length $\leq \frac{n}{2}$.) Then $P|_{A}(B) = P(A)$.

(ii) $P|_{A}(B) = P(A)$ is closed in $P(B)$.

For (i), the only reason $P|_{A}(B) = P(A)$ can fail is if there are two vertices $a, b \in A$ so that there is no path of length $\leq \frac{n}{2}$ between $a$ and $b$ in $A$, but there is a path of length $\leq \frac{n}{2}$ between $a$ and $b$ in $B$. This contradicts the assumption that $A$ is closed in $B$. For (ii), let $\gamma$ be a path of length $\leq \frac{n}{2}$ in $P(A)$ between vertices $a, b \in P(A)$; we must show $\gamma \subseteq B$. The path $\gamma$ cannot contain one of the new formal paths $\{c, d\} \cup B_{cd}$ of $P(B)$ between $c, d$ not both in $A$, because then $\gamma$ would properly contain $\{c, d\} \cup B_{cd}$, and $\gamma$ would have length $> \frac{n}{2}$. The path $\gamma$ also cannot contain some new points of a new formal path $ B_{cd}$ of $P(B)$ where $c, d$ are not both in $A$, but not all of $\{c, d\} \cup B_{cd}$, because then $\gamma$ would start or end in $B_{cd}$ rather than starting or ending at $a$ or $b$. So every point of $\gamma$ lies either in the formal paths $A_{ab}=B_{ab}$ between $a, b \in A$, so lies in $P(A)$, or belongs to $B$. It follows that $\gamma$ can be decomposed into subpaths in $P(A)$ and subpaths in $B$ between two vertices of $A$. But any path in $B$ of length $\leq \frac{n}{2}$ between two vertices of $A$, will be contained in $A$, because $A$ is closed in $B$.

For graphs $A \subseteq B$ without cycles of length $\leq \frac{n}{2}$, define the \textit{closure} of $A$ in $B$ to be the smallest closed subgraph of $B$ containing $A$; in $\mathbb{M}$ this coincides with the algebraic closure. By induction, the following follows from (i) and (ii):

(*) For $A \subseteq B$, if $A$ is closed in $B$, then $P^{\infty}(A)$ is the closure of $A$ in $P^{\infty}(B)$.

We now show monotonicity of $\ind$. Assume $A \ind_{M} B$, and $A' \subseteq A$, $B' \subseteq B$; we show $A' \ind_{M} B'$. We show that $\mathrm{acl}(MA') \cup \mathrm{acl}(MB')$ is algebraically closed in $\mathrm{acl}(MA) \cup \mathrm{acl}(MB)$. Because $A \ind_{M} B$, there are no edges between $\mathrm{acl}(AM) \backslash M$ and $\mathrm{acl}(BM) \backslash M$. Therefore, any path $\gamma$ in $\mathrm{acl}(MA) \cup \mathrm{acl}(MB)$ with endpoints in $\mathrm{acl}(MA') \cup \mathrm{acl}(MB')$ can be decomposed into subpaths in $\mathrm{acl}(MA)$ with endpoints in $\mathrm{acl}(MA')$ and subpaths in $\mathrm{acl}(MB)$ with endpoints in   $\mathrm{acl}(MB')$. If $\gamma$ is a path of length $\leq \frac{n}{2}$, then these subpaths must be contained in $\mathrm{acl}(MA')$ and $\mathrm{acl}(MB')$, respectively. So $\mathrm{acl}(MA') \cup \mathrm{acl}(MB')$ is closed in $\mathrm{acl}(MA) \cup \mathrm{acl}(MB)$. Now it clearly follows from $A \ind_{M} B$ that $A' \ind_{M}^{a} B'$ and there are no edges between $\mathrm{acl}(MA') \backslash$ and $\mathrm{acl}(MB') \backslash M$. The rest follows from $A \ind_{M} B$, and the observation that $\mathrm{acl}(MA') \cup \mathrm{acl}(MB')$ is closed in $\mathrm{acl}(MA) \cup \mathrm{acl}(MB)$, by (*). 

Now define the relation $A\ind_{M}^{\frac{n}{4}}B$ to mean that $A \ind^{a}_{M} B$, and for $a, b \in \mathrm{acl}(AM) \cup \mathrm{acl}(BM)$, if there is no path of length at most $\frac{n}{4}$ between $a$ and $b$ in the graph $\mathrm{acl}(AM) \cup \mathrm{acl}(BM)$ with no new edges, then $a$ and $b$ are of distance greater than $\frac{n}{4}$ apart. (Note the importance of this distance restriction in Shelah's proof of $\mathrm{NSOP}_{4}$.) We claim that if $M'\ind^{\frac{n}{4}}_{M} a$, then an $\ind$-Morley sequence $\{a_{i}\}$ over $M'$ with $a_{i} \equiv_{M} a$ for $i < \omega$ will remain an $\ind$-Morley sequence over $M$. Clearly, $a_{i} \ind^{a}_{M} a_{<i}$ and there are no edges between $\mathrm{acl}(Ma_{i}) \backslash M$ and $\mathrm{acl}(Ma_{<i}) \backslash M$ for $i < \omega$. We will show that, if $M' \ind_{M}^{\frac{n}{4}} b$, $M' \ind^{\frac{n}{4}}_{M}c$, and $b \ind_{M'} c$, then $M' \ind_{M}^{\frac{n}{4}} bc$, and 

$$\mathrm{acl}(\mathrm{acl}(Mb) \cup \mathrm{acl}(Mc))=P^{\infty}(\mathrm{acl}(Mb) \cup \mathrm{acl}(Mc))$$

$$= P^{\infty}|_{\mathrm{acl}(Mb) \cup \mathrm{acl}(Mc)}(\mathrm{acl}(M'b)) \cup \mathrm{acl}(M'c)) \subseteq \mathrm{acl}(M'bc) = P^{\infty}(\mathrm{acl}(M'b)\cup \mathrm{acl}(M'c))$$

Then it will follow by induction that $\{a_{i}\}_{i < \omega}$ is an $\ind$-Morley sequence over $M$. We claim that $\mathrm{acl}(Mb) \cup \mathrm{acl}(Mc)$ is closed in $\mathrm{acl}(M'b)\cup \mathrm{acl}(M'c)$. Otherwise, some path $\gamma$ of length at most $\frac{n}{2}$ between $\mathrm{acl}(Mb)$ and $\mathrm{acl}(Mc)$ would have to pass through $M' \backslash M$. But $\gamma$ would then be too short not to pass between either $\mathrm{acl}(Mb)$ or $\mathrm{acl}(Mc)$, and $M' \backslash M$, in no greater than $\frac{n}{4}$ steps. Therefore, $\mathrm{acl}(Mb) \cup \mathrm{acl}(Mc)$ is closed in $\mathrm{acl}(M'b)\cup \mathrm{acl}(M'c)$. So by (*) above, $$\mathrm{acl}(\mathrm{acl}(Mb) \cup \mathrm{acl}(Mc))=P^{\infty}(\mathrm{acl}(Mb) \cup \mathrm{acl}(Mc))$$

It remains to show that $M' \ind_{M}^{\frac{n}{4}} bc$. But it can easily be seen that, for 

$$p \in \mathrm{acl}(\mathrm{acl}(Mb) \cup \mathrm{acl}(Mc)) \backslash \mathrm{acl}(Mb) \cup \mathrm{acl}(Mc)=P^{\infty}(\mathrm{acl}(Mb) \cup \mathrm{acl}(Mc)) \backslash \mathrm{acl}(Mb) \cup \mathrm{acl}(Mc) $$

the closest points of $\mathrm{acl}(M'b) \cup \mathrm{acl}(M'c)$ to $p$ within $\mathrm{acl}(M'bc) = P^{\infty}(\mathrm{acl}(M'b)\cup \mathrm{acl}(M'c))$ must be points of $\mathrm{acl}(Mb) \cup \mathrm{acl}(Mc)$. This is by construction of $P^{\infty}(\mathrm{acl}(M'b)\cup \mathrm{acl}(M'c))$. It follows that $M' \ind_{M}^{\frac{n}{4}} bc$.

This shows that $\mathrm{acl}(Mb) \cup \mathrm{acl}(Mc)$. Because the same reasoning works for a set (which can be assumed to be algebraically closed) in place of $M'$, we see that $\ind^{\frac{n}{4}}$ implies $\ind^{K^{\ind}}$.

We show the reverse implication, and thus that $\ind^{K^{\ind}} = \ind^{\frac{n}{4}}$. By the above discussion, this will tell us additionally that $\ind$ satisfies the generalized freedom axiom. By Theorem \ref{1-relativensop1}, it will follow that $\ind$ satisfies the relative Kim's lemma and $\ind^{\frac{n}{4}}$ is Conant-independence. (Of course, this last claim will also follow from the proof below, which shows directly that $\nind^{\frac{n}{4}}$ implies $\nind^{K^{*}}$, and the above fact that $\ind^{\frac{n}{4}}$ implies $\ind^{K^{\ind}}$ and therefore implies $\ind^{K^{*}}$). Suppose $A\ind^{\frac{n}{4}}_{M} B$ is false, and $A \ind^{a}_{M} B$. Then there is a path of length at most $\frac{n}{4}$ not passing through $M$ between a vertex $a$ of $\mathrm{acl}(MA)$ and a vertex $b$ of $\mathrm{acl}(MB)$. Let $\varphi(x, B) \in \mathrm{tp}(a/MB)$ imply that there is such a path. (Note that for a path of length at most $\frac{n}{4}$ not to pass through $M$, it need only avoid the finitely many elements of $M$ within distance $\frac{n}{4}$ of $b$.) Suppose $\{B_{i}\}_{i < \omega}$ is an invariant Morley sequence over $M$ (such as an $\ind$-Morley sequence over $M$) starting with $B$, so that $\{\varphi(x,B_{i})\}_{i < \omega}$ is consistent, realized by some $A'$. This realization $A' \models \{\varphi(x,B_{i})\}_{i < \omega}$ can be chosen so that $\{B_{i}\}_{i < \omega}$ is indiscernible over $MA$, so over $\mathrm{acl}(MA')$. Then $A'$ will have a vertex $a'$ lying on a path of length at most $\frac{n}{2}$ between vertices of $\mathrm{acl}(Mb_{0}) \cup \mathrm{acl}(Mb_{1})$ avoiding $M$. Therefore, $a' \in \mathrm{acl}(Mb_{0}b_{1})\backslash M$. Similarly, $a' \in \mathrm{acl}(MB_{2}B_{3})\backslash M$. But the concatenation $\{B_{2i}B_{2i+1}\}_{i < \omega}$ remains an invariant Morley sequence, so $B_{0}B_{1} \ind_{M}^{a} B_{2}B_{3}$, a contradiction.

Note that $\ind^{\frac{n}{4}}$ does not coincide with $\ind^{a}$, making this another interesting case of Conant-independence. To see this, consider a vertex $a$ of distance $\frac{n}{2}$ from the model $M$ and take some algebraically closed graph (that is, a graph with no two vertices farther than $\frac{n}{2}$ apart) $B \supset M$ containing $M$ and $a$, then take two disjoint copies $B_{1}$ and $B_{2}$ of this graph over $M$, with $a_{1}$ and $a_{2}$ the copies of $a$ over $M$, and no further edges. Then we can add a path of length at most $\frac{n}{4}$ between $a_{1}$ and $a_{2}$ and not create any small cycles. Embed this into a larger model over $M$, and the images of $B_{1}$ and $B_{2}$ will be independent according to $\ind^{a}$ but not $\ind^{\frac{n}{2}
}$.

The case where $n=2m+1$ is odd is different in that, while the quantifier elimination still holds in the language expended by the definable partial function symbols, two vertices can be of length $m+1$ apart and none of the partial function symbols can be defined there, in which case there are infinitely many paths of length $m+1$ between them. So defining $\ind$ is easier: let $A \ind_{M} B$ if $A \ind^{a}_{M} B$, and for two vertices $a, b \in \mathrm{acl}(MA) \cup \mathrm{acl}(MB)$ that are not already of distance at most $m$ apart within $\mathrm{acl}(Ma) \cup \mathrm{acl}(Mb)$ with no new edges, $a$ and $b$ will have distance $m+1$. Then a similar analysis holds.

\end{example}

\section{Conant-independence in the $\mathrm{NSOP}_{n}$ hierarchy}

We prove that symmetry of Conant-independence implies $\mathrm{NSOP}_{4}$. We begin with the following fact, whose proof is essentially that of Proposition 5.2 of \cite{NSOP2}:

\begin{fact}\label{1-coheirs}
If formula $\varphi(x, b)$ Conant-forks over $M$, then if $\{b_{i}\}_{i < \omega}$ is an invariant Morley sequence over $M$ in an $M$-finitely satisfiaable type with $b_{0} = b$, $\{\varphi(x, b_{i})\}_{i < \omega}$ is inconsistent.
\end{fact}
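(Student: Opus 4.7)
The plan is to reduce from Conant-forking to Conant-dividing via a standard pigeonhole argument, following the style of Proposition 5.2 of \cite{NSOP2} and the analogous Kim-forking/Kim-dividing arguments in Proposition \ref{1-relativeforkingdividing}. Suppose $\varphi(x, b)$ Conant-forks over $M$, so there exist formulas $\psi_{1}(x, c_{1}), \ldots, \psi_{n}(x, c_{n})$ Conant-dividing over $M$ with $\models \varphi(x, b) \to \bigvee_{j = 1}^{n} \psi_{j}(x, c_{j})$. First, I would absorb all parameters into a single tuple $b^{*} = bc_{1}\ldots c_{n}$, rewriting $\varphi(x, b)$ as $\varphi^{*}(x, b^{*})$ and each $\psi_{j}(x, c_{j})$ as $\psi_{j}^{*}(x, b^{*})$ (adding dummy variables). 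Adding unused parameters does not affect Conant-dividing: if $\{b_{i}^{*}\}$ is any invariant Morley sequence starting with $b^{*}$, then the projection $\{c_{j, i}\}$ onto the $c_{j}$-coordinates is an invariant Morley sequence over $M$ starting with $c_{j}$, so the inconsistency of $\{\psi_{j}(x, c_{j, i})\}$ witnesses the inconsistency of $\{\psi_{j}^{*}(x, b_{i}^{*})\}$.

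Next, let $\{b_{i}^{*}\}_{i < \omega}$ be an invariant Morley sequence over $M$ in an $M$-finitely satisfiable type, with $b_{0}^{*} = b^{*}$. Suppose for contradiction that $\{\varphi(x, b_{i})\}_{i < \omega}$ is consistent, realized by some $a$. Then $a \models \bigvee_{j} \psi_{j}(x, c_{j, i})$ for every $i < \omega$, so by the infinite pigeonhole principle there is some fixed $j$ and some infinite $S \subseteq \omega$ with $a \models \psi_{j}(x, c_{j, i})$ for all $i \in S$.

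The key observation is that the subsequence $\{b_{i}^{*}\}_{i \in S}$ remains an invariant Morley sequence over $M$ in the same $M$-finitely satisfiable type, and hence its projection $\{c_{j, i}\}_{i \in S}$ is an invariant Morley sequence over $M$ starting with $c_{j}$. Since $\psi_{j}(x, c_{j})$ Conant-divides over $M$, by definition every invariant Morley sequence over $M$ starting with $c_{j}$ witnesses this, so $\{\psi_{j}(x, c_{j, i})\}_{i \in S}$ is inconsistent, contradicting the existence of $a$. The main obstacle, though a routine one, is verifying that subsequences and coordinate projections of invariant Morley sequences remain invariant Morley sequences; both follow immediately from the definition of an invariant Morley sequence in an invariant global type. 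Note that the hypothesis of finite satisfiability is inessential for this argument beyond being preserved under taking subsequences, so the same proof gives the conclusion for every invariant Morley sequence in $b$.
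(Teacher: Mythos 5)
Your overall strategy is the same as the paper's (the standard pigeonhole reduction from forking to dividing, as in Proposition 5.2 of \cite{NSOP2}), and your observations that coordinate projections and subsequences of invariant Morley sequences are again invariant Morley sequences are correct. But there is a genuine gap at the quantifier level. The Fact asserts inconsistency of $\{\varphi(x,b_i)\}$ along an \emph{arbitrary} finitely satisfiable Morley sequence $\{b_i\}$ in the original tuple $b$, whereas your argument only treats sequences $\{b_i^*\}$ in the enlarged tuple $b^* = bc_1\cdots c_n$ and their projections. Projections of finitely satisfiable Morley sequences in types extending $\mathrm{tp}(b^*/M)$ realize only \emph{some} of the possible EM-types of finitely satisfiable Morley sequences starting with $b$; to cover a given sequence, whose type is some global $M$-finitely satisfiable $p \supseteq \mathrm{tp}(b/M)$, you must produce a global $M$-finitely satisfiable $p^*$ containing $\mathrm{tp}(b^*/M)$ whose restriction to the $b$-variables is exactly $p$, and then use that all Morley sequences in $p$ have the same EM-type over $M$. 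This lifting step is precisely left extension for $\ind^{u}$, which is the tool the paper explicitly invokes (``by left extension and monotonicity for $\ind^{u}$\ldots we can assume $c_i = b$'') and which is entirely absent from your write-up. (A cosmetic point in the same vein: your extracted subsequence starts with $c_{j,\min S}$ rather than $c_j$, so an automorphism over $M$ is needed before quoting Conant-dividing.)

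Your closing remark --- that finite satisfiability is ``inessential'' and the same proof gives inconsistency along \emph{every} invariant Morley sequence in $b$ --- is exactly where this gap bites, and it is wrong (or at least far beyond what the argument shows). That claim amounts to saying Conant-forking implies Conant-dividing in an arbitrary theory; but the lifting step above requires left extension for the relevant class of types, which is a classical fact for finitely satisfiable types and is not available for general $M$-invariant types. This is why the paper states the Fact only for finitely satisfiable Morley sequences, and why it derives ``Conant-forking $=$ Conant-dividing'' only under additional hypotheses (the strong witnessing property, or the relative Kim's lemma for a relation $\ind$ with the generalized freedom axiom). Once you add the left extension step for $\ind^{u}$ and drop the final sentence, your proof matches the paper's.
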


\begin{proof}

Let $\models \varphi(x, b)\rightarrow \bigvee_{i = 1}^{n} \psi_{i}(x, c_{i})$ for $\psi_{i}(x, c_{i})$ Conant-dividing over $M$, so in particular Kim-dividing over $M$ by any invariant Morley sequence in an $M$-finitely satisfiable type extending $\mathrm{tp}(c_{i}/M)$. By left extension and monotonicity for $\ind^{u}$ , whether or not a formula $\ind$-Kim-divides over $M$ does not change when adding unused parameters, so we can assume $c_{i} = b$ for $1 \leq i \leq n$. Then $\varphi(x, b)$ Kim-divides over $M$ by any invariant Morley sequence in a finitely satisfiable type over $M$, for suppose otherwise. Let $\{b_{i}\}_{i < \omega}$ be an invariant Morley sequence in a finitely satisfiable type over $M$ starting with $b$; then there will be some $a$ realizing $\{\varphi(x, b_{i})\}_{i < \omega}$. So by the pigeonhole principle, there will be some $1 \leq j \leq n$ so that $a$ realizes $\{\psi_{j}(x, b_{i})\}_{i \in S}$ for $S \subseteq \omega$ infinite. But by monotonicity and an automorphism, we can assume $\{b_{i}\}_{i \in S}$ is an invariant Morley sequence in that same finitely satisfiable type over $M$ starting with $b$, contradicting Conant-dividing of $\psi_{j}(x, b)$.
\end{proof}

The following uses similar Skolemization methods to Proposition 5.6 of Chernikov and Ramsey in \cite{CR15}, which generalize in a surprising way to indiscernible sequences ordered by a definable relation with no $4$-cycles.

\begin{theorem}
Any theory where Conant-forking is symmetric is $\mathrm{NSOP}_{4}$. Thus $n=4$ is the greatest $n$ so that there are strictly $\mathrm{NSOP}_{n}$ theories with symmetric Conant-independence.
\end{theorem}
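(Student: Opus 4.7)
The plan is to prove the contrapositive: assume $T$ is $\mathrm{SOP}_{4}$ and produce a configuration witnessing asymmetry of Conant-forking. Fix a definable relation $R(x_{1}, x_{2})$ with no $4$-cycles together with an infinite $R$-chain; by Ramsey's theorem and compactness, replace this chain by a densely ordered indiscernible sequence $\{a_{i}\}_{i \in \mathbb{Q}}$ satisfying $\models R(a_{i}, a_{j})$ for $i < j$, and fix a Skolemization of $T$.

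Following the Skolem-hull extraction of Proposition 5.6 of \cite{CR15}, as adapted in Claims \ref{1-consistencyinconsistency} and \ref{1-consistencyinconsistency2} above, I would construct inside the Skolem hull of $\{a_{i}\}_{i \in \mathbb{Q}}$ a sequence $\{c_{i, 0}, c_{i, 1}\}_{i \in \mathbb{Q}^{+}}$ (with $\mathbb{Q}^{+} := \mathbb{Q} \cup \{\infty\}$) together with a formula $\varphi(x, \bar y)$ built from $R$ so that:
\begin{enumerate}[(a)]
\item $c_{i, 0} \equiv_{\mathrm{dcl}_{\mathrm{Sk}}(\{c_{j, 0}, c_{j, 1}\}_{j < i})} c_{i, 1}$ for every $i$;
\item $\{\varphi(x, c_{i, 0})\}_{i \in \mathbb{Q}^{+}}$ is consistent;
\item the reverse of $\{c_{i, 1}\}_{i \in \mathbb{Q}^{+}}$ is indiscernible and forms an invariant Morley sequence in an $M$-finitely satisfiable type, where $M := \mathrm{dcl}_{\mathrm{Sk}}(\{c_{i, 0}, c_{i, 1}\}_{i \in \mathbb{Q}})$;
\item $\{\varphi(x, c_{i, 1})\}_{i \in \mathbb{Q}^{+}}$ is inconsistent, with the inconsistency forced by the no-$4$-cycles condition applied to the inherited $R$-structure on the tuples, and in such a way that every invariant Morley sequence in the relevant finitely satisfiable extension of $\mathrm{tp}(c_{\infty, 1}/M)$ yields the same inconsistency.
\end{enumerate}
A natural candidate is to take each $c_{i, j}$ as a pair of elements from the chain and $\varphi(x, y_{1}, y_{2}) := R(y_{1}, x) \wedge R(x, y_{2})$, arranging that two consecutive backward instances, combined with a suitable intermediate Skolem-function witness inside the Skolem hull, complete a forbidden $4$-cycle while the forward reading leaves a consistent $R$-gap for $x$.

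Granted such a configuration, I set $a := c_{\infty, 0} = c_{\infty, 1}$ (after an automorphism) and pick $b$ realizing $\{\varphi(x, c_{i, 0})\}_{i \in \mathbb{Q}^{+}}$ with $\{c_{i, 0}\}_{i \in \mathbb{Q}^{+}}$ additionally indiscernible over $Mb$, via Ramsey and compactness. Then $\mathrm{tp}(b / Ma)$ is finitely satisfiable over $M$, so by the standard argument from Fact \ref{1-coheirs} (extend $\mathrm{tp}(b/Ma)$ to an $M$-finitely satisfiable global type, take a Morley sequence $\{b_{j}\}$, use invariance to see $\varphi(a, b_{j})$ holds for all $j$, and apply Fact \ref{1-coheirs} to any candidate Conant-forking formula in $\mathrm{tp}(a/Mb)$), we obtain $a \ind^{K^{*}}_{M} b$. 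On the other hand, $\varphi(x, a) \in \mathrm{tp}(b / Ma)$, and by (c)--(d) together with a pigeonhole argument as in Proposition \ref{1-relativeforkingdividing}, $\varphi(x, a)$ Conant-forks over $M$, giving $b \nind^{K^{*}}_{M} a$. The resulting asymmetry contradicts the hypothesis.

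The main obstacle, and the ``surprise'' alluded to in the remark preceding the theorem, is point (d): selecting $\varphi$ and the tuples $c_{i, j}$ so that the no-$4$-cycles condition on $R$ translates not merely into Kim-dividing along the reversed chain but into enough Conant-forking to make $b \nind^{K^{*}}_{M} a$, while still preserving consistency along the forward reading. Making this combinatorial translation precise, and verifying that the Skolem-hull extraction respects the $R$-indiscernibility enough to guarantee both (c) and (d) simultaneously, is the bulk of the technical work; this is where a Skolemization technique originally tailored to tree-shaped $\mathrm{SOP}_{1}$ witnesses must be genuinely adapted to linearly ordered indiscernibles whose order comes from a $4$-cycle-free definable relation.
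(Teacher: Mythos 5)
Your overall frame (contrapositive, Skolemized indiscernible chain, and the independent direction obtained from finite satisfiability plus Fact~\ref{1-coheirs}) matches the paper's strategy and is sound, modulo a small slip: in your construction it is $\mathrm{tp}(a/Mb)$, the limit type of $\{c_{i,0}\}_{i \in \mathbb{Q}}$ over $Mb$, that is finitely satisfiable over $M$, not $\mathrm{tp}(b/Ma)$; either hypothesis does yield $a \ind^{K^{*}}_{M} b$. The genuine gap is your item (d), and it is worse than unfinished: it is unachievable by the pair-extraction scheme you propose. That scheme, as in Claims~\ref{1-consistencyinconsistency} and~\ref{1-consistencyinconsistency2}, is engineered to produce a \emph{discrepancy} between two invariant Morley sequences in the same type over $M$ --- one reading consistent, the other inconsistent --- which is precisely a failure of a Kim's-lemma-type statement, not Conant-dividing. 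Indeed your item (b) is self-defeating for (d): the consistency of $\{\varphi(x, c_{i,0})\}_{i \in \mathbb{Q}^{+}}$ hands you an $M$-finitely satisfiable (hence $M$-invariant) extension of $\mathrm{tp}(c_{\infty,0}/M)$, namely the limit type of $\{c_{i,0}\}_{i\in\mathbb{Q}}$, whose Morley sequences do \emph{not} witness dividing of $\varphi(x,a)$; since Conant-dividing demands inconsistency along \emph{every} $M$-invariant Morley sequence, $\varphi(x,a)$ provably fails to Conant-divide in any configuration satisfying (a)--(c), and the step ``by (c)--(d) and pigeonhole as in Proposition~\ref{1-relativeforkingdividing}, $\varphi(x,a)$ Conant-forks'' cannot be carried out. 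Your candidate $\varphi(x,y_1,y_2) = R(y_1,x)\wedge R(x,y_2)$ illustrates the obstruction concretely: closing a forbidden cycle from two instances $R(a_1^i,x)\wedge R(x,a_2^i)$ requires $R$-edges between parameters coming from \emph{distinct} elements of the Morley sequence, and an arbitrary $M$-invariant Morley sequence carries no such edges.

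The missing idea, which is the heart of the paper's proof, is an invariance transport that manufactures those missing edges. The paper drops the $c_{i,0}/c_{i,1}$ pairing entirely and works with a single Skolem-indiscernible chain $\langle c_{i}\rangle_{i<\omega} + \langle a_{1}\rangle + \langle b_{1}\rangle + \langle a_{2}\rangle + \langle b_{2}\rangle + \langle a_{3}\rangle + \langle c_{i}\rangle_{i<\omega^{*}}$, taking $M$ to be the Skolem hull of the two tails, $\overline{a}=a_1a_2a_3$, $\overline{b}=b_1b_2$, and the formula $R(a_1,y_1)\wedge R(y_1,a_2)\wedge R(a_2,y_2)\wedge R(y_2,a_3)$ in $\mathrm{tp}(\overline{b}/M\overline{a})$. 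Given \emph{any} $M$-invariant Morley sequence $\langle \overline{a}^{i}\rangle$ and a common realization $b_1'b_2'$ of two instances, one has $R(a_2^1,b_2')\wedge R(b_2',a_3^0)$ and a witness $b_1'$ to $\exists x\,(R(a_1^0,x)\wedge R(x,a_2^1))$; since $a_1^0\equiv_M a_3^0$ (both lie on the original chain) and the sequence is $M$-invariant, $a_1^0\equiv_{M\overline{a}^1} a_3^0$, so the existential statement transports to a witness $b_1''$ with $R(a_3^0,b_1'')\wedge R(b_1'',a_2^1)$, and $a_2^1, b_2', a_3^0, b_1''$ closes a $4$-cycle. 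This transport is why three $a$'s and two $b$'s are needed, and why inconsistency holds along \emph{all} invariant Morley sequences; no refinement of the Skolem-extraction along your lines substitutes for it.
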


\begin{proof}
Suppose a theory $T$ has $\mathrm{SOP}_{4}$; we show that Conant-independence cannot be symmetric. Let $R(x, y)$ be a definable binary relation with no $4$-cycles, such that there exists an infinite sequence $\langle a_{i}\rangle_{i \in I}$ so that $R(a_{i}, a_{j})$ for $i < j$. Fixing a Skolemization of $T$, we can assume that this sequence is indiscernible in that Skolemization and is of the form $\langle c_{i}\rangle_{i < \omega} + \langle a_{1} \rangle + \langle b_{1} \rangle + \langle a_{2} \rangle + \langle b_{2} \rangle + \langle a_{3} \rangle + \langle c_{i}\rangle_{i < \omega^{*}}$. Let $M = \mathrm{dcl}_{\mathrm{Sk}}(\langle c_{i}\rangle_{i < \omega} + \langle c_{i}\rangle_{i < \omega^{*}})$, $\overline{a} = a_{1}a_{2}a_{3}$, $\overline{b}=b_{1}b_{2}$; we show $\overline{a} \ind^{K^{*}}_{M} \overline{b}$ but $\overline{b} \nind^{K^{*}}_{M} \overline{a}$. For the first part, clearly $\langle c_{i}\rangle_{i < \omega} + \langle a_{1} \rangle + \langle b_{1} \rangle + \langle a_{2} \rangle + \langle b_{2} \rangle + \langle a_{3} \rangle + \langle c_{i}\rangle_{i < \omega^{*}}$ is contained in a sequence, indiscernible in the Skolemization, of the form $\langle c_{i}\rangle_{i < \omega} + \langle a_{1} \rangle + \langle b^{i}_{1} \rangle_{i < \omega} + \langle a_{2} \rangle + \langle b_{2}^{i} \rangle_{i < \omega^{*}} + \langle a_{3} \rangle + \langle c_{i}\rangle_{i < \omega^{*}}$, with $b_{j}^{0} = b_{j}$ for $j = 1, 2$. But $\langle b_{1}^{i}b_{2}^{i} \rangle_{i < \omega}$ is a coheir Morley sequence over $M$ starting with $\overline{b}$ and indiscernible over $M\overline{a}$, so by Fact \ref{1-coheirs} we get $\overline{a} \ind^{K^{*}}_{M} \overline{b}$. For the dependent direction, we show $R(a_{1}, y_{1}) \wedge R(y_{1}, a_{2}) \wedge R(a_{2}, y_{2}) \wedge R(y_{2}, a_{3}) \in \mathrm{tp}(\overline{b}/M\overline{a})$ Conant-divides over $M$. Let $\langle a^{i}_{1} a^{i}_{2} a^{i}_{3}\rangle_{i < \omega}$ be an $M$-invariant Morley sequence starting with $\overline{a}$ and suppose $\{R(a^{i}_{1}, y_{1}) \wedge R(y_{1}, a^{i}_{2}) \wedge R(a^{i}_{2}, y_{2}) \wedge R(y_{2}, a^{i}_{3})\}_{i < \omega}$ were consistent, realized by $b'_{1}b'_{2}$. Then $\models R(a^{1}_{2}, b'_{2}) \wedge R(b'_{2}, a^{0}_{3}) $. Now $\models \exists x R(a^{0}_{1}, x) \wedge R(x, a_{2}^{1}) $, witnessed by $b_{1}'$. But $a_{1}^{0} \equiv_{M} a_{3}^{0}$, so by invariance, $a_{1}^{0} \equiv_{M\overline{a}^{1}} a_{3}^{0}$, and in particular $a_{1}^{0} \equiv_{Ma_{2}^{1}} a_{3}^{0}$. So $\models \exists x R(a^{0}_{3}, x) \wedge R(x, a_{2}^{1}) $, witnessed, say, by $b_{1}''$. But $\models R(a^{1}_{2}, b'_{2}) \wedge R(b'_{2}, a^{0}_{3}) \wedge R(a_{3}^{0}, b''_{1}) \wedge R(b''_{1}, a_{2}^{1}) $, a $4$-cycle, contradiction.
\end{proof}

Thus one of the three main classification-theoretic properties Conant proved for free amalgamation theories in \cite{Co15}--they are either $\mathrm{NSOP}_{1}$ or $\mathrm{SOP}_{3}$, are either simple or $\mathrm{TP}_{2}$, and are $\mathrm{NSOP}_{4}$--holds solely under the assumpton of symmetric Conant-independence. So far, we are only able to prove the other two identities with an additional assumption about invariant types least in the Kim-dividing order (see section $3$; note that Theorem \ref{1-classification} in fact says that this additional assumption about the Kim-dividing order, and not symmetric Conant-independence, is needed to prove $\mathrm{TP}_{2}$ or simple). These assumptions, symmetry for $\ind^{K^{*}}$ and the strong witnessing property, together generalize the free amalgamation theories, and they are related by the generalized freedom axiom using Theorem \ref{1-relativensop1}. Neither assumption is known to fail in any $\mathrm{NSOP}_{4}$ theory. However, it would be desirable if we had a criterion analogous to the theory of independence in simple or $\mathrm{NSOP}_{1}$ theories that gave us all of the classification-theoretic properties of free amalgamation theories. Can we get those other two properties with just symmetry for Conant-independence alone?

\begin{problem}
Must a theory with symmetric Conant-independence be either simple or $\mathrm{TP}_{2}$? Must it be either $\mathrm{NSOP}_{1}$ or $\mathrm{SOP}_{3}$? 
\end{problem}

We are also interested in extending the theory of Kim-independence beyond $\mathrm{NSOP}_{1}$. Given that the class of strictly $\mathrm{NSOP}_{4}$ theories is the most complicated classification-theoretic class where Conant-independence is symmetric, we may ask whether symmetry for Conant-independence characterizes $\mathrm{NSOP}_{4}$ the same way symmetry for Kim-independence characterizes $\mathrm{NSOP}_{1}$.

\begin{problem}
In an $\mathrm{NSOP}_{4}$ theory, is Conant-independence always symmetric?
\end{problem}

A positive answer to both the last problem and one of the two questions from the previous problem will solve some of the open regions of the classification-theoretic hierarchy, further underscoring the connections between classification theory and the theory of model-theoretic independence.

\bibliographystyle{plain}
\bibliography{refs}

\end{document}